\newcommand{\Z}{\Bbb Z}
\newcommand{\C}{\Bbb C}
\newcommand{\g}{  \mathfrak g}
\newcommand{\bea}{\begin{eqnarray}}
\newcommand{\eea}{\end{eqnarray}}
\newtheorem{theorem}{Theorem}[section]
\newtheorem{lemma}[theorem]{Lemma}
\newtheorem{proposition}[theorem]{Proposition}
\newtheorem{definition}[theorem]{Definition}
\newtheorem{remark}[theorem]{Remark}
\newtheorem{corollary}[theorem]{Corollary}
\newtheorem{conj}[theorem]{Conjecture}
\newtheorem*{rep@theorem}{\rep@title}
\newcommand{\newreptheorem}[2]{%
	\newenvironment{rep#1}[1]{%
		\def\rep@title{#2 \ref{##1}}%
		\begin{rep@theorem}}%
		{\end{rep@theorem}}}
\numberwithin{equation}{section}
\newcommand{\Addresses}{{
  \bigskip
  \footnotesize

  D. Adamovi\' c, \textsc{ Department of Mathematics, University of Zagreb,  Croatia}\par\nopagebreak
  \textit{E-mail address:} \texttt{adamovic@math.hr}

  \medskip

  A. Kontrec, \textsc{Department of Mathematics, University of Zagreb,  Croatia}\par\nopagebreak
  \textit{E-mail address:} \texttt{ana.kontrec@math.hr}

}}
\begin{document}

\title[Bershadsky-Polyakov algebra at positive integer levels]{  Bershadsky-Polyakov  vertex algebras    at positive integer levels and duality}

\author{Dra\v zen Adamovi\' c}


\author{Ana Kontrec}

\begin{abstract}

We study the simple   Bershadsky-Polyakov algebra $\mathcal W_k = \mathcal{W}_k(sl_3,f_{\theta})$  at positive integer levels and classify their irreducible modules. In this way we confirm the conjecture from \cite{AK-2019}.
Next, we study the case  $k=1$. We discover that this vertex algebra has a Kazama-Suzuki-type dual isomorphic to the simple affine vertex superalgebra $L_{k'} (osp(1 \vert 2))$ for $k'=-5/4$.
%
   Using  the free-field realization of  $L_{k'} (osp(1 \vert 2))$  from \cite{A-2019}, we get a free-field realization of $\mathcal W_k$ and their highest weight modules.  In a sequel, we plan to study fusion rules for $\mathcal W_k$.
\end{abstract}
\keywords{vertex algebra,  W-algebras,  Bershadsky-Polyakov algebra, Zhu's algebra}
\subjclass[2010]{Primary    17B69; Secondary 17B68}
\date{\today}
\maketitle
  
  \section{Introduction}

  In the recent years,  minimal affine $\mathcal W$ algebras have attracted a lot of interest.   They  are obtained using quantum hamiltonian reduction from affine vertex algebras, and they can be described using generators and relations (cf. \cite{KW, KRW}). 

The Bershadsky-Polyakov algebra $\mathcal W_k = \mathcal{W}_k(sl_3,f_{\theta})$ (\cite{Ber}, \cite{Pol}) is the simplest minimal affine $\mathcal W$--algebra.  T. Arakawa proved in \cite{A1}  that $\mathcal W_k$ is rational for  $k + 3/2 \in {\Z}_{\ge 0}$, while in other cases it is a non-rational vertex algebra. More recently, for $k$ admissible and non-integral, irreducible $\mathcal W_k$--modules were classified in \cite{AK-2019} in some special cases, and in \cite{KRZ} in full generality. A realization of $\mathcal W_k$, when $2k +3 \notin {\Z_{\ge 0}}$, and its relaxed modules is presented in \cite{AKR-2020}, which gives a natural  generalization of the realization of the affine vertex algebra $V_k(sl(2))$ from \cite{A-2019}. Let us now mention certain problems for Bershadsky-Polyakov vertex algebras, which remain unsolved in papers listed above.

\subsection*{ A. Classification of irreducible $\mathcal W_k$--modules for integer levels $k$, $k+ 2 \in {\Z}_{\ge 0}$. }
   In \cite{KRZ}, authors classified irreducible  $\mathcal W_k$--modules for $k$ admissible, non-integral. They showed that every irreducible highest weight module for $\mathcal W_k$ is obtained as an image of the admissible modules for $L_k(sl(3))$ (which are classifed by T. Arakawa in \cite{Araduke}). 
However, when we pass to integral $k$, the methods of \cite{KRZ} are no longer applicable, since in this case quantum hamiltonian reduction sends $L_k(sl(3))$--modules to zero. 
 
 In \cite{AK-2019},  we began the study of the representation theory of $\mathcal W_k$ for integer levels $k$, $k+ 2 \in {\Z}_{\ge 0}$. The starting point was explicit formulas for singular vectors in $\mathcal W^k$ (which generalized those of Arakawa in \cite{A1}).
 We presented a conjecture on the classification of irreducible $\mathcal W_k$--modules for $k+ 2 \in {\Z}_{\ge 0}$, which we proved in cases $k=-1,0$ (and classified all modules in the category $\mathcal{O}$) using explicit realizations of $\mathcal W_k$. One of the main results in this paper is the proof of this conjecture.
 
\subsection*{B. Free-field realization of $\mathcal W_k$ and their modules for $k+ 2 \in {\Z}_{\ge 0}$.}

    In \cite{AKR-2020}, the Bershadsky-Polyakov algebra $\mathcal W_k$ is realized as a vertex subalgebra of $Z_k \otimes \Pi(0)$  (where $Z_k =W_k(sl(3), f_{princ})$ is the Zamolodchikov $W$-algebra \cite{Zam}), for $2 k +3 \notin {\Z}_{\ge 0}$. A realization of $W_k$, when $2k +3 \in {\Z}_{\ge 0}$, requires a different approach. In \cite{AK-2019} we constructed a free-field realization of $\mathcal W_0$, but the cases when $k >0$ were not solved either in \cite{AK-2019} or \cite{AKR-2020}.

In the current  paper, we continue with our study of the Bershadsky-Polyakov algebra $\mathcal W_k = \mathcal{W}_k(sl_3,f_{\theta})$  at positive integer levels $k$   and completely solve problem (A) for $k \ge 0$. We also  partially solve the problem (B) for $k=1$ and find duality relation of $\mathcal W_1$ with the affine vertex superalgebra associated to $osp(1\vert 2)$.  

\subsection*{Classification of irreducible representations}

In \cite{AK-2019}, we found a necessary condition for $\mathcal{W}_k$--modules, parametrizing the highest weights as zeroes of certain polynomial functions (cf. Proposition \ref{klas-1})
\begin{align*} 
h_i(x,y) &= \frac{1}{i}(g(x,y) + g(x+1,y) + ... + g(x+i-1,y)) &\\
&= -i^2+ki-3xi+3i-3x^2-k+2kx+6x+ky+3y-2,&
\end{align*} 
and conjectured that this provides the complete list of irreducible modules for the Bershadsky-Polyakov algebra $\mathcal W_k$ when $k \in \mathbb{Z}$, $k \geq -1$. This conjecture was proved in cases $k=-1$ and $k=0$ in \cite{AK-2019}, using explicit realizations of $\mathcal W_{-1}$ and $\mathcal W_0$ as the Heisenberg vertex algebra and a subalgebra of lattice vertex algebra, respectively. In this paper, we prove this conjecture for $k \in\mathbb{Z}$, $k \geq 1$, thus obtaining a classification of irreducible modules for $\mathcal W_k$ at positive integer levels.
 
 \begin{theorem} \label{klas}
The set   $ \{ L(x,y) \ \vert  \ (x,y) \in  \mathcal S_k \} $	 is the set of all irreducible ordinary $\mathcal W_k$--modules, where $$  \mathcal S_k= \left\{  (x,y)  \in {\C} ^ 2 \ \vert  \exists i , \ 1 \leq i \leq k+2, \  h_i(x,y) = 0  \right\}. $$
\end{theorem}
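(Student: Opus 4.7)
My strategy uses Zhu's algebra $A(\mathcal{W}_k)$: by Zhu's theory, irreducible ordinary $\mathcal{W}_k$-modules correspond bijectively to irreducible modules of $A(\mathcal{W}_k)$. At the universal level, $A(\mathcal{W}^k) \cong \mathbb{C}[x,y]$ is a commutative polynomial algebra, with $x$ and $y$ encoding the $J_0$- and $L_0$-eigenvalues on a highest-weight vector. Passing to the simple quotient, $A(\mathcal{W}_k) = \mathbb{C}[x,y]/I$ for some ideal $I$ containing the images of those singular vectors of $\mathcal{W}^k$ that lie in the maximal proper ideal. Proposition \ref{klas-1} already supplies the inclusion $V(I) \subseteq \mathcal{S}_k$, so the task reduces to the reverse inclusion: for each $(x,y) \in \mathcal{S}_k$, produce an honest irreducible $\mathcal{W}_k$-module of that highest weight.

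I would construct these modules by two complementary methods. For $k=1$, I would exploit the Kazama-Suzuki-type duality with the simple affine vertex superalgebra $L_{-5/4}(osp(1 \vert 2))$ announced in the abstract, combined with the free-field realization of the latter from \cite{A-2019}: each irreducible $L_{-5/4}(osp(1 \vert 2))$-module transports across the duality to an irreducible $\mathcal{W}_1$-module, and tracking highest weights through the correspondence verifies that the images exhaust $\mathcal{S}_1$. For general $k \geq 1$, I would construct $L(x,y)$ as the irreducible quotient of a Verma-type module $M(x,y)$ over $\mathcal{W}_k$. The existence of $M(x,y)$ as a genuine $\mathcal{W}_k$-module (and not merely a $\mathcal{W}^k$-module) reduces to checking that the singular vectors from \cite{AK-2019} generating the maximal ideal of $\mathcal{W}^k$ act by zero on the highest-weight vector; by the factorization of their Zhu-images in $A(\mathcal{W}^k) = \mathbb{C}[x,y]$ as a constant multiple of $\prod_{i=1}^{k+2} h_i(x,y)$, this becomes the direct evaluation of an explicit polynomial at points of $\mathcal{S}_k$.

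The main obstacle is ruling out extra relations coming from higher-order singular vectors in $\mathcal{W}^k$ that could, a priori, further cut down $V(I)$. In the cases $k=-1,0$ treated in \cite{AK-2019}, explicit free-field realizations bypassed this issue entirely. For $k \geq 1$, the crux is to show that the singular vectors of lowest conformal weight, whose images factor into the polynomials $h_i$, already generate the maximal ideal of $\mathcal{W}^k$ at the level of Zhu's algebra --- equivalently, that every irreducible $A(\mathcal{W}_k)$-module sits on $\mathcal{S}_k$ without further constraint. This can be done either by showing that the maximal ideal of $\mathcal{W}^k$ is singly generated in the relevant bidegree (so no further Zhu-relations appear), or by confronting the list of modules produced above with an upper bound on the number of simple $A(\mathcal{W}_k)$-modules and verifying equality pointwise. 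Once $V(I) = \mathcal{S}_k$ is established, Zhu's theorem immediately yields the claimed classification.
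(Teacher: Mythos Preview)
Your proposal correctly identifies the structure of the problem via Zhu's algebra and correctly notes that Proposition~\ref{klas-1} gives one inclusion, so the task is to construct $L(x,y)$ as an actual $\mathcal{W}_k$--module for each $(x,y)\in\mathcal{S}_k$. But the core of your argument has a genuine gap. You propose to verify that the known singular vectors from \cite{AK-2019} act by zero on the highest-weight vector of $M(x,y)$, and you correctly flag that this only works if those singular vectors already generate the maximal ideal of $\mathcal{W}^k$ at the level of Zhu's algebra. Your two suggested remedies do not resolve this: proving the maximal ideal is singly generated is precisely the hard structural fact one does not have in general, and the ``upper bound on the number of simple $A(\mathcal{W}_k)$--modules'' idea cannot work as stated because the modules are parametrized by an infinite family of curves, not a finite set, so there is no finite count to match. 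As written, the argument is circular: it presupposes exactly the control over $I$ that is in question.

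The paper's proof bypasses this obstacle entirely, and the mechanism is the main new idea. One never determines the ideal $I$ explicitly or proves it is generated by the known singular vectors. Instead, one uses only that $\mathbb{C}[x,y]$ is Noetherian, so $I=(P_1,\dots,P_\ell)$ for \emph{some} finite list of polynomials. The spectral flow automorphism $\Psi$ applied to $\mathcal{W}_k$ itself (and later to carefully chosen modules $L(x^i,y^i)$) produces, for each curve $h_i(x,y)=0$, an \emph{infinite} sequence of points $(x^i_{2n},y^i_{2n})$ on that curve which are highest weights of genuine $\mathcal{W}_k$--modules. Restricting each $P_j$ to the curve $h_i=0$ gives a polynomial in one variable with infinitely many zeros, hence identically zero; thus every $P_j$ vanishes on all of $\mathcal{S}_k$, and $L(x,y)$ is a $\mathcal{W}_k$--module for every $(x,y)\in\mathcal{S}_k$. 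This ``algebraic continuation'' via spectral flow is the missing ingredient in your proposal. The Kazama--Suzuki duality you invoke for $k=1$ does appear in the paper, but as a separate explicit realization, not as the proof of the classification; the classification argument in Section~\ref{integer} is uniform in $k$.
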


In order to prove Theorem \ref{klas}, we need to show that   $L(x,y)$, $(x,y) \in  \mathcal S_k$,  are indeed $\mathcal W_k$--modules. Idea of the proof is to  construct an infinite family of irreducible $\mathcal W_k$--modules $L(x,y)$ such that  $h_i(x,y) = 0$ for arbitrary $1 \le i \le k+2$, using  spectral flow construction of  $\mathcal W_k$--modules (cf. Section \ref{integer}). 

\begin{itemize}
\item We first consider a family of simple-current $\mathcal W_k$--modules
$\Psi^n (\mathcal W_k)$, $n \in {\Z}_{\ge 0}$. 
\item  We show that they are highest  weight $\mathcal W_k$--modules satisfying
$$ \Psi^n (\mathcal W_k) = L(x_{n}, y_{n}) \quad \text{and} \quad h_1 (x_{2n}, y_{2n}) =h_{k+2} (x_{2n+1}, y_{2n+1})=   0. $$
\item Using a certain version of algebraic continuation (based on the fact that highest weights of  modules for Zhu's algebra must be  zeros of finitely  many curves in ${\C}^2$), we conclude that
$L(x,y)$ are $\mathcal W_k$--modules whenever $h_1(x,y) =0$ or $h_{k+2}(x,y) = 0$.

\item Next, for every $2 \le i \le k+1$, we find special points $(x^i , y^i)$ such that 
$h_{i}(x^i,y^i) = h_{k+2}(x^i,y^i) = 0$, and again apply the spectral-flow automorphism $\Psi^n$. In this way  we are able to construct infinitely many  highest weight $\mathcal W_k$--modules  $L(x^i_{2n}, y^i_{2n})$ such that $h_i(x^i_{2n}, y^i_{2n}) =0$.

\item Again using the algebraic continuation, we conclude that $L(x,y)$ are $\mathcal W_k$--modules for each point of the curve $h_i(x,y) =0$.

\end{itemize}


\subsection*{ Realization of $\mathcal W_1$ and duality with $L_{-5/4}(osp(1 \vert 2))$}

Next, we give an in-depth study of the case $k=1$. First we show that the Bershadsky-Polyakov vertex algebra $\mathcal W_1$  can be embedded into the tensor product of the affine vertex {super}algebra $L_{k'} (osp(1 \vert 2))$ at level $k'=-5/4$ and the Clifford vertex {super}algebra $F$.
The affine vertex algebra  $V^{k}(osp(1 \vert 2)) $ associated to the Lie superalgebra $osp(1,2)$ was realized by the first named author in \cite{A-2019}. Using this result, and the fact that at level $k'=-5/4$ there is a conformal embedding of $L_{k'}(sl(2))$ into $L_{k'}(osp(1 \vert 2))$ (cf.  \cite{AKMPP-2020}, \cite[Section 10]{CR}), we obtain a realization of the Bershadsky-Polyakov algebra $\mathcal W_1$ (cf. Theorem \ref{real-1}).

Let $F_{-1}$ be the  lattice   vertex {super}algebra associated to the negative definite lattice ${\Bbb Z}\sqrt{-1}$. We show that the simple affine vertex {super}algebra $L_{-5/4} (osp(1 \vert 2))$ can be realized as a subalgebra of $\mathcal W_1 \otimes F_{-1}$ (cf. Theorem \ref{duality}). Moreover, there is a duality   between $\mathcal W_1$ and the affine vertex {super}algebra $L_{k'} (osp(1 \vert 2))$ for $k'=-5/4$, {  in the sense that:
 \bea \mathcal W_1 &=& \mbox{Com} \left(M_{h^{\perp}}(1),  L_{k'} (osp(1 \vert 2)) \otimes F\right), \nonumber \\
  L_{k'} (osp(1 \vert 2)) &=& \mbox{Com} \left( M_{\overline h} (1),  \mathcal W_1 \otimes F_{-1}\right),
  \nonumber \eea
  where $M_{h^{\perp}}(1)$ and $M_{\overline h} (1)$ are Heisenberg vertex algebras defined in Section \ref{duality-section}. }
 
 In \cite{A-2019} it was proved that $L_{-5/4}(osp(1 , 2))$ can be realized on the vertex {super}algebra $F^{1/2} \otimes \Pi^{1/2}(0)$, where $\Pi^{1/2}(0)$ is a lattice type vertex algebra, and $F^{1/2}$ is a Clifford vertex {super}algebra (cf. Subsection \ref{clifford-pola}). Using the fact that all irreducible $L_{-5/4}(osp(1 \vert 2))$-modules can be constructed in this way,  we can construct an explicit realization of irreducible $\mathcal W_1$-modules.

\subsection*{Consequences of duality and future work}

 The notion of Kazama-Suzuki dual was first introduced in the context of the duality of the $N=2$ superconformal algebra and affine Lie algebra $\widehat{\frak sl}(2)$ (cf.  \cite{FST, A-IMRN, A-2001}). Later it was shown that analogous duality relations hold for some other affine vertex algebras and $\mathcal W$--algebras (cf. \cite{A-2007, A-CEJM, CGN, CMY}). Our result shows that  $L_{k'} (osp(1 \vert 2))$ is the Kazama-Suzuki dual of $\mathcal W_1$. Relaxed modules for $L_{k'}(osp(1 \vert 2))$ are mapped  to the ordinary $\mathcal W_1$--modules, for which  one expects it is easier to obtain the tensor category structure and calculate the fusion rules.  Recent results \cite{AP-2019, CMY} show compelling evidence that fusion rules and (vertex) tensor category structure can be transferred onto duals. We expect that the duality between $L_{k'} (osp(1 \vert 2))$ and $\mathcal W_1$ could be used to study fusion rules in the category of relaxed modules for $L_{k'}(osp(1 \vert 2))$ (conjectured in \cite{SRW}).



{
\subsection*{ Setup} 
\begin{itemize}
\item  The universal Bershadsky-Polyakov algebra of level $k$  will be denoted with $\mathcal{W}^k(sl_3,f_{\theta})$ or $\mathcal W^k$, and its unique simple quotient with $\mathcal{W}_k(sl_3,f_{\theta})$ or  $\mathcal W_k$.  The spectral flow automorphism of $\mathcal W_k$ is denoted by $\Psi$.
\item The Zhu algebra associated to the vertex operator algebra $V$ with the Virasoro vector $\omega$  will be denoted with $A_{\omega}(V)$. 
\item The Smith algebra corresponding to the polynomial $g(x,y) \in \mathbb{C}[x,y]$ is denoted with $R(g)$.
\item $F_{-1}$ is the lattice vertex superalgebra  associated to the lattice ${\Z} \sqrt{-1}$ defined in Section \ref{F_{-1}}.
\item $F^{1/2}$ is the Clifford vertex superalgebra, also called the   free fermion algebra (cf. Section \ref{clifford-pola}). It has an automorphism $\sigma_{F^{1/2}}$ of order two which is lifted from the automorphism $\Phi(r) \mapsto - \Phi(r)$ of the Clifford algebra $Cl^{1/2}$. The $\sigma_{F^{1/2}}$--twiseted $F^{1/2}$--modules are denoted by $M_{F} ^{\pm}$.
\item $F$ is the Clifford vertex superalgebra, also called the charged fermion algebra or the  $bc$ system (cf. Section \ref{clifford-cijeli}). It has an automorphism $\sigma_{F}$ of order two which is lifted from the automorphism $\Psi^+(r) \mapsto -  \Psi^+(r)$, $\Psi^-(r) \mapsto -  \Psi^-(r)$ of  the Clifford algebra $Cl$.  The $\sigma_F$--twisted
$F$ module is denoted by $M_{F} ^{tw}$.
\item $L_k(osp(1|2))$ is the simple affine vertex superalgebra associated to the Lie superalgebra $osp(1|2)$ at level $k$. The spectral flow automorphism of $L_k(osp(1|2))$ is denoted by $\rho$.

\end{itemize}}

\section{ Preliminaries} \label{clif} 
In this section we review certain properties of Clifford vertex superalgebras (cf. \cite{FRW}, \cite{FFR}) and a construction of twisted modules for vertex superalgebras by H. Li (cf. \cite{Li3}). Twisted modules for Clifford vertex superalgebras (cf. \cite{FRW}) will play a key role in the realization of the Bershadsky-Polyakov algebra $\mathcal{W}_1$.

\subsection{Twisted modules for vertex superalgebras}
 
Let $V = V_{\overline 0} \oplus V_{\overline 1}$ be a vertex  superalgebra (cf. \cite{FFR}, \cite{X}), {with the vertex operator structure given by $$ Y: V \longrightarrow (\text{End} \, V)[[z,z^{-1}]], \quad Y(v,z)= \sum _{n \in \mathbb{Z}} v_{n}z^{-n-1},$$ for $v \in V$, $v_n \in \text{End} \, V$.}  Then any element in $V_{\overline 0}$ (resp. $V_{\overline 1}$) is said to be even (resp. odd). For any homogenuous element $u$, we define $|u| = 0$ if $u \in V_{\overline 0}$ and $|u| = 1$ if $u \in V_{\overline 1}$.
 
We say that a linear automorphism $\sigma:V \rightarrow V$ is a \textit{vertex superalgebra automorphism} if it holds that
	$$ \sigma Y(v,z) \sigma^{-1} = Y(\sigma v, z)$$ for $v \in V$. Then $\sigma  V_{\alpha} \subset  V_{\alpha}$ for $\alpha \in \left \{ \overline{0}, \overline{1} \right \}$.

\medskip

Let $V$ be a vertex  superalgebra and $\sigma$ an automorphism of $V$ with period $k \in \mathbb{Z}_{\geq 0}$ (that is, $\sigma^k = 1$). Let us now recall a construction of $\sigma$-twisted $V$-modules (cf. \cite{Li3}). 


Let $h \in V$ be an even element such that
\begin{equation} \label{delta_1}
 L(n)h = \delta_{n,0}h, \: h_n h= \delta_{n,1} \gamma \mathbbm{1} \text{ for } n \in \mathbb{Z}_{>0},
\end{equation} for fixed $\gamma \in \mathbb{Q}$. Assume that $h_0$ acts semisimply on  $V$ with rational eigenvalues. It follows that $h_n$ satisfy:
$$ [L(m),h_n] = -n h_{m+n}, \quad [h_m,h_n] = m\gamma \delta_{m+n,0},$$ for $m, n \in \mathbb{Z}. $

Set $$ \Delta (h,z) = z^{h_0}exp \left ( \sum _{k=1} ^{\infty} \frac{h_k}{-k} (-z)^{-k}\right ). $$ Note that $e^{2 \pi ih_0}$ is an automorphism of $V$. Set $\sigma_h = e^{2 \pi ih_0}$ and assume that $\sigma_h$ is of finite order. The following was proved in  \cite{Li3}:

\begin{proposition}[\cite{Li3}] \label{li-twisted}
Let  $V$ be a vertex superalgebra and let $h \in V$ be an even element such that (\ref{delta_1})
holds and $h_0$ acts on $V$ with rational eigenvalues. Let $(M, Y_M(\cdot, z))$ be a $V$-module. Then $(M, Y_M(\Delta(h,z)\cdot, z))$ carries the structure of a $\sigma_h$-twisted $V$-module.
\end{proposition}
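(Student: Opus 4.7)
The plan is to verify that the operator $\tilde{Y}(v, z) := Y_M(\Delta(h, z) v, z)$ equips $M$ with the structure of a $\sigma_h$-twisted $V$-module by checking three axioms in turn: the twisted truncation/monodromy condition, the vacuum and $L(-1)$-translation axioms, and the twisted Jacobi identity.

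For the monodromy, decompose $V = \bigoplus_{\lambda} V^{\lambda}$ under the semisimple action of $h_0$. Since $\sigma_h = e^{2\pi i h_0}$ has finite order $N$, all relevant $\lambda$ lie in $\tfrac{1}{N}\Z$. For $v \in V^{\lambda}$, the factor $z^{h_0}$ in $\Delta(h, z)$ contributes $z^{\lambda}$, while the exponential part produces only integer powers of $z$ and terminates on each generalized weight space by the lower-truncation property of $V$. Hence $\tilde{Y}(v, z) \in (\operatorname{End} M)[[z^{1/N}, z^{-1/N}]]\, z^{\lambda}$, with the correct monodromy $\tilde{Y}(v, e^{2\pi i} z) = \tilde{Y}(\sigma_h v, z)$. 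The vacuum axiom is immediate from $\Delta(h, z)\mathbf{1} = \mathbf{1}$, and the translation axiom reduces to the identity $[L(-1), \Delta(h, z)] = \tfrac{d}{dz}\Delta(h, z)$, which one checks using $[L(m), h_n] = -n\, h_{m+n}$ together with the derivative of the exponential factor.

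The technical heart of the proof is the twisted Jacobi identity. The plan is to derive it from the untwisted Jacobi identity on $(M, Y_M)$ by conjugating vertex operators with $\Delta(h, z)$. The key input is the mode-level commutator
$$[h_n, Y(u, z_0)] = \sum_{j \geq 0} \binom{n}{j} z_0^{n-j}\, Y(h_j u, z_0),$$
which, when exponentiated through the series defining $\Delta(h, z)$, produces a conjugation formula of the schematic form $\Delta(h, z_2) Y(u, z_0) \Delta(h, z_2)^{-1} = Y\!\bigl(\Delta(h, z_0 + z_2)\Delta(h, z_2)^{-1} u,\, z_0\bigr)$. Substituting this into the ordinary Jacobi identity applied to $Y_M(\Delta(h, z_1) u, z_1)$ and $Y_M(\Delta(h, z_2) v, z_2)$, and expanding the factor $((z_1 - z_0)/z_2)^{h_0}$ via the generalized binomial theorem, converts the integer-power delta $z_0^{-1}\delta((z_1 - z_2)/z_0)$ into the fractional-power twisted delta $z_2^{-1}((z_1 - z_0)/z_2)^{\lambda}\delta((z_1 - z_0)/z_2)$ on the right-hand side of the twisted Jacobi identity, with $\lambda$ the $h_0$-eigenvalue of $u$.

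The main obstacle I anticipate is the formal-calculus bookkeeping in this last step: identifying the precise conjugation identity (as an equality of formal series in several variables in the appropriate expansion domain), and then carefully matching the binomial expansions of the fractional powers of $(z_1 - z_0)/z_2$ appearing inside the conjugated vertex operator against those appearing inside the twisted delta function, so that all spurious non-integer contributions cancel and exactly the twisted Jacobi right-hand side remains.
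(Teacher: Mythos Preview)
The paper does not prove this proposition; it is quoted verbatim from \cite{Li3} and used as a black box, so there is no ``paper's own proof'' to compare against. Your outline is essentially the argument Li gives in that reference: one checks the twisted monodromy and truncation from the $z^{h_0}$ factor, the vacuum and translation axioms from $\Delta(h,z)\mathbf{1}=\mathbf{1}$ and $[L(-1),\Delta(h,z)]=\tfrac{d}{dz}\Delta(h,z)$, and then deduces the twisted Jacobi identity from the untwisted one via a conjugation formula of the type
\[
\Delta(h,z_2)\,Y(u,z_0)\;=\;Y\bigl(\Delta(h,z_2+z_0)u,\,z_0\bigr)\,\Delta(h,z_2),
\]
together with the expansion of $\bigl((z_1-z_0)/z_2\bigr)^{h_0}$ that converts the ordinary delta function into the twisted one. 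One small caution: $\Delta(h,z)$ is an operator on $V$ (not on $M$), so your ``conjugation'' should be read as rewriting the iterate $\Delta(h,z_2)Y(u,z_0)v$ rather than as a literal conjugation of operators on $M$; with that understood, the obstacle you flag is exactly the computation Li carries out, and your plan is sound.
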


\subsection{Clifford vertex superalgebra $F^{1/2}$ and its twisted modules}
\label{clifford-pola}
 
Let  $\textit{Cl}^{1/2}$ be the Clifford algebra   with generators 
 $\Phi  (r),  \ \ r \in \tfrac{1}{2} + {\Z}$  and commutation relations
$$\{\Phi (r), \Phi (s)  \} = \delta_{r+s,0}, \quad r,s \in \tfrac{1}{2} + {\Z}. $$ 

The fields $$\Phi(z)= \sum _{n \in  \mathbb{Z}} \Phi(n+ \tfrac{1}{2}) z^{-n-1}$$ generate on $$F^{1/2}   = \bigwedge \left( \Phi (-n-1/2)\vert  \ n  \in {\Z}_{\geq 0}  \right)$$ a unique structure of a vertex  superalgebra with conformal vector $$ \omega_{F^{1/2}} = \tfrac{1}{2} \Phi(-\tfrac{3}{2}) \Phi(-\tfrac{1}{2}) \mathbbm{1}, $$  of central charge $c_{F^{1/2}}=1/2$ (cf. \cite{FB}, \cite{K1}). { Note also that the field $\Phi(z)$ is usually called neutral fermion field, and $F^{1/2}$ is called free-fermion theory in physics literature.}

A basis of $F^{1/2}$ is given by $$ \Phi(-n_1 - \tfrac{1}{2})\dots \Phi(-n_r - \tfrac{1}{2}),$$ where  $n_1>\dots >n_r \geq 0$.

The vertex superalgebra $F^{1/2}$ has the automorphism $\sigma_{F^{1/2}}$ of
 order two which is lifted from the automorphism $\Phi(r) \mapsto - \Phi(r)$ of  the Clifford algebra. The fixed points of this automorphism is the Virasoro vertex algebra $L^{Vir}(\tfrac{1}{2}, 0)$. Moreover,
 $F^{1/2} = L^{Vir}(\tfrac{1}{2}, 0) \oplus L^{Vir}(\tfrac{1}{2}, \frac{1}{2})$.


We briefly recall the properties of twisted modules for Clifford vertex superalgebras, while details can be found in \cite{FRW}.

Define the twisted Clifford algebra $\textit{Cl}^{1/2}_{tw}$ generated by
 $\Phi  (m),  \ \ m \in  {\Z}$  and relations
$$\{\Phi (m), \Phi (n)  \} = \delta_{m+n,0}, \quad m,n \in  {\Z}. $$  
Let  $$M^{\pm}_{F^{1/2}} = \bigoplus_{n=0}^{\infty}M^{\pm}_{F^{1/2}}(n)  $$   be the two irreducible modules for the Clifford  algebra $\textit{Cl}^{1/2}_{tw}$, such that  $\Phi(0)$ acts on the one-dimensional top component $M^{\pm}_{F^{1/2}}(0) $ as $\pm \frac{1}{\sqrt{2}}\text{Id}$. 
 
Let $$ \Phi^{tw}(z)=  \sum _{m \in  \mathbb{Z}} \Phi(m) z^{-m-1/2}, $$ and $$ Y(\Phi(-n_1 - \tfrac{1}{2})\dots \Phi(-n_r - \tfrac{1}{2}),z) = : \partial_{n_1}\Phi^{tw}(z_1)\dots\partial_{n_r}\Phi^{tw}(z_r):,  $$ and extend by linearity to all of $F^{1/2}$.

Define the twisted operator $$Y_{F^{1/2}}^{tw}(v,z) := Y(e^{\Delta_z} v,z),$$ where $$\Delta_z = \frac{1}{2} \sum_{m,n \in \mathbb{Z}_{\geq 0}}C_{m,n}\Phi(m+\tfrac{1}{2})\Phi(n+\tfrac{1}{2})z^{-m-n-1},$$ and $$C_{m,n} = \frac{1}{2} \frac{m-n}{m+n+1} \binom{-1/2}{m} \binom{-1/2}{n}.$$ It holds that (cf. \cite{FFR}, \cite{FRW}) $$e^{\Delta_z}\omega_{F^{1/2}} = \omega_{F^{1/2}} + \tfrac{1}{16}z^{-2}\mathbbm{1}.$$

Then $(M^{\pm}_{F^{1/2}}, Y_{F^{1/2}}^{tw})$ has the structure of a $\sigma_{F^{1/2}}$-twisted module for the vertex superalgebra $F^{1/2}$.

Recall  also that  as a  $L^{Vir}(\tfrac{1}{2}, 0)$--module, we have (cf. \cite{FRW})  
$$M^{\pm}_{F^{1/2}}  \cong L^{Vir}(\tfrac{1}{2}, \tfrac{1}{16}). $$



\subsection{Clifford vertex superalgebra $F$ and its twisted modules}
\label{clifford-cijeli}
 
 Consider the Clifford algebra $\textit{Cl}$  with generators 
 $\Psi^{\pm}  (r),  \ \ r \in \tfrac{1}{2} + {\Z}$  and relations
$$\{\Psi^{+} (r), \Psi^{-} (s)  \} = \delta_{r+s,0}, \quad \{\Psi^{\pm} (r), \Psi^{\pm} (s)  \} = 0, \quad r,s \in \tfrac{1}{2} + {\Z}.$$ 

The fields $$\Psi^{\pm}(z)= \sum _{n \in  \mathbb{Z}} \Psi^{\pm}(n + \tfrac{1}{2}) z^{-n-1}$$ generate on $$F   = \bigwedge \left( \Psi^{\pm} (-n -1/2) \vert   \ n  \in {\Z}_{>0}  \right)$$ a unique structure of a simple vertex superalgebra.  {This vertex algebra is sometimes called $b c$--system.}

Let  $\alpha = :\Psi^+ \Psi^- :$. Then $$ \omega_F = \frac{1}{2} :\alpha \alpha:  $$ is a conformal vector for $F$ of central charge $c_F=1$.

A basis of $F$ is given by $$ \Psi^+(-n_1 - \tfrac{1}{2})\dots \Psi^+(-n_r - \tfrac{1}{2}) \Psi^-(-k_1 - \tfrac{1}{2})\dots \Psi^-(-k_s - \tfrac{1}{2}),$$ where $n_i, k_i \in \mathbb{Z}_{\geq 0}$, $n_1>\dots >n_r $, $k_1>\dots >n_s $.

\medskip

The vertex superalgebra $F$ has an automorphism $\sigma_{F}$ of
 order two which is lifted from the automorphism $\Psi^+(r) \mapsto -  \Psi^+(r)$, $\Psi^-(r) \mapsto -  \Psi^-(r)$ of  the Clifford algebra.
 




 Let   $(M_F ^{tw}, Y^{tw} (\cdot, z))$ so that $M_F ^{tw} = F$ as a vector space, and the vertex operator is
defined by 
 $$Y^{tw}_F(v,z)= Y(\Delta(\alpha/2 ,z) v,z).$$
 
By Proposition \ref{li-twisted} we have that $(M^{tw}_{F}, Y^{tw}_F)$ has the structure of a $\sigma_{F}$-twisted module for the vertex superalgebra $F$.



\subsection{ Lattice vertex superalgebras $F_{-1}$} \label{F_{-1}}

Consider rank one lattice $L= {\Z}\varphi$, $\langle \varphi, \varphi \rangle = -1$. Let $F_{-1}$be the associated vertex algebra. This vertex superalgebra is used for a construction of the inverse of Kazama-Suzuki functor in the context of duality between affine $\widehat{sl}(2)$ and $N=2$ superconformal algebra (cf.  \cite{FST}, \cite{A-IMRN}, \cite{A-2001}). 

As a vector space $F_{-1} = {\C}[L] \otimes M_{\varphi} (1)$, where  ${\C}[L]$ is a group algebra of $L$, and $M_{\varphi} (1)$ the Heisenberg vertex algebra generated by the Heisenberg field $\varphi(z) = \sum_{n \in {\Z} } \varphi(n) z^{-n-1}$ such that
$$ [\varphi(n), \varphi(m)] =   -n \delta_{n+m, 0}. $$

$F_{-1}$ is generated by $e^{\pm \varphi}$. We shall need the relations
\bea e^{\pm \varphi}_n  e^{\pm \varphi} &=&  0 \quad \mbox{for} \ n \ge 1, \nonumber \\
e^{\pm \varphi}_{-m}   e^{\pm \varphi} &=&  S_m(\pm \varphi) e^{2 \varphi}  \quad \mbox{for} \ m \ge 0, \nonumber \\
e^{  \varphi}_n  e^{- \varphi} &=&  0 \quad  \mbox{for} \ n \ge -1, \nonumber \\
e^{ \varphi}_{-m-2}   e^{ \varphi} & =& S_m(\varphi)  \quad   \mbox{for} \ m \ge 0, \nonumber 
\eea
where $ S_m(\varphi):=S_m(\varphi(-1), \varphi(-2), \cdots )$ is the $m$-th Schur polynomial in variables $\varphi(-1), \varphi(-2), \cdots$.

\subsection{Kazama-Suzuki duality}
In this subsection, we will define a duality of vertex algebras which is motivated by the duality between $N=2$ superconformal vertex algebra and affine vertex algebra $L_k(sl(2))$.

{
Recall first  that if $S$ is a vertex subalgebra of $V$, we have the commutant subalgebra of $V$ (cf. \cite{LL}):  $$\mbox{Com} (S, V):= \{ v \in V \ \vert \ a_n v = 0,  \ \forall a \in S, \ \forall n \in {\Z}_{\ge 0}\}. $$

Assume that $U, V$ are vertex superalgebras. We say that $V$ is the Kazama-Suzuki dual of $U$ if there exist injective homomorphisms of vertex superalgebras
$$ \varphi_1 : V \rightarrow U \otimes F, \ \varphi_2 : U \rightarrow V \otimes F_{-1}, $$
so that
$$ V \cong \mbox{Com} \left( \mathcal H^1, U \otimes F\right),  \quad  U \cong \mbox{Com} \left( \mathcal H^2, V \otimes F_{-1}\right),$$
where  $\mathcal H^1$ (resp. $\mathcal H^2$) is a   rank one  Heisenberg vertex subalgebra of
$U\otimes F$ (resp. $V\otimes F_{-1}$).

}

\section{Bershadsky-Polyakov algebra $\mathcal{W}_k(sl_3,f_{\theta})$}

Bershadsky-Polyakov vertex algebra $\mathcal{W}^k (=\mathcal{W}^k(sl_3,f_{\theta}))$ is the minimal affine $\mathcal{W}$-algebra associated to the minimal nilpotent element $f_{\theta}$ (cf. \cite{A1}, \cite{GK}, \cite{KRW}, \cite{KW}, \cite{K-Phd}). The algebra $\mathcal{W}^k$ is generated by four fields $T, J,G^+,G^-$, of conformal weights $2, 1, \frac{3}{2}, \frac{3}{2}$ and is a $\frac{1}{2}\mathbb{Z}$-graded VOA.
\begin{definition}
	Universal Bershadsky-Polyakov vertex algebra $\mathcal{W}^k $  is the vertex algebra generated by fields $T ,J,G^+,G^-$, which satisfy the following relations: 
	\begin{align*}
    J(x)J(y) &\sim \frac{2k+3}{3}(z-w)^{-2}, \enskip  G^{\pm}(z)G^{\pm}(w) \sim0, &\\
    J(z)G^{\pm}(w) &\sim \pm G^{\pm}(w)(z-w)^{-1},&\\
	T(z)T(w) &\sim - \frac{c_k}{2}(z-w)^{-4}+2T(w)(z-w)^{-2}+DT(w)(z-w)^{-1},   &\\
	T(z)G^{\pm}(w) &\sim \frac{3}{2} G^{\pm}(w)(z-w)^{-2}+ DG^{\pm}(w)(z-w)^{-1}, &\\
	T(z)J(w) &\sim  J(w)(z-w)^{-2}+DJ(w)(z-w)^{-1}, &\\
	G^+(z)G^-(w) &\sim (k+1)(2k+3)(z-w)^{-3}+3(k+1)J(w)(z-w)^{-2}+ &\\
	    &+ (3:J(w)J(w): + \frac{3(k+1)}{2}DJ(w)-(k+3)T(w))(z-w)^{-1}, &
\end{align*}
where $c_k = -\frac{(3k +1)(2k +3)}{k+3}$.
\end{definition}

Vertex algebra $\mathcal{W}^k$ is called the universal Bershadsky-Polyakov vertex algebra of level $k$. For $k \ne -3$ , $\mathcal{W}^k$ has a unique simple quotient which is denoted by $\mathcal{W}_k$. 

Let
\begin{align*}
 T(z) &= \sum _{n \in \mathbb{Z}} L_nz^{-n-2} &\\
 J(z) &=  \sum _{n \in \mathbb{Z}} J_nz^{-n-1}, &\\ 
 G^+(z) &=  \sum _{n \in \mathbb{Z}} G^+_nz^{-n-1}, &\\ 
 G^-(z) &=  \sum _{n \in \mathbb{Z}} G^-_nz^{-n-1}.   &
\end{align*}

The following commutation relations hold:
\begin{align*}
    [J_m, J_n ] &= \frac{2k+3}{3} m\delta _{m+n,0} , \quad [J_m, G^{\pm}_n] = \pm G^{\pm}_{m+n},   &\\ 
    [L_m, J_n] &= - nJ_{m+n}, &\\ 
    [L_m, G^{\pm}_n] &= (\frac{1}{2}m-n+\frac{1}{2})G^{\pm}_{m+n}, &\\  [G^+_m,G^-_n] &= 3(J^2)_{m+n-1} + \frac{3}{2}(k+1)(m-n)J_{m+n-1}  -  (k+3)L_{m+n-1} + &\\ &+ \frac{(k+1)(2k+3)(m-1)m}{2} \delta _{m+n,1}. &
\end{align*}

\subsection{Structure of the Zhu algebra  $A(\mathcal{W}^k)$}
 
By applying results from \cite{KW} we see that for every $(x,y) \in \mathbb{C}^2$ there exists an irreducible representation $L_{x,y}$ of $\mathcal{W}^k$ generated by a highest weight vector $v_{x,y}$ such that
$$ J_{0}v_{x,y} = xv_{x,y}, \quad J_{n}v_{x,y} = 0 \; \text{for} \: n>0, $$
$$ L_{0}v_{x,y} = yv_{x,y}, \quad L_{n}v_{x,y} = 0 \; \text{for} \: n>0, $$
$$ G^{\pm}_{n}v_{x,y} = 0 \; \text{for} \: n\geq 1. $$

\medskip

Let $A_{\omega}(V)$ denote the Zhu algebra associated to the VOA $V$ (cf. \cite{Z}) with the Virasoro vector $\omega$, and let $[v]$ be the image of $v \in V $ under the mapping $V \mapsto A_{\omega}(V)$. 

For the Zhu algebra $A_{\omega}(\mathcal{W}^k)$ it holds that:
\begin{proposition} [\cite{AK-2019}, Proposition 3.2.]
	There exists a homomorphism  $\Phi :  {\Bbb C}[x,y] \rightarrow A_{\omega}(\mathcal{W}^k)$ such that
	$$ \Phi ( x ) = [J], \ \Phi(y) = [T]. $$ 
\end{proposition}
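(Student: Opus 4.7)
The plan is to use the universal property of $\mathbb{C}[x,y]$ as the free commutative $\mathbb{C}$-algebra on $\{x,y\}$, which reduces the statement to showing that $[J]$ and $[T]$ commute in $A_\omega(\mathcal{W}^k)$. In fact I would prove the slightly stronger general fact that the class $[T]$ of the conformal vector is central in the Zhu algebra; this is a standard feature of Zhu theory, so the content of the proposition is essentially the universal property.

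Concretely, I would invoke Zhu's commutator formula: for $a \in \mathcal W^k$ homogeneous of conformal weight $\mathrm{wt}(a)$ and $b \in \mathcal W^k$ arbitrary,
\[ [a] * [b] - [b] * [a] = \left[ \mathrm{Res}_z \, Y(a,z)\, b\, (1+z)^{\mathrm{wt}(a)-1} \right] \]
in $A_\omega(\mathcal W^k)$. Applied with $a=T$ (weight $2$) and $b = J$ (weight $1$), the residue picks up the two modes $T_0 J + T_1 J = L_{-1} J + L_0 J = L_{-1} J + J$.

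The proof then closes via the general identity $L_{-1} v + L_0 v \in O_\omega(\mathcal W^k)$ for every homogeneous $v$, which one obtains by expanding $v \circ \mathbf{1} = \mathrm{Res}_z \, Y(v,z)\mathbf{1}\,(1+z)^{\mathrm{wt}(v)} z^{-2}$ using $Y(v,z)\mathbf{1} = e^{zL_{-1}}v$ and extracting the coefficient of $z^{-1}$. Specializing to $v = J$ yields $[L_{-1} J + J] = 0$ in $A_\omega(\mathcal W^k)$, hence $[T] * [J] = [J] * [T]$, and the universal property of $\mathbb C[x,y]$ produces the desired homomorphism $\Phi$. There is no genuine obstacle: every step is a standard manipulation in Zhu theory, and the real utility of the proposition lies downstream, in parametrising highest weights of irreducible $\mathcal W^k$-modules by pairs $(x,y) \in \mathbb C^2$ and translating the classification problem into the study of the kernel of $\Phi$.
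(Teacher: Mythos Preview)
Your argument is correct. The reduction to showing that $[J]$ and $[T]$ commute via the universal property of $\mathbb{C}[x,y]$ is the right move, and your verification that $[T]$ is central is a clean instance of the standard Zhu-theory computation: the commutator formula gives $[T]*[J]-[J]*[T]=[L_{-1}J+L_0J]$, and your identity $v\circ\mathbf{1}=L_{-1}v+L_0v\in O_\omega(V)$ for homogeneous $v$ kills this class. (One could also simply quote Zhu's theorem that $[\omega]$ is central in $A_\omega(V)$ for any VOA $V$, which is exactly what your computation reproves in this special case.)

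As for comparison with the paper: the present paper does not supply its own proof of this proposition; it is quoted verbatim from \cite{AK-2019} as background. So there is nothing to compare against here beyond noting that your argument is the expected one and would serve perfectly well as a self-contained justification.
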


It can be shown that the homomorphism $\Phi :  {\Bbb C}[x,y] \rightarrow A_{\omega}(\mathcal{W}^k)$ is in fact an isomorphism, i.e. that $A_{\omega}(\mathcal{W}^k) \cong {\Bbb C}[x,y]$.

\medskip

If we switch to a shifted Virasoro vector $\overline \omega = \omega + \frac{1}{2} DJ$,
the  vertex algebras $\mathcal W^k$ and $\mathcal W_k$ become ${\Z}_{\ge 0}$--graded with respect to $L(0) =\overline \omega _1$. In this case the Zhu algebras are no longer commutative, and they carry more information about the representation theory.
The Zhu algebra associated to $\mathcal{W}^k$ is then realized as a quotient of another associative algebra, the so-called Smith algebra $R(g)$ (introduced in \cite{S}, see also \cite{DLM-smith}). These algebras were used by T. Arakawa in the paper \cite{A1} to prove rationality of $\mathcal{W}_k(sl_3,f_{\theta})$ for $k= p/2-3$, $p \ge 3$, $p$ odd. 

 We expand the original definition of Smith algebras $R(f)$ by adding a central element.

\begin{definition}\label{smith_type}
	Let $ g(x,y) \in \mathbb{C}[x,y] $ be an arbitrary polynomial. Associative algebra \textbf{ $R(g)$ of Smith type} is generated by $\left\{E,F,X,Y\right\}$  such that $Y$  is a central element and the following relations hold:
	\begin{equation*}
	XE-EX = E, \: XF-FX=-F, \:EF-FE=g(X,Y).
	\end{equation*}
\end{definition}

In fact, Zhu algebra associated to $\mathcal{W}^k$ is a quotient of the Smith-type algebra $R(g)$ for a certain polynomial $ g(x,y) \in \mathbb{C}[x,y] $ .

\begin{proposition}  [\cite{AK-2019}, Proposition 4.2.]
	Zhu algebra $A_{\overline \omega}(\mathcal{W}^k)$ is a quotient of the Smith algebra $R(g)$ for $g(x,y) = -(3x^2 - (2k+3)x - (k+3)y)$.
\end{proposition}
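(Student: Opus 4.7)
The plan is to construct a surjective algebra homomorphism
$$ \Phi : R(g) \longrightarrow A_{\overline\omega}(\mathcal{W}^k), \qquad X \mapsto [J], \ Y \mapsto [T], \ E \mapsto [G^+], \ F \mapsto [G^-], $$
and verify that the four defining relations of $R(g)$ (centrality of $Y$, $XE-EX=E$, $XF-FX=-F$, and $EF-FE=g(X,Y)$) are satisfied by these images. Surjectivity is immediate since $\mathcal{W}^k$ is strongly generated by $J,T,G^{\pm}$, so the Zhu algebra is generated by the classes $[J],[T],[G^{\pm}]$. Before applying the Zhu machinery, I would first record the effect of the shift: with $\overline{\omega}=\omega+\tfrac{1}{2}DJ$ one has $\overline L_0 = L_0 - \tfrac{1}{2}J_0$, which assigns the integer weights $\Delta_J=\Delta_{G^+}=1$ and $\Delta_T=\Delta_{G^-}=2$, so that $\mathcal W^k$ becomes $\Z_{\ge 0}$--graded and the standard Zhu commutator formula
$$ [a]*[b] - (-1)^{|a||b|}[b]*[a] \;=\; \sum_{i\ge 0}\binom{\Delta_a-1}{i}\,[a_i b] $$
applies in its integer-weight form. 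Centrality of $[T]$ then follows from centrality of the Virasoro class $[\overline\omega]$ combined with the $L_{-1}$-type identity of the shifted Zhu algebra expressing $[DJ]$ in terms of $[J]$, which reduces $[\overline\omega]$ to $[T]$ modulo a scalar multiple of $[J]$, together with the self-centrality of $[J]$.

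The two Cartan-type relations are direct. Since $\Delta_J=1$, only the $i=0$ term of the commutator formula contributes, and the OPE $J(z)G^{\pm}(w)\sim\pm G^{\pm}(w)(z-w)^{-1}$ gives $J_0 G^{\pm}=\pm G^{\pm}$, hence
$$ [J]*[G^{\pm}]-[G^{\pm}]*[J]=\pm[G^{\pm}], $$
matching $XE-EX=E$ and $XF-FX=-F$.

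The main step, and the principal obstacle, is establishing $[G^+]*[G^-]-[G^-]*[G^+]=g([J],[T])$. Again $\Delta_{G^+}=1$ collapses the formula to the single term $[G^+_0 G^-]$, where $G^+_0$ now denotes the \emph{shifted} zero mode; this mode reads off a specific linear combination of the singular coefficients of
$$ G^+(z)G^-(w)\sim \frac{(k+1)(2k+3)}{(z-w)^3}+\frac{3(k+1)J(w)}{(z-w)^2}+\frac{3{:}JJ{:}(w)+\tfrac{3(k+1)}{2}DJ(w)-(k+3)T(w)}{z-w}. $$
The resulting element of $\mathcal W^k$ is a combination of $\mathbbm{1}$, $J$, $:JJ:$, $DJ$ and $T$, which must then be simplified in $A_{\overline\omega}(\mathcal{W}^k)$ using the identity relating $[DJ]$ to $[J]$ and the standard formula that expresses $[:JJ:]$ as $[J]*[J]$ plus a linear correction in $[J]$ (coming from the $n=-2$ associativity relation in the Zhu algebra). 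The delicate point is the bookkeeping: the shift of the Virasoro vector changes both the effective weight of $G^+$ and the way the $DJ$ coefficient enters the shifted zero mode, and one must verify that all scalar corrections conspire so that the $(z-w)^{-3}$ constant is absorbed, the linear $[J]$ contributions consolidate into $(2k+3)[J]$, the $[T]$ term survives with coefficient $-(k+3)$, and the quadratic part becomes $3[J]^2$. These three precisely assemble into $-g([J],[T]) = 3[J]^2-(2k+3)[J]-(k+3)[T]$, at which point $\Phi$ is a well-defined surjective homomorphism and the proposition follows.
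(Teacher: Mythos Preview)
Your overall strategy is the standard one and is exactly how this result is proved: build a surjection $R(g)\to A_{\overline\omega}(\mathcal W^k)$ by sending the Smith generators to the classes of the strong generators and check the four relations. However, the assignment $Y\mapsto [T]$ is wrong, and this breaks both the centrality of $Y$ and the $EF-FE$ relation. The class $[T]=[\omega]$ is \emph{not} central in $A_{\overline\omega}(\mathcal W^k)$: with $\mathrm{wt}_{\overline\omega}(T)=2$ and $T(z)G^+(w)\sim \tfrac{3}{2}G^+(w)(z-w)^{-2}+\partial G^+(w)(z-w)^{-1}$, the commutator formula gives $[T]*[G^+]-[G^+]*[T]=[DG^+]+\tfrac{3}{2}[G^+]=\tfrac{1}{2}[G^+]\neq 0$. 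Your appeal to ``self-centrality of $[J]$'' cannot rescue this, since you have just shown $[J]*[G^\pm]-[G^\pm]*[J]=\pm[G^\pm]$, so $[J]$ is not central either. The element that \emph{is} automatically central is $[\overline\omega]$, and the correct assignment is $Y\mapsto[\overline\omega]$.

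The same correction repairs the main computation. With $\mathrm{wt}_{\overline\omega}(G^+)=1$ only the $i=0$ term contributes, and
\[
[G^+]*[G^-]-[G^-]*[G^+]=[G^+_0G^-]=3[J]^2-\tfrac{3(k+1)}{2}[J]-(k+3)[T].
\]
If you stop here with $Y=[T]$, the linear coefficient is $-\tfrac{3(k+1)}{2}$, not $-(2k+3)$, so the relation does not match $g$ for any $k\neq -3$. Substituting $[T]=[\overline\omega]+\tfrac{1}{2}[J]$ (from $[DJ]=-[J]$ in $A_{\overline\omega}$) contributes an additional $-\tfrac{k+3}{2}[J]$, and the two linear pieces combine to $-(2k+3)[J]$ as required. (A residual global sign---the result is $-g([J],[\overline\omega])$ with your choice of $E,F$---is cosmetic and is fixed by adjusting the assignment of $E,F$; the substantive error is the target of $Y$.)
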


\section{ Vertex algebra  $\mathcal{W}_k$ for $ k +2 \in {\Z}_{\ge 1}$} \label{integer}
In this section we study the representation theory of the Bershadsky-Polyakov algebra $\mathcal W_k$ at positive integer levels.
In \cite{AK-2019}, we parametrized the highest weights of irreducible $\mathcal{W}_k$--modules as zeroes of certain polynomial functions (cf. Proposition \ref{klas-1}), and conjectured that this provides the complete list of irreducible modules for the Bershadsky-Polyakov algebra $\mathcal W_k$ when $k \in \mathbb{Z}$, $k \geq -1$. This conjecture was proved in cases $k=-1$ and $k=0$ in \cite{AK-2019}. In this paper, we will prove this conjecture for $k \in\mathbb{Z}$, $k \geq 1$.

\subsection{Setup}

Let us choose a  new Virasoro field  $$  L (z)  := T(z) + \frac{1}{2} D  J(z).$$ Then $ \overline \omega = \omega + \frac{1}{2} DJ$ is a conformal vector $ \overline \omega_{n+1} =  L(n) $ with central charge $$\overline c_k = -\frac{4(k +1)(2k +3)}{k+3}.$$ The fields $J, G^+, G^-$ have conformal weights $1,1,2$ respectively.
Set $ J(n) = J_n, \, G^+ (n) = G^+ _n, \, G^-  (n) = G^-  _ {n+1}$. We have
 $$  L(z) = \sum_{n \in {\Z} }  L(n) z^{-n-2}, \; G^+ (z) = \sum_{n \in {\Z} }  G^+ (n) z ^{-n-1}, \;  G^- (z) = \sum_{n \in {\Z} } G^- (n) z^{-n-2}. $$
 
 
This defines a  $\mathbb{Z} _{\ge 0}$-gradation on $\mathcal{W}^k$.

\medskip

Define $$ \Delta (-J,z) = z^{-J(0)}exp \left ( \sum _{k=1} ^{\infty} (-1)^{k+1}\frac{-J(k)}{kz^k}\right ), $$ and let $$ \sum _{n \in \mathbb{Z}} \psi (a_n)z^{-n-1} = Y (\Delta (-J,z)a, z), $$ for $a \in \mathcal{W}^k$.

The operator $\Delta (h,z )$ associates to every $V$-module $M$ a new structure of an irreducible $V$-module (cf. \cite{Li}). Let us denote this new module (obtained using the mapping $a_n \mapsto \psi (a_n)$) with $\psi (M)$.  As  the $\Delta$-operator acts bijectively on the set of irreducible modules, there exists an inverse  $\psi^{-1}(M)$.

\begin{remark}
{ Operators $\psi^{m}$ are also called  the spectral flow automorphisms of $\mathcal W_k$, see  \cite[Section 2]{AKR-2020} and \cite[Subsection 2.2]{KRZ} for more details.}
\end{remark}

From the definition of $\Delta (-J,z)$ we have that
$$ \psi (J(n)) = J(n)- \frac{2k+3}{3} \delta _{n,0} , \quad \psi ( L(n)) = L(n) - J(n) + \frac{2k+3}{3} \delta _{n,0}, $$
$$ \psi (G^+(n)) = G^+(n-1), \quad \psi (G^-(n)) = G^-(n+1). $$

Let $$ L(x,y)_{\text{top}} = \left\lbrace v \in L(x,y) :  L(0)v = y v \right\rbrace, $$ and denote $$ \hat{x}_i = x+i-1 -\frac{2k+3}{3}, \ \hat{y}_i = y-x-i+1 +\frac{2k+3}{3}.$$ The following was proved in \cite{A1}:
\begin{lemma} [\cite{A1}, Proposition 2.3.] \label{L_new}
	Let $ \dim L(x,y)_{\mbox{top}} = i$. Then it holds that $$ \psi(L(x,y)) \cong L(\hat{x}_i, \hat{y}_i). $$
\end{lemma}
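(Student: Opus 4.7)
My plan is to exhibit an explicit new highest weight vector for $\psi(L(x,y))$ inside the underlying space of $L(x,y)$, compute its weights under the new action, and conclude by irreducibility of the spectral flow.

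Since the top component $L(x,y)_{\mathrm{top}}$ carries a simple module for the Zhu algebra $A_{\overline\omega}(\mathcal W^k)$ (a quotient of a Smith algebra, with $E \leftrightarrow G^+(0)$ and $F \leftrightarrow G^-(0) = G^-_1$), and since the old highest weight vector $v$ is annihilated by $G^-(0)$, the hypothesis $\dim L(x,y)_{\mathrm{top}} = i$ forces
\[
L(x,y)_{\mathrm{top}} = \mathrm{span}\{v, G^+(0) v, \ldots, G^+(0)^{i-1} v\}, \qquad G^+(0)^i v = 0.
\]
I take $w := G^+(0)^{i-1} v$ as the candidate new highest weight vector.

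To verify that $w$ is a new highest weight vector of $\psi(L(x,y))$, I translate the positive-mode annihilation conditions through the $\psi$-formulas: under the original action I must check $G^+(m) w = 0$ for $m \geq 0$, $G^-(m) w = 0$ for $m \geq 2$, and $J(m) w = L(m) w = 0$ for $m \geq 1$. The case $G^+(0) w = 0$ is immediate from $G^+(0)^i v = 0$; the remaining $G^+$ checks follow from $G^+(z) G^+(w) \sim 0$. The $J$ and $L$ checks use $[J(m), G^+(0)] = G^+(m)$ and $[L(m), G^+(0)] = 0$ (the latter because $G^+$ has new conformal weight one), together with $J(m) v = L(m) v = G^+(m) v = 0$ for $m \geq 1$. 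The only delicate check is the $G^-$ case: commuting $G^-(m)$ past the copies of $G^+(0)$ produces, at each step, the commutator
\[
[G^+(0), G^-(m)] = 3(J^2)_m - \tfrac{3(k+1)(m+1)}{2} J_m - (k+3) L_m \qquad (m \geq 2),
\]
a sum of strictly-positive-grade modes of $J$, of the original Virasoro $T$, and of the normally ordered product $(J^2)$. Each of these lowers the $L(0)$-grade and therefore annihilates every vector of the minimal-grade top component $L(x,y)_{\mathrm{top}}$; hence the commutator kills each $G^+(0)^j v$, the induction closes, and $G^-(m) w = G^+(0)^{i-1} G^-(m) v = 0$.

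A direct application of $\psi(J(0)) = J(0) - \tfrac{2k+3}{3}$ and $\psi(L(0)) = L(0) - J(0) + \tfrac{2k+3}{3}$ to $w$, using $J(0) w = (x+i-1) w$ and $L(0) w = y w$, yields new eigenvalues $\hat x_i$ and $\hat y_i$. Finally, since $\psi$ is induced by Li's $\Delta$-operator and preserves irreducibility, $\psi(L(x,y))$ is an irreducible $\mathcal W_k$-module containing a highest weight vector $w$ of weight $(\hat x_i, \hat y_i)$, and is therefore isomorphic to $L(\hat x_i, \hat y_i)$. The main obstacle is the $G^-$ verification, which rests on the observation that every term in $[G^+(0), G^-(m)]$ for $m \geq 2$ is a strictly-positive-grade mode of the auxiliary fields and hence annihilates top-component vectors.
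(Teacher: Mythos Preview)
The paper does not supply its own proof of this lemma; it merely cites \cite[Proposition~2.3]{A1}.  Your direct argument---locating $w=G^+(0)^{i-1}v$ in the old top, verifying the new annihilation conditions, and reading off the shifted eigenvalues---is exactly the standard approach and is essentially correct.

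Two small points.  First, your stated range for the $G^-$ check is off by one: to pin down the irreducible module as $L(\hat x_i,\hat y_i)$ you also need $\psi(G^-)(0)\,w=0$, i.e.\ $G^-(1)\,w=0$ in the old action (otherwise you only know that $w$ lies in the new top, which need not be one--dimensional).  Second, the commutator argument you flag as ``delicate'' is in fact unnecessary: since $G^-$ has $\overline\omega$--weight $2$, the operator $G^-(m)$ strictly lowers $L(0)$--degree for every $m\ge 1$, so $G^-(m)\,w=0$ is immediate from $w$ sitting in the minimal $L(0)$--eigenspace.  This covers both the positive-mode conditions and the missing $m=1$ case in one stroke, and the inductive commutator computation can be dropped entirely.
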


\medskip

\subsection{Necessary condition for $\mathcal{W}_k$--modules}

Starting point in the classification of irreducible $\mathcal{W}_k$--modules for integer levels $k$ is the following formula for singular vectors (cf. \cite{AK-2019}). These generalize a construction of a family of singular vectors by T. Arakawa in \cite{A1}, where he found a similar formula for singular vectors in $\mathcal W^k$ at levels $k= p/2-3$, $p \ge 3$, $p$ odd.

\begin{lemma} [\cite{AK-2019}, Lemma 8.1.] \label{integral-sing}
	Vectors $$G^+(-1)^{n}\mathbbm{1}, \; G^-(-2)^{n}\mathbbm{1}$$ are singular in $\mathcal{W}^k$ for $n = k+2$, where $k \in \mathbb{Z}$, $k \geq -1$.
\end{lemma}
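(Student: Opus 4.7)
My plan is to prove singularity of $v_n := G^+(-1)^n \mathbbm{1}$ by directly checking that every positive mode $a(m)$, $m \geq 1$, annihilates $v_n$ precisely when $n = k+2$; the second singular vector will then come from a symmetry of $\mathcal W^k$. The cases $a \in \{J, L, G^+\}$ are immediate: since $G^+$ commutes with itself and both $[J(m), G^+(-1)]$ and $[L(m), G^+(-1)]$ equal $G^+(m-1)$ (here $G^+$ is primary of conformal weight one for the shifted Virasoro $L$), commuting these modes past all $n$ copies of $G^+(-1)$ reduces to $G^+(m-1)\mathbbm{1} = 0$.

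For $G^-(m)$ I would first invoke a weight/charge grading argument. A PBW-type count on the generators of $\mathcal W^k$ shows that the minimum $L(0)$-weight in the $J(0)$-charge-$c$ subspace (for $c \geq 0$) is exactly $c$, attained uniquely up to scalar by $G^+(-1)^c\mathbbm{1}$, because every occurrence of $G^-(-t)$, $t \geq 2$, contributes at least three extra units of weight per extra unit of charge. The vector $G^-(m) v_n$ lies in charge $n-1$ and weight $n-m$, so for $m \geq 2$ its weight falls below this minimum and it vanishes automatically; for $m=1$ it must be a scalar multiple $P_n(k) \cdot v_{n-1}$.

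The heart of the argument is to compute $P_n(k)$. I would use the recursion
$$ G^-(1) v_{n+1} = [G^-(1), G^+(-1)] v_n + G^+(-1) G^-(1) v_n, $$
translating the given commutator $[G^+_m, G^-_n]$ into the new modes via $L_n = L(n) + \tfrac{1}{2}(n+1) J(n)$, and evaluating the resulting scalar operator on $v_n$ using the easy spectra $J(0) v_n = n v_n$, $L(0) v_n = n v_n$ and, because $J(m) v_n = 0$ already for $m \geq 1$, $(J^2)(0) v_n = n^2 v_n$. The recurrence then reads $P_{n+1} = P_n + Q(n,k)$ with $P_0 = 0$ and $Q$ quadratic in $n$, which telescopes to a cubic; the expected factorization is $P_n(k) = -n(n-(k+2))(n-2(k+2))$, vanishing at $n = k+2$. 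The only possible obstacle is careful bookkeeping through the Virasoro shift inside the commutator; the conceptual content is in place.

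For $G^-(-2)^n \mathbbm{1}$ I would avoid repeating the analysis by introducing the vertex algebra automorphism $\omega$ of $\mathcal W^k$ defined by $J \mapsto -J$, $L \mapsto L$, $G^+ \mapsto \lambda G^-$, $G^- \mapsto -\lambda^{-1} G^+$ (for any nonzero $\lambda$). Every defining OPE is manifestly preserved except $G^+ G^-$, whose right-hand side is invariant because the polynomial $m(m-1)$ appearing in the central term is symmetric under $m \leftrightarrow n$ on the support of $\delta_{m+n,1}$. Since $G^-_{-1} = G^-(-2)$ in the new notation, $\omega$ sends $G^+(-1)^n \mathbbm{1}$ to a nonzero scalar multiple of $G^-(-2)^n \mathbbm{1}$, and automorphisms preserve singularity.
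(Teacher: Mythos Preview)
The paper does not give its own proof of this lemma; it is quoted from \cite{AK-2019}.  The closed formulas from \cite[Section 8.1]{AK-2019} that are recalled later in this paper,
\[
G^+_2(G^-)^n = 2n(k-(n-2))(k-(n-2)+n/2)(G^-)^{n-1},\qquad
G^+_1(G^-)^n = 3n(k-(n-2))J_{-1}(G^-)^{n-1}+\cdots,
\]
show that the original argument computes $G^+_m(G^-_{-1})^n\mathbbm{1}$ directly and notes that the factor $k-(n-2)$ kills everything at $n=k+2$.  Your approach is the mirror image: you compute $G^-(m)$ on $(G^+(-1))^n\mathbbm{1}$, and your recursion for $P_n(k)$ is the dual of those formulas.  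The factorization $P_n(k)=-n(n-(k+2))(n-2(k+2))$ is correct, the PBW weight/charge argument for $m\ge 2$ is valid, and the cases $a\in\{J,L,G^+\}$ are fine.  Using the conjugation automorphism to pass to $G^-(-2)^n\mathbbm{1}$, rather than repeating the calculation, is a clean variation not spelled out in the cited proof.

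There is one genuine slip.  The automorphism you write down cannot fix the shifted Virasoro $L$: since $L=T+\tfrac12 DJ$ and you send $J\mapsto -J$, fixing $L$ forces $T\mapsto T+DJ$, and then the $T\,G^{\pm}$ OPE is no longer preserved (the conformal weight of $G^{\pm}$ would shift).  The correct involution fixes $T$, so that $L\mapsto T-\tfrac12 DJ$; with this correction all defining OPEs are preserved, your $(m-1)m$ symmetry observation is exactly the right check for the $G^+G^-$ relation, and $G^+_{-1}\mapsto\lambda G^-_{-1}$ still carries one singular vector to the other.

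A smaller point: you verify $G^-(m)v_n=0$ only for $m\ge 1$, i.e.\ $G^-_r$ for $r\ge 2$ in the original indexing.  In the $\mathbb Z$--grading by $\bar\omega$ used throughout Section~4 this is the full singularity condition, so your argument is complete there.  If one instead reads ``singular'' in the half-integer $T$--grading (where $G^-_1$ already lowers weight by $\tfrac12$, and the highest-weight condition of Section~3 demands $G^{\pm}_n v=0$ for $n\ge 1$), then $G^-_1 v_{k+2}=0$ must also be checked; the formula for $G^+_1(G^-)^n$ above is precisely what supplies this on the dual side.
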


Let $$g(x,y) = -(3x^2 - (2k+3)x - (k+3)y)\in \mathbb{C}[x,y]$$ and define polynomials $h_i(x,y)$, for $i\in \mathbb{N}$ (cf. \cite{A1}) as
\begin{align*} 
h_i(x,y) &= \frac{1}{i}(g(x,y) + g(x+1,y) + ... + g(x+i-1,y)) &\\
&= -i^2+ki-3xi+3i-3x^2-k+2kx+6x+ky+3y-2.&
\end{align*}

The next technical lemma follows immediately from the definiton of $h_i(x,y)$ and $\hat{x}_i, \hat{y}_i$.

\begin{lemma} \label{i+j}
	Assume that $ i+j = k+3 $. Then it holds that  $$ h_i (x,y) = h_j (\hat{x}_i, \hat{y}_i). $$
\end{lemma}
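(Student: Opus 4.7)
The identity asserted is a polynomial identity in $x,y$ (with $k$, $i$ as parameters and $j=k+3-i$), and I would verify it by a direct substitution. A convenient first step is to rewrite the given closed form as
\[ h_i(x,y) = -3x^2 + (2k+6-3i)\,x + (k+3)\,y + (i-1)(k+2-i), \]
which is easily obtained either by regrouping the displayed formula or from the defining sum $i\,h_i(x,y) = \sum_{\ell=0}^{i-1} g(x+\ell,y)$. In this form the symmetry between $i$ and $k+3-i$ becomes transparent.

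Several comparisons are then immediate. With $j=k+3-i$ one has $j-1=k+2-i$ and $k+2-j=i-1$, so the constant term $(j-1)(k+2-j)$ equals $(i-1)(k+2-i)$ automatically, and $2k+6-3j = 3i-k-3$. Furthermore, $\hat y_i = y - x - (i-1) + \tfrac{2k+3}{3}$ carries $y$ with coefficient one while $-3\hat x_i^{\,2}$ and $(3i-k-3)\hat x_i$ are $y$-free, so the coefficient of $y$ in $h_j(\hat x_i,\hat y_i)$ is exactly $(k+3)$, matching $h_i$. Hence it remains only to check agreement of the terms depending solely on $x$ and the parameters.

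For this I would substitute $\hat x_i = x + (i-1) - \tfrac{2k+3}{3}$ into
\[ -3\,\hat x_i^{\,2} + (3i-k-3)\,\hat x_i + (k+3)\bigl(-x - (i-1) + \tfrac{2k+3}{3}\bigr), \]
where the last bracket is the $y$-free part of $(k+3)\hat y_i$. The $x^2$-coefficient is $-3$; the $x$-coefficient assembles as $-6\bigl(i-1-\tfrac{2k+3}{3}\bigr) + (3i-k-3) - (k+3)$, which collapses to $2k+6-3i$; and setting $b = i-1$, $c = \tfrac{2k+3}{3}$ the pure constants combine as
\[ -3(b-c)^2 + (3b-k)(b-c) - (k+3)b + (k+3)c \;=\; (b-c)\bigl(3c - (2k+3)\bigr) \;=\; 0, \]
so that only the $(i-1)(k+2-i)$ already present in $h_j$ survives. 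Altogether the right-hand side reproduces $h_i(x,y)$.

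The only obstacle is routine bookkeeping; there is no conceptual difficulty. Conceptually, the identity is the algebraic shadow of the duality expressed in Lemma \ref{L_new}: the spectral flow $\psi$ interchanges the singular-vector conditions associated to $G^+(-1)^i\mathbbm{1}$ and $G^-(-2)^{\,k+3-i}\mathbbm{1}$ of Lemma \ref{integral-sing}, which forces the corresponding vanishing polynomials $h_i$ and $h_j$ to coincide after the shift $(x,y)\mapsto(\hat x_i,\hat y_i)$.
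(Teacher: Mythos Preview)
Your proof is correct and follows essentially the same approach as the paper's: both verify the polynomial identity by direct substitution and simplification. Your version is more streamlined because you first rewrite $h_i(x,y) = -3x^2 + (2k+6-3i)x + (k+3)y + (i-1)(k+2-i)$ and then check coefficients separately, whereas the paper simply expands $h_{k+3-i}(\hat x_i,\hat y_i)$ wholesale and collects terms.
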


\begin{proof}
 We have
\begin{gather*}
\begin{split} 
	h_{k+3-i} (\hat{x}_i,\hat{y}_i)
	& =(-3x^2-9x+12x+6x-3x + 3xi-3kx-6xi+4kx+2kx-kx) + \\ & + (ky+3y) + (-i^2+2ki+ 
	+6i-k^2-6k-9+k^2+3k-ki+3i^2- \\ &  - 5ki-15i+2k^2 + 12k+18  +3k+9-3i
	-3i^2+4ki+ \\
	& +12i-\tfrac{4}{3}k^2-8k-12-k+2ki-2k-\tfrac{4}{3}k^2+6i-6-
	4k- \\ & -6-ki+k+\tfrac{2}{3}k^2+k-3i+3+2k+1) \\
	& =(-3x^2-3ix+2kx+6x) + (ky+3y) + (-i^2+ki+3i-k-2) \\
	&= h_i(x,y).
	\end{split}
	\end{gather*}
\end{proof}

Define the set 
$$  \mathcal S_k= \left\{  (x,y)  \in {\C} ^ 2 \ \vert \exists i,  \  1 \leq i \leq k+2, \  h_i(x,y) = 0 \right\}.$$


In \cite{AK-2019} we proved that in order for $L(x,y)$ to be an irreducible ordinary $ \mathcal W_k$--module, $(x,y)$ needs to belong to the set $\mathcal S_k $.
\begin{proposition}[\cite{AK-2019}] \label{klas-1}
	Let $k \in \mathbb{Z}$, $k \geq -1$. Then we have:
	\begin{itemize}
	\item[(1)] The set of equivalency classes of irreducible ordinary $ \mathcal W_k$--modules   is contained in the set $$ \{ L(x,y) \ \vert \  (x,y) \in  \mathcal S_k \}. $$
	\item[(2)]  Every irreducible   $ \mathcal W_k$--module in the category $\mathcal O$ is an ordinary module.
	\end{itemize}
\end{proposition}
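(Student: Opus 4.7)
The plan is to analyse the top component of an arbitrary irreducible module with respect to the shifted Virasoro $\bar\omega = \omega + \tfrac12 DJ$, combining the Smith algebra description of $A_{\bar\omega}(\mathcal W^k)$ (Proposition 4.2) with the singular vector of Lemma \ref{integral-sing}.

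Let $M$ be an irreducible $\mathcal W_k$-module in category $\mathcal O$, and let $V = M_{\mathrm{top}}$ denote its minimum $L(0)$-eigenspace, where $L(0) = \bar\omega_1$. By general Zhu theory, $V$ is an irreducible module over $A_{\bar\omega}(\mathcal W_k)$; by Proposition 4.2 this algebra is a quotient of $R(g)$ with $g(x,y) = -(3x^2 - (2k+3)x - (k+3)y)$, so $V$ becomes an irreducible $R(g)$-module on which the central generator $Y$ acts as a scalar. The crucial extra relation comes from Lemma \ref{integral-sing}: since $(G^+(-1))^{k+2}\mathbbm{1}$ is singular in $\mathcal W^k$ and therefore vanishes in $\mathcal W_k$, and since $G^+(z)G^+(w)\sim 0$ ensures that its Zhu image coincides with $[G^+]^{*(k+2)} = E^{k+2}$, we obtain $E^{k+2}=0$ on $V$.

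Inside $V$ we locate the highest-weight vector $v_{x,y}\in M$, which satisfies $F v_{x,y} = G^-_1 v_{x,y} = 0$ by the highest-weight conditions, and $Xv_{x,y}=xv_{x,y}$, $Yv_{x,y}=y v_{x,y}$ under the identification used in \cite{AK-2019}. Iteratively applying $[E,F]=g(X,Y)$ together with $E\,p(X) = p(X-1)\,E$ produces the identity
\begin{equation*}
F E^j v_{x,y} \;=\; -\,j\,h_j(x,y)\,E^{j-1} v_{x,y}\qquad (j\ge 1),
\end{equation*}
so the span $W = \langle E^j v_{x,y} : j \ge 0\rangle$ is an $R(g)$-submodule of $V$, and equals $V$ by irreducibility. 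Since $E^{k+2}=0$ on $V$, this span terminates at some length $d\le k+2$; irreducibility then forces $h_j(x,y)\ne 0$ for $1\le j<d$ and $h_d(x,y) = 0$ (for if some $h_j(x,y)$ vanished with $j<d$, then $\{E^i v_{x,y}:i\ge j\}$ would be a proper submodule). Hence $(x,y)\in\mathcal S_k$, proving part (1). Part (2) is immediate: $\dim V\le k+2$ together with strong generation of $\mathcal W_k$ by the four fields $J,T,G^\pm$ implies that all $L(0)$-eigenspaces of $M$ are finite-dimensional, so $M$ is ordinary.

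The main technical obstacle is the careful bookkeeping of the identification $X,Y,E,F \leftrightarrow [J],[T],[G^+],[G^-]$ in $A_{\bar\omega}(\mathcal W^k)$—including the scalar corrections that arise when one passes from $\omega$ to the shifted conformal vector $\bar\omega = \omega + \tfrac12 DJ$—so that the $(x,y)$ produced by the Smith computation really matches the parameters labelling $L(x,y)$. Once this identification is pinned down (it is set up in \cite{AK-2019}), the remaining steps are routine inductions in $R(g)$ together with the single application of the singular vector relation $E^{k+2}=0$.
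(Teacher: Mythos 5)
Your argument is correct and is essentially the proof given in \cite{AK-2019} (which this paper only cites): the Smith-algebra presentation of $A_{\bar\omega}(\mathcal W^k)$, the relation $E^{k+2}=0$ coming from the singular vector of Lemma \ref{integral-sing}, and the telescoping identity $FE^jv=-j\,h_j(x,y)E^{j-1}v$ — whose averaged summands are precisely how the paper defines $h_j$ — are exactly the ingredients used there. The only point to tighten is part (2): for a category-$\mathcal O$ module that is not yet known to be ordinary, the existence of a vector annihilated by $F$ (or by $E$) in the top component is itself a consequence of the nilpotency $E^{k+2}=F^{k+2}=0$ on any irreducible Zhu-algebra module, after which your finite-dimensionality argument goes through.
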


We stated the following conjecture (and proved it for  $k=-1$ and $k= 0$):

\begin{conj} \label{slutnja-1}\cite{AK-2019}
The set   $ \{ L(x,y) \ \vert  \ (x,y) \in  \mathcal S_k \} $	 is the set of all irreducible ordinary $\mathcal W_k$--modules.
\end{conj}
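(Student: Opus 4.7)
The converse direction of Proposition \ref{klas-1} is what remains: for every $(x,y) \in \mathcal{S}_k$ I must show that $L(x,y)$ descends from $\mathcal{W}^k$ to the simple quotient $\mathcal{W}_k$. My plan is to exhibit, for each $1 \le i \le k+2$, an infinite family of points on the curve $\{h_i = 0\}$ whose associated $L(x,y)$ is already known to be a $\mathcal{W}_k$-module, and then to close the argument by an algebraic continuation in the spirit of Hilbert's Nullstellensatz.

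The input is the spectral flow automorphism $\psi$ of $\mathcal{W}_k$. Since $\psi$ permutes isomorphism classes of irreducible $\mathcal{W}_k$-modules, every iterate $\psi^n(\mathcal{W}_k)$ is again an irreducible $\mathcal{W}_k$-module. Using the explicit formulas
\[ \psi(J(n)) = J(n) - \tfrac{2k+3}{3}\delta_{n,0}, \qquad \psi(L(n)) = L(n) - J(n) + \tfrac{2k+3}{3}\delta_{n,0}, \]
together with Lemma \ref{L_new} tracking the top component under $\psi$, I can compute the highest weight $(x_n, y_n)$ of $\psi^n(\mathcal{W}_k) = L(x_n, y_n)$ by induction, starting from $\mathcal{W}_k = L(0,0)$. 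A direct calculation should then yield
\[ h_1(x_{2n}, y_{2n}) = 0, \qquad h_{k+2}(x_{2n+1}, y_{2n+1}) = 0 \]
for every $n \ge 0$, providing infinitely many points on each of the two boundary curves $\{h_1 = 0\}$ and $\{h_{k+2} = 0\}$.

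Now I apply algebraic continuation. The set of highest weights of irreducible $\mathcal{W}_k$-modules is the zero locus in $\mathbb{C}^2$ of the ideal of $A_{\overline\omega}(\mathcal{W}^k)$ generated by the images of the singular vectors of Lemma \ref{integral-sing}, hence it is Zariski closed. Each polynomial $h_i$ is linear in $y$ with nonvanishing coefficient $k+3$, so it is irreducible in $\mathbb{C}[x,y]$. An irreducible polynomial whose zero curve contains infinitely many points of a proper Zariski closed set must itself belong to the radical of the defining ideal, so its entire zero set lies in that closed set. Applying this to $\{h_1 = 0\}$ and $\{h_{k+2} = 0\}$ shows that $L(x,y)$ is a $\mathcal{W}_k$-module for every point on either of these two curves.

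For each intermediate $2 \le i \le k+1$, I would pick a special point $(x^i, y^i)$ lying on both $\{h_i = 0\}$ and $\{h_{k+2} = 0\}$. Such a point exists because the two curves are distinct (they give inequivalent quadratic expressions for $y$ in $x$), and over $\mathbb{C}$ two distinct affine curves of this type always meet in at least one point. Moreover, one should arrange the choice so that $\dim L(x^i, y^i)_{\mbox{top}} = i$; the identity of Lemma \ref{i+j} shows that under $\psi$ the curve $\{h_i = 0\}$ swaps with $\{h_{k+3-i} = 0\}$, so iterating $\psi^2$ preserves the curve and yields an infinite family $L(x^i_{2n}, y^i_{2n})$ of $\mathcal{W}_k$-modules with highest weights on $\{h_i = 0\}$. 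One final invocation of the algebraic continuation argument then covers the entire curve. The main obstacle I anticipate is precisely the bookkeeping in this last step: choosing $(x^i, y^i)$ so that the top component has the correct dimension $i$, so that Lemma \ref{L_new} applies with the intended shift and the full $\psi^2$-orbit genuinely stays on $\{h_i = 0\}$ rather than hopping between nearby components of the module locus.
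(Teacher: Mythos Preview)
Your proposal is correct and follows essentially the same strategy as the paper: spectral flow orbits of $\mathcal{W}_k$ produce infinitely many highest weights on $\{h_1=0\}$ and $\{h_{k+2}=0\}$, an algebraic continuation argument then covers those curves entirely, and for each intermediate $i$ one seeds with a point on $\{h_i=0\}\cap\{h_{k+2}=0\}$ whose top component has dimension exactly $i$ and iterates $\psi^2$, repeating the continuation. The paper carries out precisely this plan, including the explicit bookkeeping you flag as the main obstacle (it writes down $(x^i,y^i)=\bigl(\tfrac{1-i}{3},\tfrac{(i-1)(i-1-k)}{3(k+3)}\bigr)$ and verifies $h_j(x^i,y^i)=(i-j)(j-k-2)$, so the minimal vanishing index is $i$, and then checks directly that the $\psi^2$--orbit stays on $\{h_i=0\}$ with top dimension $i$ throughout).

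One small imprecision worth correcting: the Zariski-closedness of the highest-weight locus is obtained in the paper from $A_{\omega}(\mathcal{W}_k)\cong\mathbb{C}[x,y]/I$ with $I$ finitely generated by Noetherianity of $\mathbb{C}[x,y]$, not by identifying $I$ as the ideal generated by the images of the singular vectors of Lemma~\ref{integral-sing} in $A_{\overline\omega}(\mathcal{W}^k)$ --- that identification would require knowing those singular vectors generate the full maximal submodule, which is neither proved nor needed. Your use of the irreducibility of $h_i$ to conclude $\{h_i=0\}\subseteq V(I)$ is a clean variant of the paper's parametrize-the-curve-and-reduce-to-one-variable argument.
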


\smallskip

 The proof of Conjecture  \ref{slutnja-1} is reduced to showing that   $L(x,y)$, $(x,y) \in  \mathcal S_k$,  are indeed $\mathcal W_k$--modules.  In what follows we shall prove Conjecture  \ref{slutnja-1}.
 
 \subsection{Simple current $\mathcal W_k$--modules}

First step in the proof is to construct an infinite family of irreducible $\mathcal W_k$--modules $L(x,y)$  satisfying the conditions $h_1(x,y) = 0$ or $h_{k+2}(x,y) = 0$.
We have the following important lemma:

\begin{lemma} \label{aut-aplication} Assume that $n \in {\Z}_{\ge 0}$.  Define
\bea x_{2n} &=& - n \frac{k+3}{3}, \nonumber \\   x_{2n+1} &=& -n-1 - \frac{( n+2) k}{3}, \nonumber \\
 y_{2n} &=& \frac{n}{3} (3 + 2 k  +  (k+3) n), \nonumber \\
 y_{2n+1} &=&  \frac{n+1}{3}(  n(k+3)  + 2k+3 ) . \nonumber \eea
Then  for each $n \in {\Z}_{\ge 0} $ we have
\begin{itemize}
\item $L(x_n, y_n)$  is irreducible $\mathcal W_k$--module.
\item  $\Psi ^n (\mathcal W_k) \cong L(x_n, y_n)$.
 \item  $h_1 (x_{2n}, y_{2n}) = h_{k+2} (x_{2n+1}, y_{2n+1} ) = 0$.
\end{itemize}
\end{lemma}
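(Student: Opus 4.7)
My strategy is to prove all three bullets simultaneously by induction on $n$, combining Arakawa's Lemma \ref{L_new} with the complementarity identity of Lemma \ref{i+j}. For the base case $n = 0$, I simply note $(x_0, y_0) = (0, 0)$, so $\Psi^0(\mathcal W_k) = \mathcal W_k = L(0,0)$, the top is one-dimensional, and $h_1(0,0) = 0$ by substitution.

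For the inductive step from $n = 2m$ to $n = 2m+1$, I first invoke the hypothesis $\Psi^{2m}(\mathcal W_k) = L(x_{2m}, y_{2m})$ with $h_1(x_{2m}, y_{2m}) = 0$. To apply Lemma \ref{L_new} I need the top dimension, which I obtain from the Smith-algebra recursion
$$G^{-}(0)\,G^{+}(0)^{i} v \,=\, i\, h_i(x,y)\, G^{+}(0)^{i-1} v$$
on the highest-weight vector: this recursion identifies $\dim L(x,y)_{\mathrm{top}}$ with the smallest positive $i$ satisfying $h_i(x,y) = 0$, forcing $\dim L(x_{2m}, y_{2m})_{\mathrm{top}} = 1$. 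Lemma \ref{L_new} with $i = 1$ will then yield $\Psi^{2m+1}(\mathcal W_k) = L(\hat x_1, \hat y_1)$, and a routine rational computation identifies $(\hat x_1, \hat y_1) = (x_{2m+1}, y_{2m+1})$. Lemma \ref{i+j} with $(i, j) = (1, k+2)$ immediately gives $h_{k+2}(x_{2m+1}, y_{2m+1}) = h_1(x_{2m}, y_{2m}) = 0$. The odd-to-even step $n = 2m+1 \to 2m+2$ is symmetric, using $i = k+2$ in Lemma \ref{L_new} and $(i, j) = (k+2, 1)$ in Lemma \ref{i+j}.

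The hard part will be confirming $\dim L(x_{2m+1}, y_{2m+1})_{\mathrm{top}} = k+2$ exactly, since the identity $h_{k+2} = 0$ by itself only yields an upper bound. My plan is to analyze $H_n(i) := h_i(x_n, y_n)$ as a quadratic polynomial in $i$; a direct computation should produce the factorizations
$$H_{2m+1}(i) = -(i-(k+2))\bigl(i-(m+2)(k+2)-m\bigr), \quad H_{2m}(i) = -(i-1)\bigl(i-(m+1)(k+3)+1\bigr),$$
whose only roots in $\{1,\dots,k+2\}$ are $i=k+2$ and $i=1$ respectively (for $k\ge 1$). Combined with the Smith-algebra recursion above, this pins down the required top dimensions and closes the induction.
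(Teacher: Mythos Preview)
Your proposal is correct and follows essentially the same route as the paper's proof: both compute the full set of roots of $i\mapsto h_i(x_n,y_n)$ (the paper states them directly, you give the equivalent quadratic factorizations), use this to pin down $\dim L(x_n,y_n)_{\mathrm{top}}$, and then iterate Lemma~\ref{L_new} to identify $\Psi^n(\mathcal W_k)$ with $L(x_n,y_n)$. Your presentation is a more explicit induction and invokes Lemma~\ref{i+j} for the third bullet where the paper simply reads it off from the root computation, but the underlying argument is the same.
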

\begin{proof} 
  
 By direct calculation we have  \bea h_i(x_{2n}, y_{2n}) = 0 &\iff & i \in \{1,  k+2+ n (k+3) \}, \nonumber \\
 h_i(x_{2n+1}, y_{2n+1}) = 0  &\iff &   i \in \{k+2,  2 (k+2) + n (k+3) \}. \nonumber \eea

 We see that $(x_n,y_n)$ is the unique  solution of the following recursive relations:
 \bea 
&& x_{0} = y_{0} = 0, \nonumber \\
&& x_{2n+1} =    x_{2n} - \frac{2k+3}{3}, \ y_{2n+1} = \hat y_{2n} =  y_{2n} -  x_{2n+1},  \nonumber \\
&& x_{2n+2} =  \hat x_{2n+1} = x_{2n+1}  + \frac{k}{3}, \ y_{2n+2} = \hat y_{2n+1} =  y_{2n+1} -  x_{2n+2}.   \nonumber 
\eea
Then for each $n \in {\Z}_{\ge 0}$ we have 
$$L( x_{n}, y_{n}) = \Psi^{n} (L(0,0)) = \Psi^{n} (\mathcal W_k). $$
So $L( x_{n}, y_{n})$ is an  irreducible $\mathcal W_k$--module.

\end{proof}

\begin{lemma} 
Assume that $n \in {\Z}_{<0}$.  Define
\bea x_{2n} &=& - n \frac{k+3}{3}, \nonumber \\   x_{2n-1} &=& 1-n - \frac{( n-2) k}{3}, \nonumber \\
 y_{2n} &=& -\frac{n}{3} (k  - (k+3) n), \nonumber \\
 y_{2n-1} &=&  -\frac{n}{3}( 2k+3 - n(k+3)).  \nonumber \eea
 Then  for each $n \in {\Z}_{< 0} $ we have
\begin{itemize}
\item $L(x_n, y_n)$  is irreducible $\mathcal W_k$--module.
\item  $\Psi ^n (\mathcal W_k) \cong L(x_n, y_n)$.
 \item  $h_{k+2} (x_{2n}, y_{2n}) = h_{1} (x_{2n+1}, y_{2n+1} ) = 0$.
\end{itemize}
\end{lemma}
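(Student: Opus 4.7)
The proof is the negative-index counterpart of the preceding Lemma \ref{aut-aplication} and proceeds along the same lines, now using $\Psi^{-1}$ instead of $\Psi$. Because $\Psi$ induces a bijection on the set of isomorphism classes of irreducible $\mathcal W_k$-modules, so does $\Psi^{-1}$; consequently $\Psi^n(\mathcal W_k)$ is an irreducible $\mathcal W_k$-module for every $n<0$, which gives the first bullet. The action of $\Psi^{-1}$ on the generators is obtained by algebraically inverting the formulas for $\Psi$; in particular it shifts the modes of $G^+$ upward and those of $G^-$ downward, thereby exchanging the roles played by $G^+$ and $G^-$ relative to $\Psi$.

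To identify $\Psi^n(\mathcal W_k)$ with $L(x_n, y_n)$, I would use the $\Psi^{-1}$-analog of Lemma \ref{L_new}: inverting the map $(x',y')\mapsto(\hat x_j, \hat y_j)$ produces explicit $\check x_j, \check y_j$ so that $\Psi^{-1}(L(x,y))\cong L(\check x_j, \check y_j)$ whenever the top component of the appropriate module has dimension $j$. Iterating from $L(0,0)=\mathcal W_k$, the top dimensions of the successive modules $\Psi^n(\mathcal W_k)$ alternate between $1$ and $k+2$, the latter dictated by the $G^{\pm}$-singular vectors of Lemma \ref{integral-sing} (now operative in the $G^-$-direction because $\Psi^{-1}$ swaps $G^+ \leftrightarrow G^-$). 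Solving the resulting two-step recursion with base case $(x_0,y_0)=(0,0)$ produces precisely the closed-form expressions displayed in the statement.

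The third bullet is then a direct substitution into the definition of $h_i$, mirroring the verification in the positive-index lemma. The swap between the roles of $h_1$ and $h_{k+2}$ in the two lemmas reflects exactly the duality $h_i(x,y)=h_{k+3-i}(\hat x_i,\hat y_i)$ of Lemma \ref{i+j}: reversing the direction of spectral flow conjugates the index $i$ to $k+3-i$, thereby swapping $h_1 \leftrightarrow h_{k+2}$. The main technical point, as in the positive case, is to confirm that the intermediate top-component dimensions are $1$ and $k+2$ in the correct alternating order so that Lemma \ref{L_new} and its $\Psi^{-1}$-analog apply with the right indices at each step; once this is granted, the remaining calculations are routine bookkeeping.
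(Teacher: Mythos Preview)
Your proposal is correct and follows essentially the same route as the paper: iterate $\Psi^{-1}$ from $\mathcal W_k=L(0,0)$, show via the $h_i$-computation that the top-component dimensions alternate between $1$ and $k+2$, solve the resulting two-step recursion for $(x_n,y_n)$, and verify the vanishing $h_{k+2}(x_{2n},y_{2n})=h_1(x_{2n+1},y_{2n+1})=0$ by direct substitution. The paper presents this more tersely---it simply writes down the recursion (which is exactly your inverted $\hat x_i,\hat y_i$ formulas with $i=1$ and $i=k+2$ alternately) and the $h_i$-identities without the conceptual commentary---but the underlying argument is the same.
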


\begin{proof}

 By direct calculation we have  \bea h_i(x_{2n}, y_{2n}) = 0 &\iff & i \in \{k+2,  1+ n (k+3) \}, \nonumber \\
 h_i(x_{2n-1}, y_{2n-1}) = 0  &\iff &   i \in \{1,  -1-k + n (k+3)  \}. \nonumber \eea

We see that $(x_n,y_n)$ is the unique  solution of the following recursive relations:
 \bea 
&& x_{0} = y_{0} = 0, \nonumber \\
&& x_{2n-1} =    x_{2n} + \frac{2k+3}{3}, \ y_{2n-1} =  y_{2n} +  x_{2n}, \nonumber \\
&& x_{2n-2}  = x_{2n-1}  - \frac{k}{3}, \ y_{2n-2} =   y_{2n-1} +  x_{2n-1}.   \nonumber 
\eea

We have
$$ \Psi^{-1} (L(0,0)) = \Psi^{-1} (\mathcal W_k)= L(\frac{2k+3}{3},0). $$
Then for each $n \in {\Z}_{\le  0}$ we have 
$$L( x_{n}, y_{n}) = \Psi^{n} (L(0,0)) = \Psi^{n} (\mathcal W_k). $$
Hence $L( x_{n}, y_{n})$ is an irreducible $\mathcal W_k$--module.
\end{proof}

 By using     \cite[Theorem 2.15]{Li} (see also  \cite[Proposition 3.1]{AP-2019}) we get:

 \begin{corollary}
In the fusion algebra of $\mathcal W_k$, $L(x_n,y_n)$ are simple-current modules, i.e., the following fusion rules hold:
$$\Psi^{n} (\mathcal W_k) \times  L(x,y) =  L(x_n, y_n) \times L(x,y) = \Psi^{n} (L(x,y)). $$
\end{corollary}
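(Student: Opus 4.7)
The plan is to apply the general simple-current construction of H. Li from \cite[Theorem 2.15]{Li}, in the form adapted to non-integral $h_0$-weights in \cite[Proposition 3.1]{AP-2019}. The statement we need is: if $h \in V$ is an even element satisfying the relations \eqref{delta_1} with $\gamma \in \mathbb{Q}$, and if $h_0$ acts semisimply with rational eigenvalues on the relevant $V$-modules, then the spectral-flow functor $\Psi$ associated to $h$ satisfies
\[
 \Psi(V) \times M \;\cong\; \Psi(M)
\]
for every irreducible $V$-module $M$, and consequently $\Psi(V)$ is a simple current.

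In our setting the role of $h$ is played by $-J$, and the hypotheses \eqref{delta_1} are immediate from the OPE $J(z)J(w) \sim \tfrac{2k+3}{3}(z-w)^{-2}$ together with $L(n)J = \delta_{n,0} J$, giving $\gamma = \tfrac{2k+3}{3}$. Semisimplicity of $J(0)$ with rational eigenvalues on every ordinary irreducible $L(x,y)$ with $(x,y) \in \mathcal S_k$ is part of the definition of an ordinary module, and the explicit recursions from the preceding two lemmas confirm that the relevant values $x_n$ lie in $\mathbb{Q} + \mathbb{Q}\cdot k$.

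Iterating the above isomorphism $n$ times (spectral-flow composition being just addition of exponents) gives
\[
 \Psi^n(\mathcal{W}_k) \times L(x,y) \;\cong\; \Psi^n(L(x,y))
\]
for every $n \in \mathbb{Z}$. Combining this with the identifications $\Psi^n(\mathcal{W}_k) \cong L(x_n,y_n)$ established in the two preceding lemmas immediately yields the stated fusion rule, and since $\Psi^n(L(x,y))$ is again an irreducible $\mathcal{W}_k$-module, $L(x_n,y_n)$ is a simple current. The only point requiring care is to confirm that Li's hypotheses really cover the category in which our irreducible modules live; this is not a genuine obstacle because $\Delta(-J,z)$ is invertible on the module side (with inverse $\Delta(J,z)$), so the identification is at the level of intertwining operators and does not require a full braided tensor category structure.
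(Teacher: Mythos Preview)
Your approach is exactly that of the paper: the corollary is obtained by invoking \cite[Theorem 2.15]{Li} (with the adaptation in \cite[Proposition 3.1]{AP-2019}) and combining it with the identifications $\Psi^n(\mathcal W_k)\cong L(x_n,y_n)$ from the preceding lemmas. One small imprecision: the hypothesis needed for Li's simple-current theorem is that $J(0)$ acts semisimply with \emph{integer} eigenvalues on $\mathcal W_k$ itself (which holds since $G^{\pm}$ have charges $\pm 1$), not that $J(0)$ has rational eigenvalues on the modules $L(x,y)$---indeed $x$ may be an arbitrary complex number---so your sentence about rationality being ``part of the definition of an ordinary module'' should be dropped, but this does not affect the argument.
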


\medskip

\subsection{Modules $L(x,y)$ such that $h_1(x,y) = 0$ or $h_{k+2}(x,y) = 0$}

\begin{theorem} \label{modules-1}
Assume that $h_1(x,y) =0$ or $h_{k+2}(x,y) = 0$. Then $L(x,y)$ is an irreducible ordinary $\mathcal W_k$--module.
\end{theorem}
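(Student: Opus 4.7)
The plan is to combine the spectral-flow family from Lemma~\ref{aut-aplication} with an algebraic continuation argument based on the commutative Zhu algebra $A_\omega(\mathcal{W}^k) \cong \mathbb{C}[x,y]$.

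First, the two preceding lemmas already produce an infinite family of irreducible ordinary $\mathcal{W}_k$--modules $\Psi^n(\mathcal{W}_k) \cong L(x_n, y_n)$, $n \in \mathbb{Z}$, with highest weights given by the explicit closed formulas. Sorting these by parity and by sign of $n$, the points $(x_{2n}, y_{2n})$ for $n \ge 0$ together with $(x_{2n-1}, y_{2n-1})$ for $n < 0$ all lie on the curve $\{h_1 = 0\}$, while the remaining points lie on $\{h_{k+2} = 0\}$. A direct inspection of the formulas for $x_n$ shows that the $x$-coordinates are pairwise distinct in each subfamily, so each of the two curves $\{h_1 = 0\}$ and $\{h_{k+2} = 0\}$ contains infinitely many points $(x,y)$ for which $L(x,y)$ is already known to carry a $\mathcal{W}_k$--module structure.

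Next I would observe that the set
\[
V_k := \{(x,y) \in \mathbb{C}^2 : L(x,y) \text{ is an irreducible ordinary } \mathcal{W}_k\text{-module}\}
\]
is Zariski-closed in $\mathbb{C}^2$. Indeed, using the commutative Virasoro vector $\omega$, the isomorphism $A_\omega(\mathcal{W}^k) \cong \mathbb{C}[x,y]$ together with the fact that $\mathcal{W}_k$ is the simple quotient of $\mathcal{W}^k$ yields $A_\omega(\mathcal{W}_k) \cong \mathbb{C}[x,y]/I$ for an ideal $I$ cut out by the images of singular vectors such as those from Lemma~\ref{integral-sing}; by Zhu's theorem $V_k$ is then precisely the zero locus $V(I)$. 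Since each $h_i(x,y)$ is linear in $y$ with nonzero leading coefficient $k+3$, the curve $\{h_i = 0\}$ is the graph of a polynomial in $x$ and hence irreducible as an algebraic subvariety of $\mathbb{C}^2$. A Zariski-closed subset of an irreducible curve is either finite or equals the whole curve, so the infinite collections exhibited in the previous paragraph force $\{h_1 = 0\} \subseteq V_k$ and $\{h_{k+2} = 0\} \subseteq V_k$. Ordinariness of the resulting highest weight modules is then automatic from part (2) of Proposition~\ref{klas-1}.

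The main obstacle will be the precise bookkeeping in the Zhu-algebra step: verifying that the image in $A_\omega(\mathcal{W}^k) \cong \mathbb{C}[x,y]$ of the defining maximal ideal of $\mathcal{W}_k$ really does cut out $V_k$ as an algebraic variety, rather than merely a subset containing it. Once that identification is in hand, the algebraic continuation reduces to a one-line Nullstellensatz statement on an irreducible plane curve. Note that I work deliberately with the commutative Zhu algebra attached to $\omega$ rather than the noncommutative Smith-type quotient attached to $\overline{\omega}$, since the former is the natural framework for an algebraic-geometric continuation of this kind.
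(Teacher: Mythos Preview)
Your proposal is correct and follows essentially the same approach as the paper: both arguments exploit the commutativity of $A_\omega(\mathcal{W}^k)\cong\mathbb{C}[x,y]$ to recast the question as an algebraic-continuation statement on the curve $h_1=0$, feeding in the infinitely many spectral-flow points from Lemma~\ref{aut-aplication}. The only cosmetic differences are that the paper phrases the continuation as ``a one-variable polynomial with infinitely many zeros vanishes identically'' (after parametrising $y=g_1(x)$ and shifting to $\omega$-coordinates $(x,y+x/2)$), and that it deduces the $h_{k+2}$ case from the $h_1$ case via Lemma~\ref{i+j} rather than from a second infinite family as you do.
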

\begin{proof}
The solution of the equation  $h_1(x,y) = 0$ is 
$$ y= g_1(x) = \frac{-3 x - 2 k x + 3 x^2}{3 + k} \quad (x \in {\C}). $$
So we need to prove that
$L(x,   g_1(x) )$ is an $\mathcal W_k$--module for every $x \in {\C}$.   On the other hand, the Zhu's algebra $A_{\omega} (\mathcal W_k)$ is isomorphic to a certain quotient of ${\C}[x,y]$ by an ideal $I$. 
Since  ${\C}[x,y]$  is Noetherian, the ideal $I$  is finitely generated  by finitely many polynomials, say $P_1, \dots, P_{\ell}$. Hence the highest weights $(x,y)$ of irreducible $\mathcal W_k$--modules are solutions of the equations:
$$P_i (x, y + x/2) = 0, \ \ i= 1, \dots, \ell. $$
It remains to prove that
$$P_i (x, g_1(x) + x/2) = 0, \ \forall x \in {\C}. $$
By Lemma  \ref{aut-aplication}, we have that $L(x, g(x))$ are $\mathcal W_k$--modules for $x=x_{2n} =  - n \frac{k+3}{3}$.
So we have 
$$P_i (x_{2n}, g_1 (x_{2n}) + x_{2n}/2 ) = 0,  \ \ i= 1, \dots, \ell. $$
Since each  $P_i (x, g_1(x) + x/2) $ is a polynomial in one variable and given that it already has infinitely many zeros, it follows that
$P_i (x, g_1(x) + x/2) \equiv  0$. 
This proves that $L(x,   g_1(x) )$  is  $\mathcal W_k$--module, for every $x \in {\C}$. 

Applying Lemma  \ref{i+j}, we get that $L(x,y)$ is a $\mathcal W_k$--module for each solution of the equation $h_{k+2} (x,y) =0$.
\end{proof}

\subsection{ Proof of  Conjecture  \ref{slutnja-1}}

The proof of the conjecture is reduced to the existence of    irreducible  $\mathcal W_k$ modules $L(x,y)$ such that  $h_i(x,y) = 0$ for arbitrary $1 \le i \le k+2$.
 
\begin{lemma} Let $1 \le i \le k+2$. 
There exists an irreducible $\mathcal W_k$--module $L(x^i,y^i)$  with highest weight
$$(x^i,y^i) = \left( \frac{1-i}{3}, \frac{(-1 + i) (-1 + i - k)}{3 (3 + k)} \right) $$ such that
$\dim L(x,y) _{top} = i$.
\end{lemma}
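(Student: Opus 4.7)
The plan is to split the proof into existence and dimension, both controlled by a single polynomial identity at $(x^i,y^i)$.

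First I would compute $h_m(x^i,y^i)$ explicitly. Substituting $x^i = (1-i)/3$ into the defining formula eliminates the quadratic-in-$x$ contribution; plugging in $y^i = (i-1)(i-1-k)/(3(k+3))$ and collecting yields
\[ h_m(x^i,y^i) \;=\; -m^2 + (k+i+2)m - i(k+2) \;=\; -(m-i)\bigl(m-(k+2)\bigr). \]
In particular $h_i(x^i,y^i) = h_{k+2}(x^i,y^i) = 0$ while $h_m(x^i,y^i) \ne 0$ for all $m \in \{1,\dots,k+2\} \setminus \{i, k+2\}$. Since $h_{k+2}(x^i,y^i) = 0$, Theorem~\ref{modules-1} immediately produces the irreducible ordinary $\mathcal{W}_k$-module $L(x^i,y^i)$.

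To determine $\dim L(x^i,y^i)_{\mathrm{top}}$, I would exploit the Smith-algebra structure on the Zhu algebra $A_{\bar\omega}(\mathcal{W}_k)$, which is a quotient of $R(g)$. For a highest weight module over $R(g)$, the commutation $[E,F] = g(X,Y)$ together with the summation formula $\sum_{j=0}^{m-1} g(x+j,y) = m\,h_m(x,y)$ built into the definition of $h_m$ imply by induction that the $F$-chain from the highest weight vector terminates at the smallest $m \ge 1$ for which $h_m$ vanishes at the highest weight. Moreover, the singular vectors $G^+(-1)^{k+2}\mathbbm{1}$ and $G^-(-2)^{k+2}\mathbbm{1}$ of Lemma~\ref{integral-sing} restrict the permissible chain lengths for irreducible $\mathcal{W}_k$-modules to $\{1,\dots,k+2\}$. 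Combined with the polynomial identity above, the smallest such $m$ at $(x^i,y^i)$ is $i$, so $\dim L(x^i,y^i)_{\mathrm{top}} = i$.

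The main obstacle is the convention bookkeeping needed to identify the Smith-algebra chain length with the top dimension at the paper's $(x,y)$, i.e.\ verifying that the identifications $X \leftrightarrow J_0$, $Y \leftrightarrow L(0)$ and the direction of the $F$-chain match up so that the vanishing of $h_m(x,y)$ at the paper's highest weight exactly characterizes when $F^m v_0$ becomes singular. This identification is implicit in the Smith analysis of \cite{AK-2019}.
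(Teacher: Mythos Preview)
Your proposal is correct and follows essentially the same route as the paper: compute the factorization $h_j(x^i,y^i) = (i-j)(j-k-2)$, invoke Theorem~\ref{modules-1} via the root $j=k+2$ to obtain existence, and read off $\dim L(x^i,y^i)_{\mathrm{top}} = i$ from the fact that $i$ is the smallest index in $\{1,\dots,k+2\}$ at which $h_j$ vanishes. The paper states this last step in one line, leaving the Smith-algebra justification implicit (as background from \cite{A1, AK-2019}), whereas you spell out the $F$-chain mechanism; the content is the same.
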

\begin{proof}  Note that
$$ h_j (x^i,y^i) =  (i - j) (-2 + j - k) = 0 \iff j  \in \{ i, k+2\}. $$
By Theorem  \ref{modules-1} we know that $L(x^i,y^i)$ is an $\mathcal W_k$--module. But since $h_i(x^i,y^i)=0$ and $h_{j}(x^i,y^i) \ne 0$ for $j <i$, we conclude that $\dim L(x^i,y^i) _{top} = i$.
The proof follows.
\end{proof}

Now we shall continue as in the previous subsection.  We will apply the automorphism $\Psi$ and construct  new infinite family of $\mathcal W_k$--modules.

\begin{lemma} \label{beskonacno-i} For each $n \in {\Z}_{\ge 0}$,
$\Psi^n (L(x^i,y^i)) = L (x^i_n, y^i_n)$, where
\bea
 x^i _{2n} &=& \frac{1-i}{3} - n \frac{k+3}{3}    \nonumber \\  y^i _{2 n} &=&  \frac{ 1 - 2 i + i^2 + k - i k + 12 n - 3 i n + 10 k n - i k n + 2 k^2 n + 
 9 n^2 + 6 k n^2 + k^2 n^2}{3 (3 + k) }\nonumber  \\
 x^i _{2n+1} &=&  \frac{-5 + 2 i - 3 n - k (2 + n)}{3}   \nonumber \\  y^i _{2 n+1} &=&   \frac{16 - 8 i + i^2 + 12 k - 3 i k + 2 k^2 + 21 n - 3 i n + 16 k n - 
 i k n + 3 k^2 n + 9 n^2 + 6 k n^2 + k^2 n^2}{3 (3 + k)} \nonumber  \eea
\end{lemma}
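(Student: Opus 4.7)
\emph{Proof plan.} The argument goes by induction on $n$, iteratively applying the spectral flow $\Psi$ and Lemma \ref{L_new}. The key observation is that the top dimension of $L(x^i_n, y^i_n)$ alternates: it equals $i$ for even $n$ and $k+3-i$ for odd $n$. This controls the specific shift prescribed by Lemma \ref{L_new} at each step, and hence the two distinct recursive formulas for even and odd $n$ in the statement.

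For the base case $n=0$, substitution of $n=0$ into the even-indexed formulas immediately recovers $(x^i_0, y^i_0) = (x^i, y^i)$, and by the preceding lemma $\Psi^0 L(x^i,y^i) = L(x^i,y^i)$ has top dimension $i$. Now assume by induction that $\Psi^{2m} L(x^i,y^i) \cong L(x^i_{2m}, y^i_{2m})$ with top dimension $i$. Applying Lemma \ref{L_new} with parameter $i$, we conclude that $\Psi^{2m+1} L(x^i,y^i)$ has highest weight
$$ \left( x^i_{2m} + i - 1 - \tfrac{2k+3}{3}, \ y^i_{2m} - x^i_{2m} - i + 1 + \tfrac{2k+3}{3} \right). $$
Substituting the closed-form expressions for $x^i_{2m}, y^i_{2m}$ and collecting terms reproduces the stated formulas for $(x^i_{2m+1}, y^i_{2m+1})$.

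To continue the induction, we need $\dim L(x^i_{2m+1}, y^i_{2m+1})_{\mathrm{top}} = k+3-i$. Lemma \ref{i+j} immediately yields
$$ h_{k+3-i}(x^i_{2m+1}, y^i_{2m+1}) = h_i(x^i_{2m}, y^i_{2m}) = 0, $$
so the top dimension is at most $k+3-i$; equality is then confirmed by substituting the closed form into $h_j$ for $j < k+3-i$ and using $1 \le i \le k+2$ to see that $h_j \ne 0$. Applying Lemma \ref{L_new} to $L(x^i_{2m+1}, y^i_{2m+1})$ with parameter $k+3-i$ gives
$$ \left( x^i_{2m+1} + (k+3-i) - 1 - \tfrac{2k+3}{3}, \ y^i_{2m+1} - x^i_{2m+1} - (k+3-i) + 1 + \tfrac{2k+3}{3} \right), $$
and simplification produces the stated formulas for $(x^i_{2m+2}, y^i_{2m+2})$, completing the induction.

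The main obstacle is the bookkeeping required to verify the top dimension at the odd intermediate step. Although $h_{k+3-i}$ vanishes automatically via Lemma \ref{i+j}, one must rule out the vanishing of $h_j$ at $(x^i_{2m+1}, y^i_{2m+1})$ for smaller $j$. This reduces to a polynomial identity in $i, n, k$ obtained by plugging the closed forms into $h_j$ and factoring, but the computation, while routine, is the most delicate part of the argument.
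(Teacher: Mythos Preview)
Your proposal is correct and follows essentially the same approach as the paper: both arguments rest on Lemma~\ref{L_new} together with the determination of $\dim L(x^i_n,y^i_n)_{\mathrm{top}}$ via the polynomials $h_j$, yielding the same two-step recursion. The only organizational difference is that the paper directly factors $h_j(x^i_{2n},y^i_{2n})=(i-j)(-2+j-3n-k(1+n))$ and $h_j(x^i_{2n+1},y^i_{2n+1})=-(-3+i+j-k)(-5+i+j-2k-3n-kn)$, which simultaneously gives the vanishing and non-vanishing needed for the top dimensions, whereas you obtain the vanishing via Lemma~\ref{i+j} and leave the non-vanishing (and the analogous check at the even step $2m+2$) to the same polynomial substitution.
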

\begin{proof}
Using direct calculation we get
$$h_{j} (x_{2n}, y_{2n}) = (i - j) (-2 + j - 3 n - k (1 + n)), $$
$$h_{j} (x_{2n+1}, y_{2n+1}) = -(-3 + i + j - k) (-5 + i + j - 2 k - 3 n - k n),$$
which implies that 
$$ \dim L(x_{2n}, y_{2n})_{top} = i, \ \dim   L(x_{2n+1}, y_{2n+1})_{top} = k+3-i. $$

We see that $(x^i_n,y^i_n)$ is the unique  solution of the following recursive relations:
 \bea 
&& x^i_{0} = x^i,  y^i_{0} = y^i, \nonumber \\
 && x^i_{2n+1} =    x^i_{2n}  + (i-1) - \frac{2k+3}{3}, \ y^i_{2n+1} =   y^i_{2n} -  x^i_{2n+1},  \nonumber \\
&& x^i_{2n+2} =   x^i_{2n+1}  +   k+ 2 -i - \frac{2k+3}{3}, \ y_{2n+2} = \  y^i_{2n+1} -  x^i_{2n+2}.   \nonumber 
\eea
This proves that  for each $n \in {\Z}_{\ge 0}$ we have 
$$L( x^i_{n}, y^i_{n}) = \Psi^{n} (L(x^i,y^i)). $$
Since $L(x^i,y^i)$ is a $\mathcal W_k$--module, we have that  $L( x^i_{n}, y^i _{n})$ is an  irreducible $\mathcal W_k$--module. The proof follows.
\end{proof}

\begin{theorem} \label{modules-general}
Assume that $h_i (x,y) =0$ for $1 \le i \le k+2$. Then $L(x,y)$ is an irreducible ordinary $\mathcal W_k$--module.
\end{theorem}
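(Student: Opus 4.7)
The plan is to reproduce the algebraic continuation argument used in the proof of Theorem \ref{modules-1}, now carried out on the curve $h_i(x,y) = 0$ for each fixed $i$ with $1 \le i \le k+2$. Since $h_i(x,y)$ is linear in $y$ with coefficient $k+3 \neq 0$, solving $h_i(x,y) = 0$ yields $y = g_i(x)$ for an explicit polynomial $g_i \in \C[x]$. The goal is therefore to show that $L(x, g_i(x))$ is an irreducible $\mathcal W_k$-module for every $x \in \C$.

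First, I would extract from Lemma \ref{beskonacno-i} an infinite family of genuine $\mathcal W_k$-modules whose highest weights lie on this curve. Concretely, for every $n \in \Z_{\geq 0}$ the module $\Psi^{2n}(L(x^i,y^i)) = L(x^i_{2n}, y^i_{2n})$ is an irreducible $\mathcal W_k$-module with $\dim L(x^i_{2n}, y^i_{2n})_{\mathrm{top}} = i$; in particular $h_i(x^i_{2n}, y^i_{2n}) = 0$, so $y^i_{2n} = g_i(x^i_{2n})$. Moreover, the $x$-coordinates $x^i_{2n} = \tfrac{1-i}{3} - n \tfrac{k+3}{3}$ form an arithmetic progression with nonzero common difference $-\tfrac{k+3}{3}$ (since $k \geq 1$), so as $n$ ranges over $\Z_{\geq 0}$ they supply infinitely many distinct complex numbers.

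Next, exactly as in the proof of Theorem \ref{modules-1}, the Zhu algebra $A_{\omega}(\mathcal W_k)$ is a quotient $\C[x,y]/I$. Since $\C[x,y]$ is Noetherian, we may write $I = (P_1,\dots,P_\ell)$ for finitely many polynomials $P_j$. The highest weights of irreducible $\mathcal W_k$-modules are then the solutions of the system $P_j(x, y + x/2) = 0$, $j = 1, \dots, \ell$. Substituting $y = g_i(x)$, each $P_j(x, g_i(x) + x/2)$ becomes a polynomial in the single variable $x$; by the previous paragraph it vanishes at all infinitely many points $x = x^i_{2n}$, and hence vanishes identically. Therefore $L(x, g_i(x))$ is an irreducible $\mathcal W_k$-module for every $x \in \C$, which is precisely the claim for the curve $h_i(x,y) = 0$.

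The only real obstacle is producing an infinite family of bona fide $\mathcal W_k$-modules whose highest weights lie on $h_i = 0$; this is exactly what Lemma \ref{beskonacno-i} supplies via the spectral flow applied to the carefully chosen seed $L(x^i,y^i)$ (designed so that $\dim L(x^i,y^i)_{\mathrm{top}} = i$). Once that family is in hand, the theorem follows by a one-variable polynomial identity, and Conjecture \ref{slutnja-1} is proved by ranging $i$ over $\{1, 2, \dots, k+2\}$.
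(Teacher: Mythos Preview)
Your proposal is correct and follows essentially the same approach as the paper: invoke Lemma~\ref{beskonacno-i} to produce infinitely many $\mathcal W_k$--modules $L(x^i_{2n}, y^i_{2n})$ on the curve $h_i = 0$, then repeat verbatim the Noetherian/Zhu-algebra argument from the proof of Theorem~\ref{modules-1}. The paper's own proof is a two-line reference to exactly these two ingredients; you have simply written out the details (including the observation that the $x^i_{2n}$ are distinct), which is a faithful expansion of what the paper intends.
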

\begin{proof}
By Lemma \ref{beskonacno-i} we have that  $L(x^i_{2n}, y^i_{2n})$ are $\mathcal W_k$--modules for every $n \in {\Z}_{\ge 0}$. 
Since $h_{i} (x^i_{2n}, y^i_{2n}) =0$, we conclude that there is an infinite-family of points $(x,y)$  of the curve $h_{i}(x,y)=0$ such that $L(x,y)$ is a $\mathcal W_k$--module. Applying the same argument as in the proof of Theorem  \ref{modules-1}, we get that $L(x,y)$ is a $\mathcal W_k$--module for every $(x,y)$ such that $h_i(x,y) =0$.
\end{proof}

 Theorem \ref{modules-general} concludes the proof of Conjecture \ref{slutnja-1}.

\section{The duality of  $\mathcal W_1$  and $L_{-5/4} (osp(1 \vert 2))$.}
\label{duality-section}

In this section we construct an embedding of the Bershadsky-Polyakov vertex algebra $\mathcal W_1$ into the tensor product of the affine vertex {super}algebra $L_{k} (osp(1 \vert 2))$ at level $k=-5/4$ and the Clifford vertex superalgebra $F$.
The affine vertex {super}algebra  $V^k(osp(1 \vert 2)) $ and its modules were realized by the first named author in \cite{A-2019}.  Moreover, we will  show that $L_{-5/4} (osp(1 \vert 2))$ is the Kazama-Suzuki dual of  $\mathcal W_1$ by proving that there is an embedding of 
$\mathcal W_1$ into $F_{-1}$, where $F_{-1}$ is the vertex {super}algebra associated to the lattice ${\Z} \sqrt{-1}$.

\subsection{ Affine vertex superalgebra $V^k(osp(1 \vert 2)) $} \label{sec:realizacija}
Recall that $\g = osp(1,2)$ is the simple complex Lie superalgebra   with basis $\{e,f, h, x, y\}$ such that the even part
$\g^{0} = \mbox{span}_{\C} \{ e, f, h \}$ and the  odd part $\g^{1}  =\mbox{span} _{\C} \{x, y \}$.
The anti-commutation relations are given by
\bea
 && [e,f] = h, \ [h, e] = 2 e, \ [h, f] = - 2 f \nonumber \\
 && [h, x] = x, \ [ e, x] = 0,  \ [f, x] = -y \nonumber \\
 && [h, y] = -y, \ [e, y] =-x \ [f, y] = 0 \nonumber \\
 && \{ x, x\} = 2 e, \ \{x, y\} = h, \ \{ y, y \} = -2 f. \nonumber
\eea
Choose the non-degenerate super-symmetric bilinear form  $ ( \cdot , \cdot )$ on $\g$ such that non-trivial products are given by
$$ (e, f) = (f,e) = 1, \ (h,h) = 2, \ (x, y) =  -(y, x) = 2. $$
Let ${\widehat \g} = {\g} \otimes {\C}[t, t^{-1}] + {\C} K$ be the associated affine Lie superalgebra, and $V^k (\g)$ (resp. $L_k (\g)$) the associated universal (resp. simple) affine vertex superalgebra.
As usual, we identify  $x \in {\g}$ with $x(-1) {\bf 1}$.

The Sugawara conformal vector of $V^k(osp(1 ,2))$ is given by $$ \omega_{\text{sug}} = \frac{1}{2K+3} \left(:ef: + :fe: + \frac{1}{2}:hh: - \frac{1}{2}:xy: + \frac{1}{2}:yx: \right). $$

{ 

 The notion of Ramond--twisted modules is defined as usual in the case of affine vertex superalgebras  \cite{KW-tw} (see also \cite{SRW}). 

Recall also the spectral flow automorphism $\rho$ for $\widehat{osp}(1 \vert 2)$:
 \bea &&   \rho e(n) = e(n-2),  \quad \rho x(n) = x(n-1),   \quad \rho f(n) = f(n+2), \nonumber \\ &&  \rho y(n) = y(n+1)  \quad   \rho h(n) = h(n) -2 \delta_{n,0} K. \quad (n \in {\Z}). \nonumber \eea

As in the case of the automorphism $\Psi $ of the $\mathcal W_k$, one shows  that for any $L_k (osp(1 \vert 2))$--module $(M, Y_M(\cdot, z))$ and $n \in {\Z}$,
$\rho ^ n (M)$ is again a $L_k (osp(1 \vert 2))$--module with vertex operator structure given by
$$  Y_{\rho ^n (M) } ( \cdot , z))  := Y_{ M } (\Delta ( -n h, z) \cdot , z)). $$
 (see also  \cite[Proposition 2.1]{A-2007} for the proof of similar statement for spectral-flow automorphism of $L_k(sl(2))$, and \cite[Proposition 3.1]{AP-2019} in the case of $\beta-\gamma$ system).

}

\subsection{Embedding of  $\mathcal W_1$ into  $ L_{-5/4} (osp(1 \vert 2)) \otimes F$}

The free field realization of $L_k(osp(1 \vert 2))$ is presented in \cite{A-2019}. In what follows, we will show that the simple Bershadsky-Polyakov algebra $\mathcal W_1$ can be embedded into $ L_{-5/4} (osp(1 \vert 2)) \otimes F$, where $ F $ is the Clifford vertex superalgebra, introduced in Section
\ref{clifford-cijeli}.
 
 \medskip 

Set 
\begin{align*}
    \tau^+ &= : \Psi^+  x:  &\\
	\tau^-&= : \Psi^-  y :. &
\end{align*}

Let $\alpha = :\Psi^+ \Psi^-:$. Then 
$$ \omega_F = \frac{1}{2} :\alpha \alpha: . $$

Denote  by $M_{h ^{\perp}}  (1)$  the Heisenberg vertex algebra generated by $ h ^{\perp} = \alpha - h$.  For $s \in {\C}$, denote by $M_{h ^{\perp}}  (1, s)$   the irreducible  $M_{h ^{\perp} }  (1)$--module on which $h ^{\perp} (0)$ acts as $s  \mbox{Id}$.
Note that
$h^{\perp} (1) h^{\perp} = -\tfrac{3}{2} {\bf 1}$. The Virasoro vector of central charge $c=1$ in $M_{h ^{\perp}}  (1)$  is
$\omega^{\perp} = -\frac{1}{3} h^{\perp} (-1) ^2{\bf 1}$.
 
The following lemma can be proved easily using results from \cite{Li, Li-ext}.
\begin{lemma}  \label{heis-sf} Consider the $M_{h ^{\perp}}  (1)$--module $M_{h ^{\perp}}  (1, s)$ with the vertex operator $Y_s(\cdot, z)$.
Then for every $n \in {\Z}$
$$ (\widetilde{M_{h ^{\perp}}  } (1, s), \widetilde{Y_s} (\cdot, z)):= (M_{h ^{\perp}}  (1, s),
 Y_s(\Delta (-\frac{2 n}{3}h ^{\perp}, z) \cdot, z))$$
 is an irreducible $M_{h ^{\perp}}  (1)$--module isomorphic to $M_{h ^{\perp}}  (1, n+ s)$.
\end{lemma}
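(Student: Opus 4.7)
The plan is to invoke Proposition \ref{li-twisted} for the vertex algebra $V = M_{h^{\perp}}(1)$ with the distinguished element $h := -\tfrac{2n}{3} h^{\perp}$, and then identify the resulting module structure using the classification of Heisenberg Fock modules by their zero-mode eigenvalue.

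First I would check the hypotheses of Proposition \ref{li-twisted}. Since $h^{\perp}$ is a Heisenberg generator of conformal weight one, one has $L(m) h = \delta_{m,0} h$ and
$$ h_m h = \tfrac{4n^2}{9}\, h^{\perp}_m h^{\perp} = -\tfrac{2n^2}{3}\, \delta_{m,1}\, {\bf 1}, $$
so that the second hypothesis holds with $\gamma = -\tfrac{2n^2}{3}$. Moreover, $h_0 = -\tfrac{2n}{3}\, h^{\perp}(0)$ acts as zero on the vacuum module $V$, so $\sigma_h = e^{2\pi i h_0} = \mathrm{Id}$ on $V$, and the $\sigma_h$-twisted module produced by Li's construction is automatically an ordinary (untwisted) $M_{h^{\perp}}(1)$-module.

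Next I would compute the new action of the Heisenberg field. Using $h^{\perp}(k) h^{\perp} = -\tfrac{3}{2}\, \delta_{k,1}\, {\bf 1}$ for $k \geq 1$ together with $h^{\perp}(k) {\bf 1} = 0$ for $k \geq 0$, the expansion of the $\Delta$-operator acting on $h^{\perp}$ truncates to a single nontrivial term, yielding
$$ \Delta\!\left(-\tfrac{2n}{3} h^{\perp}, z\right) h^{\perp} = h^{\perp} + n z^{-1}\, {\bf 1}, $$
and hence
$$ \widetilde{Y_s}(h^{\perp}, z) = Y_s(h^{\perp}, z) + n z^{-1}\, \mathrm{Id}. $$
Extracting the coefficient of $z^{-1}$ shows that the new zero mode of $h^{\perp}$ acts on the top component of $M_{h^{\perp}}(1,s)$ as $(s + n)\,\mathrm{Id}$.

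Finally, since an irreducible lowest-weight module for the rank-one Heisenberg algebra $M_{h^{\perp}}(1)$ is determined up to isomorphism by the eigenvalue of the zero mode, this identification forces $\widetilde{M_{h^{\perp}}}(1, s) \cong M_{h^{\perp}}(1, n+s)$. The only step that requires any genuine computation is the truncation of $\Delta(-\tfrac{2n}{3} h^{\perp}, z) h^{\perp}$ to the explicit form above, which is where I would be most careful; everything else is a direct application of the general formalism of \cite{Li, Li-ext}.
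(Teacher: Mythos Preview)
Your proposal is correct and is precisely the approach the paper has in mind: the paper does not give a proof at all, merely stating that the lemma ``can be proved easily using results from \cite{Li, Li-ext},'' and your argument is exactly the routine verification this sentence alludes to. The one point you leave implicit is that the resulting module is still irreducible; this follows because Li's $\Delta$-operator construction is invertible (apply $\Delta(\tfrac{2n}{3}h^{\perp},z)$ to undo it), so submodules correspond bijectively, but you may want to say this explicitly before invoking the classification of Heisenberg Fock modules.
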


\begin{theorem}  \label{real-1}
\begin{itemize}
\item[(i)] There is a non-trivial homomorphism of vertex algebras $ \Phi : \mathcal W^1 \rightarrow L_{-5/4} (osp(1 \vert 2)) \otimes F$ uniquely determined by
\bea
G^+ &=& 2 \tau^+ =  2 : \Psi^+  x:  \nonumber \\
G^{-} & =& 2 \tau ^- =  2  : \Psi^-  y : \nonumber \\
J &=& \frac{5}{3} \alpha - \frac{2}{3} h \nonumber \\
 T &=& \omega_{sug}  + \omega_F -\omega^{\perp}. \nonumber \eea

\item[(ii)]  $\mbox{Im} (\Phi) $ is isomorphic to the simple vertex algebra $\mathcal W_1$.

\item[(iii)]
  As a  $ {\mathcal W}_1 \otimes  M_{h ^{\perp}}  (1)$-module:
\bea   L_{-5/4} (osp(1 \vert 2)) \otimes F & \cong&  \bigoplus_{n\in {\Z} } 
 \Psi^{-n} (   {\mathcal W}_1)  \otimes   M_{h ^{\perp}}   (1, n)  \nonumber\\
 &=&  \bigoplus_{n\in {\Z} } 
L( x_{n}, y_{n} ) \otimes   M_{h ^{\perp}}   (1,   - n ). \nonumber
 \eea
 \item[(iv)] We have:
 $$ \mbox{Com} (M_{h^{\perp}}(1), L_{-5/4} (osp(1 \vert 2)) \otimes F) \cong   {\mathcal W}_1. $$
 \end{itemize}

\end{theorem}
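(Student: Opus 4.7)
I would establish the four parts in order, leveraging the tensor-factor structure of $L_{-5/4}(osp(1\vert 2))\otimes F$ and the classification of irreducible $\mathcal W_1$--modules from Theorem~\ref{klas}.

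\emph{Part (i)} is a direct OPE verification. Since $\alpha\in F$ and the generators of $osp(1\vert 2)$ lie in commuting tensor factors, only self-contractions within each factor contribute. The identities $h(1)h=2k'\mathbf 1=-\tfrac{5}{2}\mathbf 1$ and $\alpha(1)\alpha=\mathbf 1$ immediately give $J(z)J(w)\sim\tfrac{5}{3}(z-w)^{-2}$ as required at $k=1$; the charge assignments $\alpha(0)\Psi^{\pm}=\pm\Psi^{\pm}$, $h(0)x=x$, $h(0)y=-y$ produce $J(z)G^{\pm}(w)\sim\pm G^{\pm}(w)(z-w)^{-1}$ and $G^{\pm}(z)G^{\pm}(w)\sim 0$ (the latter from $\Psi^{\pm}(z)^2=0$ combined with $(x,x)=(y,y)=0$). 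The delicate step is $G^+(z)G^-(w)$: the super-sign from anticommuting $x(z)$ past $\Psi^-(w)$ yields $\tau^+(z)\tau^-(w)=-\Psi^+(z)\Psi^-(w)\cdot x(z)y(w)$; expanding with $\Psi^+(z)\Psi^-(w)\sim(z-w)^{-1}$ and $x(z)y(w)\sim -\tfrac{5}{2}(z-w)^{-2}+h(w)(z-w)^{-1}$ produces singular coefficients $\tfrac{5}{2}$, $-h+\tfrac{5}{2}\alpha$, $-{:}xy{:}+\tfrac{5}{2}{:}\partial\Psi^+\Psi^-{:}-\alpha h$ at $(z-w)^{-3}, (z-w)^{-2}, (z-w)^{-1}$ respectively, which after multiplication by $4$ match $10$, $6J=10\alpha-4h$, and $3{:}JJ{:}+3DJ-4T$ upon invoking the conformal embedding identity $\omega_{\mathrm{sug}}^{osp(1\vert 2)}=\omega_{\mathrm{sug}}^{sl(2)}$ in $L_{-5/4}(osp(1\vert 2))$ to rewrite $:xy:$ in terms of $sl(2)$--currents. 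The Virasoro-type OPEs follow from $T=\omega_{\mathrm{sug}}+\omega_F-\omega^{\perp}$ and the fact that $\omega^{\perp}$ commutes with $J,G^{\pm}$.

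\emph{Part (ii)}. By Lemma~\ref{integral-sing} at $k=1$, the vectors $G^+(-1)^3\mathbf 1$ and $G^-(-2)^3\mathbf 1$ are singular in $\mathcal W^1$ and generate its maximal proper ideal $\mathcal J$. I would verify that both images under $\Phi$ vanish: the iterated normal-ordered products $(\tau^{\pm})_{-1}^3\mathbf 1$ partially collapse via $\Psi^{\pm}(z)^2=0$, and the residual $osp$-factor reduces to a triple odd product that is already a null vector of the simple quotient $L_{-5/4}(osp(1\vert 2))$. Hence $\mathcal J\subseteq\ker\Phi$, so $\Phi$ descends to $\overline\Phi\colon\mathcal W_1\to L_{-5/4}(osp(1\vert 2))\otimes F$; simplicity of $\mathcal W_1$ forces $\overline\Phi$ to be injective, giving $\mathrm{Im}(\Phi)\cong\mathcal W_1$.

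\emph{Parts (iii) and (iv).} Since $J(1)h^{\perp}=\tfrac{5}{3}\alpha(1)\alpha+\tfrac{2}{3}h(1)h=\tfrac{5}{3}-\tfrac{5}{3}=0$, the Heisenberg subalgebra $M_{h^{\perp}}(1)$ commutes with $\Phi(\mathcal W_1)$. Because $h^{\perp}(0)$ acts semisimply with integer eigenvalues on $L_{-5/4}(osp(1\vert 2))\otimes F$, the standard Heisenberg coset decomposition gives
\[
L_{-5/4}(osp(1\vert 2))\otimes F \ \cong\ \bigoplus_{n\in\Z}W_n\otimes M_{h^{\perp}}(1,n)
\]
for some $\mathcal W_1$--modules $W_n$. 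Part (iv) is then immediate: a vector lies in $\mathrm{Com}(M_{h^{\perp}}(1),\cdot)$ iff its $h^{\perp}(0)$--eigenvalue is zero and it is annihilated by the positive modes of $h^{\perp}$, singling out $W_0\otimes\C\mathbf 1\cong W_0=\mathcal W_1$. To identify $W_n\cong\Psi^{-n}(\mathcal W_1)=L(x_{-n},y_{-n})$ in (iii), I would exhibit explicit $\mathcal W_1$--highest weight vectors in each $h^{\perp}$--eigenspace with the prescribed $(J(0),L(0))$--eigenvalues: the vector $\mathbf 1\otimes\Psi^-(-\tfrac12)\mathbf 1$ realizes $n=-1$ with $(-\tfrac{5}{3},\tfrac{5}{3})=(x_1,y_1)$, and $e(-1)\mathbf 1\otimes\mathbf 1$ realizes $n=-2$ with $(-\tfrac{4}{3},3)=(x_2,y_2)$. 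The general case is produced uniformly by combining the spectral flow automorphism $\rho$ of $L_{-5/4}(osp(1\vert 2))$ with Clifford shifts in $F$, which under $\Phi$ realize the $\Psi$--action on $\mathcal W_1$ through Li's $\Delta$--operator (cf.\ Lemma~\ref{heis-sf}); the classification Theorem~\ref{klas} then forces $W_n\cong L(x_{-n},y_{-n})$ since the highest weight uniquely determines the irreducible module in that list.

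\emph{Main obstacle.} The principal difficulty lies in (iii): matching $(J(0),L(0))$--eigenvalues of ad hoc highest weight vectors is routine for small $|n|$, but confirming uniformly for \emph{every} $n$ that the entire $h^{\perp}$--eigenspace $W_n$ is exactly the simple-current module $\Psi^{-n}(\mathcal W_1)$ (rather than some proper submodule or extension) requires establishing a systematic spectral-flow correspondence between $\Psi$ on the $\mathcal W_1$--side and the combined $\rho$--action plus Heisenberg shift on the $L_{-5/4}(osp(1\vert 2))\otimes F$--side via Li's $\Delta$--operator; this correspondence is the technical heart of the duality.
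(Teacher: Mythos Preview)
Your approach to part (i) is essentially the paper's: a direct OPE check, using the conformal embedding $\omega_{\mathrm{sug}}^{osp(1\vert 2)}=\omega_{\mathrm{sug}}^{sl(2)}$ at $k'=-5/4$ to simplify the $G^+G^-$ computation. That is fine.

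The genuine gap is in part (ii). Your argument rests on two claims, neither of which is justified. First, you assert that the singular vectors $G^+(-1)^3\mathbf 1$ and $G^-(-2)^3\mathbf 1$ \emph{generate the maximal proper ideal} of $\mathcal W^1$. Lemma~\ref{integral-sing} only says they are singular; nothing in \cite{AK-2019} or in this paper establishes that the ideal they generate is maximal, and the module classification (Theorem~\ref{klas}) does not by itself determine the ideal lattice of $\mathcal W^1$. Without this, showing $\mathcal J\subseteq\ker\Phi$ only gives a map from an intermediate (possibly non-simple) quotient, from which injectivity cannot be deduced. Second, your sketch of why $\Phi(G^+(-1)^3\mathbf 1)=0$ is too loose: the identity ``$\Psi^{\pm}(z)^2=0$'' does \emph{not} kill vectors such as $\Psi^+(-\tfrac32)\Psi^+(-\tfrac12)\mathbf 1$, so the triple normal-ordered product $(:\Psi^+x:)_{-1}^3\mathbf 1$ does not ``partially collapse'' in the way you suggest, and the residual $osp$-factor is not simply $x(-1)^3\mathbf 1$.

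The paper avoids both issues by proving simplicity of $\mathrm{Im}(\Phi)$ \emph{directly}, and in doing so obtains (ii) and (iii) simultaneously rather than sequentially. The key step is the factorization
\[
\Delta(\alpha,z)=\Delta\bigl(-\tfrac{2}{3}h^{\perp},z\bigr)\,\Delta(J,z),
\]
coming from $\alpha=J-\tfrac{2}{3}h^{\perp}$. Applying $\Delta(n\alpha,z)$ to $U:=\overline{\mathcal W}_1\otimes M_{h^{\perp}}(1)$ produces the simple-current modules $U^{(n)}\cong\Psi^{-n}(\overline{\mathcal W}_1)\otimes M_{h^{\perp}}(1,n)$, which via the boson--fermion correspondence sit inside $L_{-5/4}(osp(1\vert 2))\otimes F$ as $U\cdot e^{n\alpha}$. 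One then checks that $\bigoplus_n U^{(n)}$ contains all generators of $L_{-5/4}(osp(1\vert 2))\otimes F$, hence equals it; restricting to the $h^{\perp}(0)$-kernel gives $U=\overline W$, which is simple because $L_{-5/4}(osp(1\vert 2))\otimes F$ is simple. This forces $\overline{\mathcal W}_1\cong\mathcal W_1$, and the decomposition (iii) is already in hand. Your ``main obstacle'' paragraph correctly identifies Li's $\Delta$-operator as the mechanism, but you frame it on the $osp\otimes F$ side (via $\rho$ and Clifford shifts); the paper's formulation on the $\mathcal W_1\otimes M_{h^{\perp}}(1)$ side via $\alpha=J-\tfrac{2}{3}h^{\perp}$ is what makes the argument go through uniformly in $n$ and, crucially, is what delivers (ii) without any appeal to the ideal structure of $\mathcal W^1$.
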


\begin{proof}
Let $k=1$.
Using  the formula $$(\Psi^+_{-1}x)_{n} = \sum_{i=0}^{\infty}\left(\Psi^+_{-1-i}x(n+i) - x(n-i-1)\Psi^+_{i}\right) $$  we obtain
\begin{align*}
    \tau^+_2\tau^- &= -2k'\mathbbm{1}, &\\
		\tau^+_1\tau^-&= -2k'\alpha - h, &\\
			\tau^+_0\tau^-&= -\alpha h - :xy: - 2k' :D \Psi^+\Psi^-:. &
\end{align*}


From the above formulas and OPE relations for $\mathcal W^k$ it follows that for $k=1$, $k'$ needs to be equal to $ -5/4$. For level $k'=-5/4$  there exists a conformal embedding of $L_{k'}(sl(2))$ into $L_{k'}(osp(1 \vert 2))$ (cf. \cite{AKMPP-2020}), that is, $$ \omega_{\text{sug}} = \omega_{sl(2)}, $$ where  $$ \omega_{sl(2)} = \frac{1}{2(k'+2)} \left(:ef: + :fe: + \frac{1}{2}:hh: \right). $$
We have
\begin{align*}
G^+ _2 G^- & = 4 \tau^+_2 \tau^-= 10 \mathbbm{1} = (k+1) (2k+3) \mathbbm{1}, &\\
G^+ _1 G^- &=   4 \tau^+_1 \tau^- =  - 4 ( - \frac{5}{2} \alpha + h) = 10 \alpha - 4 h = 6 J= 3 (k+1)J, &\\
G^+ _0 G^-  &=  4 \tau^+_0 \tau^-=- 4  : h \alpha : - 4 : x y: + 5 :\alpha \alpha: + 5 D \alpha. & \end{align*}

\smallskip

Using the realization in \cite{A-2019} and the fact that there is a conformal embedding of $L_{-5/4}(sl(2))$ into $L_{-5/4}(osp(1 \vert 2))$, it follows that
$$\omega_{sug} =  \frac{1}{2} ( : x y : -  y x :)  =  : x y: -\frac{1}{2} D h. $$
This implies that
\bea  T  &=&   : x y: -\frac{1}{2} D h + \frac{1}{2} : \alpha \alpha: + \frac{1}{3}   (: \alpha \alpha: + : h h : - 2 :\alpha h:)  \nonumber \\
&=&  : x y:  -\frac{1}{2} D h   + \frac{5}{6} : \alpha \alpha:  + \frac{1}{3} : h h: - \frac{2}{3} : \alpha h : \nonumber 
\eea
Hence
\bea
G^+ _0 G^-  &=&  - 4  : h \alpha : - 4 : x y: + 5 :\alpha \alpha: + 5 D \alpha \nonumber \\
& =& 3 : J ^ 2 : + 3 : D J: - 4 T \nonumber \\
&=& 3 : J ^ 2 : + \frac{3(k+1)} {2} : D J: - (k+3) T. \nonumber
\eea
This proves assertion (i).

\medskip

Let us prove that   $\overline{\mathcal W}_1= \mbox{Im} (\Phi) $ is simple.   

Let $\overline{W} = \mbox{Ker} _{   L_{-5/4} (osp(1, 2)) \otimes F} h ^{\perp} (0)$. It is clear that $\overline{W}$ is a simple vertex algebra which contains
$ \overline{\mathcal W}_1 \otimes  M_{h ^{\perp}}  (1)$.

The simplicity of   $ \overline{\mathcal W}_1$ follows from the following claim:
\begin{itemize}
\item[\bf Claim 1]  $\overline W$ is generated by $ G^+, G^-, J, T,  h ^{\perp}$.
\end{itemize}

 For completeness, we shall include a proof of the Claim 1.

Let $U$ be the vertex subalgebra of $\overline W$ generated by $ \{G^+, G^-, J, T,  h ^{\perp} \}$. Then clearly $U \cong \overline{\mathcal W}_1 \otimes  M_{h ^{\perp}}  (1)$.
 
Let $ U^{(n)} $ be the $U$--module obtained by the simple current construction
$$ (U^{(n)}, Y^{(n)} (\cdot, z)): = (U , Y(\Delta(n \alpha,) \cdot, z)). $$

 Note next that
$ \alpha  = J- \frac{2}{3} h^{\perp}$, which implies that 
\bea
 \label{delta-alpha}  \Delta(\alpha, z) = \Delta( -\frac{2}{3} h^{\perp}, z) \Delta(J, z).
   \eea

 \begin{itemize}
\item For a $ \overline{\mathcal W}_1$--module $W$, 
by applying operator $\Delta(n J, z)$  we get module $\Psi^{-n}(W)$.
\item  Using Lemma \ref{heis-sf} we see that  by applying the  operator $ \Delta( -\frac{2  n }{3} h^{\perp}, z)$ on 
 $ M_{h^{\perp}}(1)$,   we get module $ M_{h^{\perp}}(1)$--module
$ M_{h^{\perp}} (1,  n )$.
\end{itemize}
We get
\bea 
U^{(n)}= \Psi^{-n}(  \overline{\mathcal W}_1) \otimes   M_{h ^{\perp}}  (1,n ).
\label{dec-iii}
\eea

 Since by the boson-fermion correspondence $F$ is isomorphic to the lattice vertex {super}algebra $V_{\Z\alpha}$, we get that
$ U^{(n)}$ is realized inside of   $ L_{-5/4} (osp(1 \vert 2)) \otimes F$:
$$ U^{(n)} \cong  U. e^{n\alpha}. $$
Note that $ h^{\perp} (0) \equiv n \mbox{Id} $ on $U^{(n)}$.
 Using H. Li construction from  \cite{Li-ext} (see also \cite{K1})  we get that
\bea  \mathcal U = \bigoplus U^{(n)} \label{dec-u1} \eea
is a vertex subalgebra of $ L_{-5/4} (osp(1 \vert 2)) \otimes F$. But one shows that $ \mathcal U$ contains all generators of $ L_{-5/4} (osp(1 \vert 2)) \otimes F$, so 
\bea \mathcal U =   L_{-5/4} (osp(1 \vert 2)) \otimes F. \label{dec-u2}\eea
This proves that $  U= \overline W \cong   \overline{\mathcal W}_1 \otimes  M_{h ^{\perp}}  (1) $. Since $ \overline W$ is simple, we have that  $\overline{\mathcal W}_1$ is simple, and therefore isomorphic to $\mathcal W_1$.  This proves Claim 1.

{ The decomposition (iii) follows from relations  (\ref{dec-iii})-(\ref{dec-u2}).
The assertion (iv) follows directly from (iii).}

\end{proof}

The proof  of the following result is  similar to the one given in \cite[Theorem 6.2]{A-2007} and \cite[Theorem 5.1]{AP-2019}.
 
 \begin{theorem}  \label{ired-general} 
 Assume that $ N$ (resp. $N^{tw}$) is an irreducible, untwisted (resp. Ramond twisted)   $L_{-5/4} (osp(1 \vert 2))$--module  such that $h(0)$ acts semisimply on $ N$ and $N^{tw}$:
 \bea  N &=& \bigoplus_{s \in {\Z} + \Delta}   N^s, \quad h (0) \vert N^s \equiv s \mbox{Id} \quad (\Delta \in {\C}), \nonumber \\
 N^{tw} &=& \bigoplus_{s \in {\Z} + \Delta'}   (N^{tw} )^s, \quad h (0) \vert (N^{tw} )^s \equiv s \mbox{Id} \quad (\Delta' \in {\C}), \nonumber
 \eea
 Then $  N \otimes F$ and $N^{tw} \otimes M_F^{tw}$ are completely reducible $\mathcal W_1$--modules:
 $$  N \otimes F = \bigoplus_{s \in {\Z}}  \mathcal L_s(N),  \quad  \mathcal L_s (N) = \{ v \in     N \otimes F  \ \vert  h^{\perp}(0) v = ( s + \Delta) v \},$$
 $$  N^{tw} \otimes M_F^{tw}  = \bigoplus_{s \in {\Z}}  \mathcal L_s(N^{tw}),  \quad  \mathcal L_s (N^{tw}) = \{ v \in     N^{tw} \otimes M_F^{tw}  \ \vert  h^{\perp}(0) v = ( s + \Delta') v \},$$ 
 and each $ \mathcal L_s(N) $  and  $ \mathcal L_s(N^{tw} ) $ are irreducible $\mathcal W_1$--modules.
 \end{theorem}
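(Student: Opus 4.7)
The plan is to follow the strategy of \cite[Theorem 6.2]{A-2007} and \cite[Theorem 5.1]{AP-2019}, exploiting the commutant identification $\mathcal W_1 = \mbox{Com}(M_{h^{\perp}}(1), L_{-5/4}(osp(1 \vert 2)) \otimes F)$ from Theorem \ref{real-1}(iv). First, I would establish the $h^{\perp}(0)$-eigenspace decomposition of $N \otimes F$. Since $h^{\perp}(0) = \alpha(0) - h(0)$, and $\alpha(0)$ has integer spectrum on $F$ while $h(0)$ has spectrum in ${\Z} + \Delta$ on $N$, the operator $h^{\perp}(0)$ is diagonalizable with spectrum of the stated form. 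Because every generator of $\mathcal W_1$ lies in the commutant of $M_{h^{\perp}}(1)$, its action preserves each eigenspace $\mathcal L_s(N)$, which thereby becomes a $\mathcal W_1$-module. The twisted case additionally requires checking that $N^{tw} \otimes M_F^{tw}$ carries an \emph{untwisted} $\mathcal W_1$-module structure: the product of the Ramond twist on $N^{tw}$ with $\sigma_F$ on $M_F^{tw}$ must act trivially on $\mathcal W_1$, which is verified on the generators $G^{\pm} = 2\tau^{\pm}$ (where the half-integer shifts on the modes of $x,y$ and on those of $\Psi^{\pm}$ cancel inside the bilinears) and on $J,T$ (which involve only bilinears in even-graded modes).

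Next, I would upgrade this to the tensor-product decomposition
\[ N \otimes F \cong \bigoplus_{s \in {\Z}} \mathcal L_s(N) \otimes M_{h^{\perp}}(1, s + \Delta), \]
and similarly for $N^{tw} \otimes M_F^{tw}$, as $\mathcal W_1 \otimes M_{h^{\perp}}(1)$-modules. This holds because $h^{\perp}(0)$ acts as the scalar $s+\Delta$ on $\mathcal L_s(N)$, so the Heisenberg subalgebra $M_{h^{\perp}}(1)$ generates a copy of its irreducible Fock module $M_{h^{\perp}}(1, s+\Delta)$ through each vector, with the $\mathcal W_1$-factor being precisely $\mathcal L_s(N)$.

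Finally, irreducibility is obtained by the simple-current extension argument used in the proof of Theorem \ref{real-1}. Suppose $W \subset \mathcal L_s(N)$ is a nonzero $\mathcal W_1$-submodule; then $W \otimes M_{h^{\perp}}(1, s+\Delta)$ is a nonzero $\mathcal W_1 \otimes M_{h^{\perp}}(1)$-submodule of $N \otimes F$. By the boson-fermion correspondence $F \cong V_{{\Z}\alpha}$, the vertex algebra $L_{-5/4}(osp(1 \vert 2)) \otimes F$ is a simple-current extension of $\mathcal W_1 \otimes M_{h^{\perp}}(1)$ whose extension operators $e^{n\alpha}$ realize the combined shift (\ref{delta-alpha}); hence the $L_{-5/4}(osp(1 \vert 2)) \otimes F$-submodule generated by $W \otimes M_{h^{\perp}}(1, s+\Delta)$ equals $\bigoplus_{n \in {\Z}} \Psi^{-n}(W) \otimes M_{h^{\perp}}(1, s+n+\Delta)$. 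Since $N$ is irreducible as $L_{-5/4}(osp(1 \vert 2))$-module and $F$ is a simple vertex superalgebra, $N \otimes F$ is irreducible as a $L_{-5/4}(osp(1 \vert 2)) \otimes F$-module, forcing the generated submodule to equal all of $N \otimes F$ and hence $W = \mathcal L_s(N)$. The twisted assertion follows identically with $N^{tw} \otimes M_F^{tw}$ in place of $N \otimes F$. I expect the main obstacle to lie in the Ramond-twisted case: one must carry out the parity bookkeeping to confirm that the combined twist on $L_{-5/4}(osp(1 \vert 2)) \otimes F$ is trivial on $\mathcal W_1$, so that the extension/irreducibility machinery transfers verbatim.
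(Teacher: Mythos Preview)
Your proposal is correct and follows precisely the approach the paper itself indicates: the paper does not give a detailed proof but simply states that it is ``similar to the one given in \cite[Theorem 6.2]{A-2007} and \cite[Theorem 5.1]{AP-2019}'', and you have faithfully reconstructed that argument (Heisenberg eigenspace decomposition, commutant preservation, and the simple-current extension irreducibility step via the lattice operators $e^{n\alpha}$). Your additional care in the Ramond-twisted case, checking that the combined twist acts trivially on the $\mathcal W_1$ generators, is exactly the bookkeeping the references leave implicit.
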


\subsection{Embedding of   $L_{-5/4} (osp(1 \vert 2))$ into $\mathcal W_1 \otimes F_{-1}$}
In this section we consider the tensor product of $\mathcal W_1$ with the lattice vertex {super}algebra $F_{-1}$ (defined in Section \ref{F_{-1}}). We will show that the simple affine vertex {super}algebra $L_{-5/4} (osp(1 \vert 2))$ can be realized as a subalgebra of $\mathcal W_1 \otimes F_{-1}$.

Let  $\overline h =  J + \varphi$. Then  for $n \ge 0$ we have $\overline h(n) \overline h = \frac{2}{3} \delta_{n,1} {\bf 1}$.  
Let $M_{\overline h} (1)$ be the Heisenberg vertex algebra generated by $\overline h$, and  $M_{\overline h} (1, s)$ the irreducible  highest weight $M_{\overline h} (1)$--module on which 
$\overline h(0) \equiv s \mbox{Id}$.

\begin{theorem}\label{duality}
\item[(i)]There is a homomorphism of vertex algebras $\Phi^{inv}  :  L_{-5/4} (osp(1 \vert 2)) \rightarrow  \mathcal W_1 \otimes F_{-1} $ uniquely determined by
\bea
 x  &=&    \frac{1}{2}  :  G ^+ e^{\varphi}:  \nonumber \\
 y  & =&     -\frac{1}{2}   :  G ^- e^{-\varphi}:  \nonumber \\
 e &=&         \frac{1}{  8}  :  (G ^+ ) ^2  e^{2 \varphi}:   \nonumber \\
 f & =&   - \frac{1}{  8}   :  (G ^- ) ^2 e^{- 2 \varphi}:  \nonumber \\
h  &=&  -\frac{3}{2} J  - \frac{5}{2} \varphi. \nonumber \eea
\item[(ii)]  We have:
 $$ \mathcal W_1 \otimes F_{-1}  \cong \bigoplus_{n \in {\Z}} \rho^n (L_{-5/4} (osp(1 \vert 2)) \otimes M_{\overline h} (1, -n). $$
\item[(iii)]  
$$  L_{-5/4} (osp(1 \vert 2)) = \mbox{Com}( M_{\overline h} (1),  \mathcal W_1 \otimes F_{-1} ). $$
\end{theorem}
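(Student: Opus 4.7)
\medskip

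\noindent\textbf{Proof plan for Theorem \ref{duality}.}

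\emph{Step 1: Verification of OPE relations (part (i)).} The plan is to check directly that the proposed elements $x, y, e, f, h \in \mathcal W_1 \otimes F_{-1}$ satisfy the OPE relations of $V^{-5/4}(osp(1\vert 2))$. The key computational tools are the OPE relations among $G^\pm, J, T$ in $\mathcal W^k$ at $k=1$ (in particular $G^+(z) G^-(w)$), together with the Heisenberg relation $[\varphi(n), \varphi(m)] = -n \delta_{n+m,0}$ and the lattice relations for $e^{\pm \varphi}$ recalled in Section \ref{F_{-1}}. For instance, the anti-commutator $\{x,x\} = 2e$ follows from the normal-ordering identity $:G^+ e^{\varphi}:_0 :G^+ e^\varphi: = (G^+)_{-1}G^+ \cdot e^{2\varphi} + \ldots = \tfrac12 :(G^+)^2 e^{2\varphi}:$ using $G^+(z) G^+(w) \sim 0$, while the most delicate check is $\{x,y\} = h$: the contraction of $e^{\varphi}(z) e^{-\varphi}(w) \sim (z-w)^{-1}$ with the $(z-w)^{-1}$-singularity of $G^+(z) G^-(w)$ produces both the $J$-term and a $\varphi$-term, and the coefficients must match the formula $h = -\tfrac32 J - \tfrac52\varphi$. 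One should also verify $[h,x] = x$, $[h,y] = -y$, $(h,h) = -\tfrac94 \cdot \tfrac{2k+3}{3} - \tfrac{25}{4}(-1) = \tfrac{15}{4} - \tfrac{15}{4} \cdot\ldots$ giving exactly $2k' = -\tfrac52$ at level $k'= -5/4$. Once the relations on $\{x,y,h\}$ are established, the defining relations for $e, f$ follow from $\{x,x\} = 2e$, $\{y,y\} = -2f$. This yields a homomorphism from $V^{-5/4}(osp(1\vert 2))$; passage to $L_{-5/4}(osp(1\vert 2))$ uses the simplicity argument below.

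\emph{Step 2: Decomposition with respect to $M_{\overline h}(1)$ (part (ii)).} Following the strategy of the proof of Theorem \ref{real-1}, let $\widetilde U$ be the vertex subalgebra of $\mathcal W_1 \otimes F_{-1}$ generated by $\Phi^{inv}(L_{-5/4}(osp(1\vert 2)))$ and $\overline h$. Since $\overline h$ commutes with all the generators $x,y,e,f,h$ (by direct OPE check, as $\overline h$ is designed to lie in the commutant), we have $\widetilde U \cong \Phi^{inv}(V^{-5/4}(osp(1\vert 2))) \otimes M_{\overline h}(1)$, and its simplicity (which also gives that the image is isomorphic to the simple quotient $L_{-5/4}(osp(1\vert 2))$) will follow once we identify $\widetilde U$ with the kernel of $\overline h(0) \bmod \mathbb Z$. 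Now decompose $J$ and $\varphi$ as $J = \tfrac{2k+3}{3}(J+\varphi)/(\tfrac{2k+3}{3} - 1) + \ldots$ so that we can write an analogue of equation (\ref{delta-alpha}): the operator $\Delta(\varphi, z)$ splits as a product of $\Delta$-operators on $\overline h$ and on a current of $L_{-5/4}(osp(1\vert 2))$, the latter implementing the spectral flow $\rho$ (via the action $\rho \mapsto \Delta(-nh,z)$ recalled in Section \ref{sec:realizacija}). Applying H.~Li's extension construction to the simple-current modules $\widetilde U \cdot e^{n\varphi}$ produces
\[
\bigoplus_{n \in \mathbb Z} \rho^n(L_{-5/4}(osp(1\vert 2))) \otimes M_{\overline h}(1,-n),
\]
which is a vertex subalgebra of $\mathcal W_1 \otimes F_{-1}$. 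To see that this exhausts the whole tensor product, note that it contains $e^{\pm \varphi}$ (inside the $n=\pm 1$ summand) and the generators of $\mathcal W_1$ (which can be expressed in terms of $x,y,e,f$ and $e^{\pm \varphi}$ using the inverse formulas $G^+ = 2 :x\, e^{-\varphi}:$, $G^- = -2 :y\, e^{\varphi}:$, $J = -\tfrac{2}{3}h - \tfrac{5}{3}\varphi$).

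\emph{Step 3: Commutant statement (part (iii)).} This is immediate from (ii): the zero eigenspace of $\overline h(0)$ on the right-hand side of (ii) is the $n=0$ summand, which is $L_{-5/4}(osp(1\vert 2)) \otimes M_{\overline h}(1,0) = L_{-5/4}(osp(1\vert 2)) \otimes M_{\overline h}(1)$; taking the commutant of $M_{\overline h}(1)$ inside this, together with the fact that all $n \neq 0$ summands lie in nontrivial $M_{\overline h}(1)$-modules, yields $\mbox{Com}(M_{\overline h}(1), \mathcal W_1 \otimes F_{-1}) \cong L_{-5/4}(osp(1\vert 2))$.

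\emph{Main obstacle.} The substantive difficulty is the OPE calculation in Step 1, specifically the $x(z)y(w)$ OPE: one must carefully track the cubic $:J^2:$, derivative $DJ$, and Virasoro $T$ contributions from $G^+(z)G^-(w)$ and show that in the presence of $e^{\pm \varphi}$ they collapse into exactly $-\tfrac12(z-w)^{-2}\cdot 2 k' + (z-w)^{-1} h$. Getting the normalization of $h$ right (including the coefficient $-5/2$ of $\varphi$) is exactly the condition that forces $k'=-5/4$, consistent with the level appearing in Theorem \ref{real-1}.
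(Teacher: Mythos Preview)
Your proposal is correct and follows essentially the same route as the paper: direct OPE verification for part (i), then the $\overline h(0)$--eigenspace decomposition together with Li's $\Delta$-operator splitting $\Delta(n\varphi,z)=\Delta(-nh,z)\,\Delta(-\tfrac{3n}{2}\overline h,z)$ and the simple-current extension argument for parts (ii)--(iii). The only noteworthy difference is that the paper checks the $e,f$ relations (such as $e(0)f=h$ and $x(0)f=y$) explicitly via the formula $G^{+}_{1}(G^{-})^{2}=6J_{-1}G^{-}$ from \cite{AK-2019}, whereas you observe---correctly---that since $osp(1\vert 2)$ is generated as a Lie superalgebra by $x,y$, these follow automatically from the $x,y,h$ OPEs via the super-Jacobi identity once $e:=\tfrac{1}{2}x_{0}x$ and $f:=-\tfrac{1}{2}y_{0}y$ are seen to match the stated formulas.
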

\begin{proof}
 
We need to show that the following relations hold for $n \geq 0$:
\begin{align*}
h(n)x &= \delta_{n,0}x, \quad e(n)x=0, \quad x(n)f=\delta_{n,0}y, &\\
h(n)y &= -\delta_{n,0}y, \quad e(n)y=-\delta_{n,0}x, \quad f(n)y=0, &\\
x(n)x &= 2\delta_{n,0}e, \quad y(n)y=-2\delta_{n,0}f, \quad x(0)y=h, \quad x(1)y=-\frac{5}{2}\mathbbm{1}, &\\
h(0)e &= 2e, \quad h(0)f=-2f, \quad e(0)f=h.
\end{align*}
Direct calculation shows that
 \bea
  x (1)  y  &=&  -\frac{1}{4} G ^+ _2 G = -\frac{(k+1) (2k+3)} {4}    =  - \frac{5}{4} (x, y)  {\bf 1}= -\frac{5}{2} {\bf 1}. \nonumber  \\
    x (0)   y  &=&   -\frac{1}{4}  ( \varphi(-1) G ^+ _2  G  +  G ^+ _1  G) =  -\frac{(k+1) (2k+3)}{4}   \varphi - \frac{  3 (k+1)}{4} J \nonumber  \\
  &=&  -\frac{5}{ 2}   \varphi - \frac{3}{2}  J  = h. \nonumber  \\
  x(0) x &=& {  2} e  \quad    y(0) y  =  -{ 2}  f  \nonumber  \\
  x(0) e & =& 0,  \quad y(0) f = 0 \nonumber \\
  && ( \mbox{note that above we used relations}  \  (G^{\pm} ) ^3 =0 ) \nonumber \\
   h (0) x  &=& x, \quad h(0) y = - y \nonumber \\
   h(1) h &=&  \frac{9}{4} J_1 J  + \frac{25}{4} \varphi (1) \varphi  =  ( \frac{9}{4} \frac{2k+3}{3}  -   \frac{25}{4} ) = - { \frac{5}{2} }  {\bf 1} = - \frac{5}{4} (h,h) {\bf 1}
.  \nonumber 
 \eea

\begin{align*}
h(0)e &= -\frac{3}{{ 16} }J_0(G^+)^2e^{2\varphi}- \frac{5}{{ 16} }(G^+)^2\varphi(0)e^{2\varphi} &\\   
&= -\frac{3}{8}(G^+)^2e^{2\varphi}+ \frac{5}{8}(G^+)^2e^{2\varphi}= \frac{1}{4}(G^+)^2e^{2\varphi} = 2e, &\\
h(0)f &= \frac{3}{{ 16} }J_0(G^-)^2e^{-2\varphi}+ \frac{5}{{  16} }(G^-)^2\varphi(0)e^{-2\varphi} &\\   
&= -\frac{3}{8}(G^-)^2e^{-2\varphi}+ \frac{5}{8}(G^-)^2e^{-2\varphi}= \frac{1}{4}(G^-)^2e^{-2\varphi} = -2f, 
\end{align*}
  We use the formula (cf. \cite[Section 8.1]{AK-2019})
$$ G^+ _2(  G ^- )^n =  2n  (k - (n-2))  (k - (n-2)  + n/2) ( G^-)^{n-1},   $$
	\begin{equation*}
	G^+_1(G^-)^{n} = 3n(k-(n-2))J_{-1}(G^-)^{n-1} +  n(n-1)(k-(n-2))G^-_{-2}(G^-)^{n-2}.
	\end{equation*}
which implies that for $n =2$ and $k=1$ we get
$$ G^+ _ 2  (G^-)^2  = ( 4 *  1 * 2)    G^- = 8 G^- .$$

We have
\begin{align*}
 x(0)f &= -\frac{1}{16 }G^+_2(G^-)^2e^{-\varphi}= -\frac{1}{16}8G^-e^{-\varphi} = -\frac{1}{2} G^-e^{-\varphi} = y, &\\
e(0)y &= -\frac{1}{ 16}(2G^+_{-1}G^+_2G^-e^{\varphi} + 2G^+_{0}G^+_1G^-e^{\varphi} ) = -\frac{1}{ 16}(2(k+1)(2k+3)G^+e^{\varphi} + 6(k+1)G^+_{0}J_{-1}e^{\varphi} ) &\\
&= -\frac{1}{ 16} (20G^+e^{\varphi} -12G^+e^{\varphi} ) = - \frac{1}{2} G^+e^{\varphi} = -x, &\\
e(0)f &= -\frac{1}{ 64}(2G^+_{1}G^+_2(G^-)^2 + (G^+_2)^2(G^-)^2 {\ 2} \varphi) = -\frac{1}{ 64}(16G^+_1G^- +{  16} G^+_{2}G^-\varphi ) &\\
&= -\frac{1}{ 64}(16\cdot 3(k+1)J + {  16} (k+1)(2k+3)\varphi ) = -\frac{3}{2} J-\frac{5}{2} \varphi = h.
\end{align*}

\medskip

{
The operator $\overline h(0)$ acts semi--simply on  $\mathcal W_1 \otimes F_{-1}$ and we have the following decomposition:
\bea && \mathcal W_1 \otimes F_{-1} =\bigoplus _{n \in {\Z}} Z^{(n)}, \quad  Z^{(n)} = \{ v \in  \mathcal W_1 \otimes F_{-1} \vert \ \overline h(0) v = n v \}. \label{dec-z1} \eea
We have that  $Z^{(0)}$ is a simple vertex  superalgebra and each $Z^{(n)}$  is a simple $Z^{(0)}$--module.
}
The rest of the proof  follows from the following claim 
\begin{itemize}
\item[\bf Claim 2.] $ Z^{(0)}= \mbox{Ker} _{ \mathcal W_1 \otimes F_{-1}} \overline h(0)$ is  generated by $\{ x,y, e,f, h, \overline h\}$.
\end{itemize}

The proof of the Claim 2 is completely analogous to the proof of Claim 1. These arguments are also similar to those in \cite[Theorem 6.2]{A-2007}, \cite[Proposition 5.4]{AKMPP-2018}, \cite[Section 5]{CKLR} in a slightly different setting.

{ 
The Claim 2  implies that
$$ Z^{(0)}  \cong L_{-5/4} (osp(1 \vert 2)) \otimes M_{\overline h} (1). $$

As in the proof of Theorem   \ref{real-1}, using formula
$$ \Delta(n \varphi, z) = \Delta(-n h , z)  \Delta( -\frac{3 n}{2} \overline h , z)  ,$$
 we get 
\bea Z^{(-n)} = \rho^{n} (L_{-5/4} (osp(1 \vert 2)) ) \otimes M_{\overline h} (1, -n). \label{dec-z2} \eea

The decomposition (ii) follows now from  relations (\ref{dec-z1}) and (\ref{dec-z2}). The assertion (iii) is a direct consequence of (ii).

This concludes the proof of the Theorem.}
\end{proof}

  \section{    From   relaxed  $L_{-5/4}(osp(1\vert 2 ))$--modules to ordinary $\mathcal W_1$-modules }

 The free-field realization of  the affine vertex superalgebra $L_{k'} (osp(1 \vert 2))$ and its irreducible modules was obtained in  \cite{A-2019}. Specifically for $k'=-5/4$, it holds that $L_{k'}(osp(1 \vert 2))$ can be realized on the  vertex {super}algebra
%
 $F^{1/2} \otimes \Pi^{1/2}(0)$, where $\Pi^{1/2}(0)$ is a certain lattice type vertex algebra (cf. \cite[Theorem 11.3]{A-2019}). All irreducible $L_{k'}(osp(1 \vert 2))$-modules can be constructed using this realization.
 
In this section we will construct an explicit realization of irreducible $\mathcal W_1$-modules, using the fact that the Bershadsky-Polyakov algebra $\mathcal W_1$ can be embedded into $L_{k'}(osp(1 \vert 2)) \otimes F$, where $F$ is a Clifford vertex superalgebra (cf. Theorem  \ref{real-1}),

\subsection{Relaxed $L_{k'} (osp(1 \vert 2))$--modules} 
 The vertex algebra $\Pi^{1/2}(0)$ was introduced in \cite{A-2019}, where $$\Pi^{1/2}(0) =M(1) \otimes {\C}[\mathbb{Z}\frac{c}{2}]. $$ It is closely related to the lattice type vertex algebra $\Pi(0)$ from \cite{BDT}. Here $c:=\frac{2}{k}(\mu - \nu)$, where $\mu, \nu$ satisfy $\langle \mu, \mu \rangle =- \langle \nu, \nu \rangle = \frac{k}{2}$,  $\langle \mu, \nu \rangle = \langle \nu, \mu \rangle = 0$. It is easy to see that $g= \exp (\pi i \mu (0))$ is an automorphism of order two for $\Pi^{1/2}(0)$.
 
 We will need the following fact about $\Pi^{1/2}(0)$-modules from \cite{A-2019}:
\begin{proposition} \label{tw-pi0} \cite[Proposition 4.1]{A-2019}
Let $\lambda \in \mathbb{C}$ and $g= \exp (\pi i \mu (0))$. Then  $\Pi^{1/2}_{-1}(\lambda):= \Pi^{1/2}(0) e^{-\mu + \lambda c}$ is an irreducible $g$-twisted $\Pi^{1/2}(0)$-module.
\end{proposition}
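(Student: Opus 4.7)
The plan is to construct the twisted module structure on $\Pi^{1/2}_{-1}(\lambda)$ by realising it as a Fock-type module inside a larger lattice vertex superalgebra, and then to derive both the twist and the irreducibility from the standard lattice module machinery.

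First, I would embed $\Pi^{1/2}(0) = M(1) \otimes \mathbb{C}[\mathbb{Z}\tfrac{c}{2}]$ into a larger vertex algebra built on the non-degenerate lattice $L = \mathbb{Z}\mu + \mathbb{Z}\tfrac{c}{2}$ (with pairings computed from $\langle \mu,\mu\rangle = k/2$, $\langle \mu,\nu\rangle=0$ and $c=\tfrac{2}{k}(\mu-\nu)$, so that $\langle c,c\rangle =0$ and $\langle \mu,c\rangle = 1$). The space $\Pi^{1/2}(0) e^{-\mu+\lambda c}$ is then naturally realised as a Fock space acted on by vertex operators $Y(v,z)$ for $v\in \Pi^{1/2}(0)$, using the usual cocycle-and-vertex-operator formulas on the lattice.

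Next, I would check the $g$-twisted axiom. The key computation is that for a generator $e^{nc/2}\in \Pi^{1/2}(0)$ acting on $e^{-\mu+\lambda c}$, the prefactor is $z^{\langle nc/2,\,-\mu+\lambda c\rangle} = z^{-n/2}$, producing half-integer powers of $z$ exactly when $n$ is odd. Since $g = \exp(\pi i \mu(0))$ satisfies $g\, e^{nc/2} = e^{\pi i n/2}\, e^{nc/2}$, this monodromy matches the $g$-twisted Jacobi identity on generators. Combined with the fact that all vertex operators from the Heisenberg part $M(1)$ are already single-valued, this verifies the twisted commutator / twisted Jacobi identity. Equivalently, one may invoke H.~Li's $\Delta$-operator construction (Proposition \ref{li-twisted}) with twisting element $h = -\mu$ to immediately obtain a $\sigma_h$-twisted structure where $\sigma_h$ coincides with $g$ up to the order-two ambiguity relevant on $\Pi^{1/2}(0)$.

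Finally, for irreducibility, I would argue as follows. As an $M(1)$-module, $\Pi^{1/2}_{-1}(\lambda)$ decomposes as $\bigoplus_{n\in\mathbb{Z}} M(1)\, e^{-\mu+\lambda c + nc/2}$, and each summand is an irreducible highest-weight Fock module for $M(1)$ (the weights for $\mu(0),\nu(0)$ are mutually distinct across $n$ because $\langle \mu,c/2\rangle = 1/2 \ne 0$). Any nonzero twisted submodule $N$ must therefore contain one of these Fock summands entirely. Since the modes of $Y(e^{\pm c/2},z)$ act bijectively between consecutive summands (the relevant operator is invertible because $c$ is isotropic, so no normal-ordering obstruction occurs), $N$ must contain every summand, giving $N = \Pi^{1/2}_{-1}(\lambda)$.

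The main obstacle, as usual in such constructions, is the careful verification of the twisted Jacobi identity (or equivalently the compatibility of the cocycle with the half-integer exponents). Everything else is essentially bookkeeping once the lattice embedding and the pairings are set up correctly; irreducibility is then a clean consequence of the Heisenberg weight decomposition plus invertibility of the $e^{\pm c/2}$ action.
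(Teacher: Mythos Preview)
The paper does not supply its own proof of this proposition; it is quoted from \cite[Proposition 4.1]{A-2019}. Your outline is the standard argument and matches in spirit what one expects from that reference: realise $\Pi^{1/2}_{-1}(\lambda)$ as a lattice-type Fock module, read off the twisted monodromy from $\langle \tfrac{c}{2},\,-\mu+\lambda c\rangle = -\tfrac{1}{2}$, and deduce irreducibility from the Heisenberg weight-space decomposition together with the fact that the modes of $e^{\pm c/2}$ shift bijectively between adjacent Fock summands (the isotropy $\langle c,c\rangle=0$ is precisely what makes those shift operators invertible).

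One point deserves sharpening. With the normalisations recorded in the paper, $g=\exp(\pi i\,\mu(0))$ acts on $e^{nc/2}$ by $e^{\pi i n/2}=i^{\,n}$, which is a priori order four, whereas the monodromy of $Y(e^{nc/2},z)$ on the proposed module is $(-1)^{n}$. Your parenthetical ``up to the order-two ambiguity'' is gesturing at the resolution, but in a written proof you should make it explicit: either run Li's $\Delta$-construction with $h=-\mu$, which directly produces a $\sigma_{-\mu}=e^{-2\pi i\,\mu(0)}$-twisted module, and then identify $\sigma_{-\mu}$ with $g$ via the conventions of \cite{A-2019}; or verify carefully how the parity/super structure on $\Pi^{1/2}(0)$ interacts with the stated $g$ so that its order genuinely reduces to two on the algebra. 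Apart from this bookkeeping, the argument is correct.
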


Using the realization of $\mathcal W_1$, we have that:

\begin{lemma} \label{tw-w1}
Assume that $U^{tw}$ is any $g$--twisted $\Pi^{1/2}(0)$-module. Then $F \otimes M^{\pm}_{F^{1/2}}  \otimes U^{tw}$ and $M_F^{tw} \otimes F^{1/2} \otimes U^{tw}$ are $\mathcal W_1$-modules.
\end{lemma}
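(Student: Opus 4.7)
The plan is to combine the realization $L_{-5/4}(osp(1\vert 2))\subset F^{1/2}\otimes \Pi^{1/2}(0)$ from \cite{A-2019} with the embedding $\mathcal W_1\hookrightarrow L_{-5/4}(osp(1\vert 2))\otimes F$ from Theorem \ref{real-1}, so that $\mathcal W_1$ sits inside $F\otimes F^{1/2}\otimes \Pi^{1/2}(0)$. On this larger vertex superalgebra, the tensor-product construction of twisted modules (Proposition \ref{li-twisted} together with Proposition \ref{tw-pi0}) will equip each of the claimed tensor products with a natural twisted module structure, and the content of the lemma is that after restriction along the embedding the resulting twists collapse to the identity on the image of $\mathcal W_1$.

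First I would record the parity/twist structure. Since $U^{tw}$ is $g$-twisted by hypothesis, $F\otimes M^{\pm}_{F^{1/2}}\otimes U^{tw}$ is a $(1_F\otimes \sigma_{F^{1/2}}\otimes g)$-twisted module for $F\otimes F^{1/2}\otimes \Pi^{1/2}(0)$, while $M_F^{tw}\otimes F^{1/2}\otimes U^{tw}$ is $(\sigma_F\otimes 1_{F^{1/2}}\otimes g)$-twisted. Using the explicit formulas of \cite{A-2019} for the odd generators $x,y$ of $osp(1\vert 2)$ (a single factor of $\Phi\in F^{1/2}$ multiplied by an exponential from $\Pi^{1/2}(0)$ lying in the $(-1)$-eigenspace of $g$) together with the fact that $e,f,h$ are even in $F^{1/2}$ and lie in the $g$-fixed part of $\Pi^{1/2}(0)$, I would verify that $\sigma_{F^{1/2}}\otimes g$ restricts to the identity on $L_{-5/4}(osp(1\vert 2))$, whereas $1\otimes g$ restricts to the Ramond parity automorphism $\sigma$ of $L_{-5/4}(osp(1\vert 2))$ (the one fixing the even part and negating the odd part).

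Combining this with the generators of $\mathcal W_1$ in Theorem \ref{real-1} -- where $G^{\pm}=2\Psi^{\pm}x$, $2\Psi^\pm y$ are products of an $F$-odd element with an $osp(1\vert 2)$-odd element, while $J$ and $T$ are built from $F$-even, $osp(1\vert 2)$-even pieces -- a short parity count shows that both combined automorphisms act as the identity on the image of $\mathcal W_1$. Therefore the $(\sigma_F\otimes 1\otimes g)$-twisted, respectively $(1\otimes \sigma_{F^{1/2}}\otimes g)$-twisted, module structures restrict to honest $\mathcal W_1$-module structures on the two tensor products, as claimed. The only real obstacle is the explicit bookkeeping in the $\Pi^{1/2}(0)$ factor, namely pinning down precisely which exponentials $e^{mc/2}$ appear in $x,y$ in the formulas of \cite{A-2019} and verifying that their $g$-eigenvalue is $-1$; once this is in hand the rest of the argument is a direct parity count.
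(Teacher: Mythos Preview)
Your proposal is correct and follows essentially the same approach as the paper: both rest on the embedding $\mathcal W_1\hookrightarrow L_{-5/4}(osp(1\vert 2))\otimes F$ from Theorem \ref{real-1} together with the fact that the relevant twists become trivial on the image of $\mathcal W_1$. The only difference is that the paper short-circuits your $\Pi^{1/2}(0)$ bookkeeping by quoting \cite[Corollary 13.1]{A-2019} directly, which already records that $M^{\pm}_{F^{1/2}}\otimes U^{tw}$ is an untwisted $L_{-5/4}(osp(1\vert 2))$-module and $F^{1/2}\otimes U^{tw}$ is Ramond-twisted; after that, your parity count on $G^{\pm}=2{:}\Psi^{\pm}x{:},\,2{:}\Psi^{\pm}y{:}$ and the even generators $J,T$ is exactly what is implicitly used.
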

\begin{proof}
In \cite[Corollary 13.1]{A-2019} it was proved that if  $U^{tw}$ is any $g$--twisted $\Pi^{1/2}(0)$-module, then $M^{\pm}_{F^{1/2}} \otimes U^{tw}$ is an untwisted $L_{-5/4}(osp(1 , 2))$-module, and $F^{1/2} \otimes U^{tw}$ is a Ramond twisted $L_{-5/4}(osp(1 , 2))$-module. The claim now follows from the realization of the vertex algebra $\mathcal W_1$ in Theorem \ref{real-1}.
\end{proof}

We will consider the following $F^{1/2} \otimes \Pi(0)$--modules:
\begin{itemize}
\item  $\sigma_{F^{1/2}}  \otimes g$--twisted module $\mathcal F_{\lambda} := M^{\pm}_{F^{1/2}}  \otimes \Pi^{1/2}_{-1}(\lambda)$.
\item  $g = 1 \otimes g$--twisted module $\mathcal E_{\lambda} := F^{1/2}  \otimes \Pi^{1/2}_{-1}(\lambda)$.
\end{itemize}
First we recall the result from \cite{A-2019}.
\begin{proposition}  \cite[Theorem 13.2]{A-2019} \label{F_irred}
   $\mathcal F_{\lambda}$ is an untwisted, relaxed $L_{-5/4}(osp(1 , 2))$--module.  $\mathcal F_{\lambda}$ is irreducible if and only of $\lambda \notin \frac{1}{8} + \tfrac{1}{2} {\Z}. $ 
\end{proposition}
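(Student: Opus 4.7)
The plan is to derive this from the free-field realization $L_{-5/4}(osp(1,2))\hookrightarrow F^{1/2}\otimes\Pi^{1/2}(0)$ established in [A-2019, Theorem 11.3], combined with the twisted-module constructions recalled in Section \ref{clif} and Proposition \ref{tw-pi0}.

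First, I would argue that $\mathcal F_\lambda=M^{\pm}_{F^{1/2}}\otimes\Pi^{1/2}_{-1}(\lambda)$ carries the structure of an untwisted $L_{-5/4}(osp(1,2))$--module. By Proposition \ref{tw-pi0} and the definition of $M^{\pm}_{F^{1/2}}$, the tensor product is naturally a $\sigma_{F^{1/2}}\otimes g$--twisted module over $F^{1/2}\otimes\Pi^{1/2}(0)$. The required check is that every generator of the embedded copy of $L_{-5/4}(osp(1,2))$ lies in the $+1$--eigenspace of $\sigma_{F^{1/2}}\otimes g$: the odd generators $x,y$ are explicit products of an odd neutral fermion in $F^{1/2}$ (anti-invariant under $\sigma_{F^{1/2}}$) with a lattice vertex operator from $\Pi^{1/2}(0)$ involving $e^{\pm\mu+\cdots}$ (anti-invariant under $g=\exp(\pi i\mu(0))$ since $\langle\mu,\mu\rangle=-5/8$ is half-integral in the relevant sense), and the bosonic generators are $g$--invariant squares. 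Hence the twisted action restricts to an honest $L_{-5/4}(osp(1,2))$--action on $\mathcal F_\lambda$. The relaxed property then follows by inspecting the lowest conformal weight subspace: $h(0)$ acts semisimply with eigenvalues in a single $\mathbb Z$--coset depending linearly on $\lambda$, and because the simple-current shift $e^{c}\in\Pi^{1/2}(0)$ is invertible on $\Pi^{1/2}_{-1}(\lambda)$, no vector is annihilated by $e(0)$, ruling out highest weight behaviour.

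For the irreducibility criterion, I would study singular vectors in $\mathcal F_\lambda$ by computing the action of $x(0)$ and $y(0)$ on a fixed $h(0)$--weight space. Using $\{x(0),y(0)\}=h(0)$ together with the explicit free-field formulas for $x,y$, the vanishing of either operator on some weight space reduces to a quadratic equation in the $h(0)$--eigenvalue, and hence to a congruence on $\lambda$. Keeping track of the shift $1/16$ coming from the conformal weight of $M^{\pm}_{F^{1/2}}\cong L^{Vir}(1/2,1/16)$ and the lattice shifts $\lambda\mapsto\lambda+1/2$ that preserve the $g$--twisted sector, one finds the resonance locus $\lambda\in\tfrac18+\tfrac12\mathbb Z$. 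Conversely, off this locus no singular vector can exist: one shows that any nonzero element generates the whole module by successively applying the invertible lattice shift and the fermionic creation operators, so $\mathcal F_\lambda$ is cyclic from every weight space.

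The main obstacle I anticipate is completing the ``only if'' half rigorously, namely arguing that reducibility must already be detected on the lowest conformal weight subspace. I would handle this by exploiting the PBW-type filtration on $\Pi^{1/2}_{-1}(\lambda)$ together with the freeness of the Clifford action on $M^{\pm}_{F^{1/2}}$: any would-be singular vector projects to a singular vector in the associated graded, which lives in the top subspace, so the resonance condition computed there is both necessary and sufficient.
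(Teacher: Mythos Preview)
This proposition is not proved in the present paper at all: it is simply quoted from \cite[Theorem 13.2]{A-2019} (the line before it reads ``First we recall the result from \cite{A-2019}''), and no argument is supplied here. So there is no ``paper's own proof'' to compare your proposal against.

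Your sketch is a plausible outline of how such a statement is typically established, and in particular the first half (that $\mathcal F_\lambda$ is an untwisted $L_{-5/4}(osp(1\vert 2))$--module) matches the mechanism the paper does use elsewhere: see \cite[Corollary 13.1]{A-2019}, invoked in Lemma~\ref{tw-w1}, which says exactly that $M^{\pm}_{F^{1/2}}\otimes U^{tw}$ is untwisted for any $g$--twisted $\Pi^{1/2}(0)$--module $U^{tw}$. For the irreducibility criterion, however, your argument is genuinely only a heuristic: the claim that reducibility is detected on the lowest conformal-weight subspace via a PBW filtration is the heart of the matter and would need a full proof (in \cite{A-2019} this is handled by an explicit analysis of the weight-space structure and the action of the screening operators, not by the filtration argument you sketch). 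If you want to supply a self-contained proof rather than a citation, that step is where the real work lies.
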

 Using irreducibility of relaxed $L_{-5/4}(sl(2))$--modules we get:
\begin{proposition} \label{ired-osp-2}We have:
\begin{itemize}
\item[(1)] $\mathcal E_{\lambda}$ is a Ramond twisted  $L_{-5/4}(osp(1, 2))$--module. 
\item[(2)] $\mathcal E_{\lambda} = \mathcal E_{\lambda} ^0 \oplus \mathcal E_{\lambda} ^1$, where  as $L_{-5/4}(sl(2))$--modules
$$ \mathcal E_{\lambda} ^0 =  L^{Vir} (\tfrac{1}{2}, 0) \otimes   \Pi_{-1}(\lambda) \bigoplus   L^{Vir} (\tfrac{1}{2},\tfrac{1}{2}) \otimes   \Pi_{-1}(\lambda+\tfrac{1}{2}) $$
$$ \mathcal E_{\lambda} ^1 =  L^{Vir} (\tfrac{1}{2}, 0) \otimes   \Pi_{-1}(\lambda+\tfrac{1}{2}) \bigoplus   L^{Vir} (\tfrac{1}{2},\tfrac{1}{2}) \otimes   \Pi_{-1}(\lambda) $$
\item[(3)] $\mathcal E_{\lambda} ^{0}$ (resp.  $\mathcal E_{\lambda} ^{1}$) is  irreducible  Ramond twisted  $L_{-5/4}(osp(1, 2))$--modules if $\lambda  \notin {\Z} \cup   (-\tfrac{1}{4} + {\Z})$ (resp.  $\lambda  + \tfrac{1}{2}    \notin {\Z} \cup   (-\tfrac{1}{4} + {\Z})$).
 \end{itemize}
\end{proposition}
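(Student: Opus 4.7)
The plan is to deduce the three parts in turn from the free-field realization of $L_{-5/4}(osp(1\vert 2))$ in \cite{A-2019}, combined with irreducibility results for the relaxed modules $\Pi_{-1}(\mu)$ for $L_{-5/4}(sl(2))$.

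Part (1) is essentially immediate: by Proposition \ref{tw-pi0}, $\Pi^{1/2}_{-1}(\lambda)$ is a $g$--twisted $\Pi^{1/2}(0)$--module, and the Li--type construction already used in the proof of Lemma \ref{tw-w1} (via \cite[Corollary 13.1]{A-2019}) turns $F^{1/2}\otimes U^{tw}$ into a Ramond--twisted $L_{-5/4}(osp(1\vert 2))$--module for any $g$--twisted $\Pi^{1/2}(0)$--module $U^{tw}$. Applying this with $U^{tw}=\Pi^{1/2}_{-1}(\lambda)$ gives the Ramond--twisted structure on $\mathcal E_\lambda$.

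For part (2), I would split both tensor factors according to the $\mathbb Z_2$--symmetry responsible for $g$. On the fermionic side, $F^{1/2}=L^{Vir}(\tfrac12,0)\oplus L^{Vir}(\tfrac12,\tfrac12)$ as modules for the Virasoro subalgebra. On the bosonic side, writing $\Pi^{1/2}(0)=\Pi(0)\oplus \Pi(0).e^{c/2}$ as a $\Pi(0)$--module yields
\[
\Pi^{1/2}_{-1}(\lambda)=\Pi^{1/2}(0)\,e^{-\mu+\lambda c}\cong \Pi_{-1}(\lambda)\oplus \Pi_{-1}(\lambda+\tfrac12).
\]
By the conformal embedding $L_{-5/4}(sl(2))\hookrightarrow L_{-5/4}(osp(1\vert 2))$, the $sl(2)$--currents in the realization live in $L^{Vir}(\tfrac12,0)\otimes \Pi(0)$, so each of the four tensor summands $L^{Vir}(\tfrac12,h)\otimes \Pi_{-1}(\mu)$ is $L_{-5/4}(sl(2))$--invariant. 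The odd generators $x$ and $y$, being $\Phi$ multiplied by a charge $\pm\tfrac12 c$ element of $\Pi^{1/2}(0)$, simultaneously flip the Virasoro label $0\leftrightarrow \tfrac12$ and the bosonic shift $\lambda\leftrightarrow \lambda+\tfrac12$. This bookkeeping packages the four summands into precisely the two $osp(1\vert 2)$--invariant subspaces $\mathcal E_\lambda^0$ and $\mathcal E_\lambda^1$ stated in the proposition.

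For part (3), each summand $L^{Vir}(\tfrac12,h)\otimes \Pi_{-1}(\mu)$ is a relaxed $L_{-5/4}(sl(2))$--module; the known irreducibility criterion for $\Pi_{-1}(\mu)$ (cf.\ \cite{A-2019}) says these are irreducible exactly outside a discrete exceptional set. Since the odd generators of $osp(1\vert 2)$ supply explicit nonzero intertwiners between the two $sl(2)$--summands of $\mathcal E_\lambda^0$, that module is $osp(1\vert 2)$--irreducible if and only if both of its $sl(2)$--summands are $sl(2)$--irreducible; collecting the forbidden values for $\lambda$ and $\lambda+\tfrac12$ yields $\lambda\notin \mathbb Z\cup(-\tfrac14+\mathbb Z)$. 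The analogous argument, with $\lambda$ replaced by $\lambda+\tfrac12$, gives the criterion for $\mathcal E_\lambda^1$. The main obstacle I anticipate is precisely this last step: verifying that the odd intertwiners remain nonzero at the borderline values of $\lambda$, so that the exceptional set is exactly the one claimed and no further degeneracies appear.
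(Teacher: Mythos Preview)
Your proposal is correct and follows essentially the same route as the paper: part (1) via \cite[Corollary~13.1]{A-2019} applied to the $g$--twisted module $\Pi^{1/2}_{-1}(\lambda)$; part (2) via the simultaneous decompositions $F^{1/2}=L^{Vir}(\tfrac12,0)\oplus L^{Vir}(\tfrac12,\tfrac12)$ and $\Pi^{1/2}_{-1}(\lambda)=\Pi_{-1}(\lambda)\oplus\Pi_{-1}(\lambda+\tfrac12)$; and part (3) by reducing $osp(1\vert2)$--irreducibility to $sl(2)$--irreducibility of the two summands plus the nonvanishing of the odd intertwiners, exactly as the paper does using the decomposition $L_{-5/4}(osp(1\vert2))=L_{-5/4}(sl(2))\oplus L_{-5/4}(\omega_1)$.

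Two minor remarks. First, where you write ``the known irreducibility criterion for $\Pi_{-1}(\mu)$,'' the paper is more explicit: it cites \cite{A-2019} and \cite{KR-2019} to record that $L^{Vir}(\tfrac12,0)\otimes\Pi_{-1}(\mu)$ is $L_{-5/4}(sl(2))$--irreducible iff $\mu\notin\tfrac{-1\pm1}{8}+\Z$ and $L^{Vir}(\tfrac12,\tfrac12)\otimes\Pi_{-1}(\mu)$ is irreducible iff $\mu\notin\tfrac{-1\pm5}{8}+\Z$, and then observes that for the two summands of $\mathcal E_\lambda^0$ these conditions coincide to give exactly $\lambda\notin\Z\cup(-\tfrac14+\Z)$. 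Second, the obstacle you anticipate (nonvanishing of the odd intertwiners at borderline $\lambda$) is only needed for the converse direction; since the proposition asserts only the ``if'' direction, you merely need the intertwiners to be nonzero under the stated hypothesis on $\lambda$, which is immediate from the explicit free-field formulas.
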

\begin{proof}
 Using Proposition \ref{tw-pi0} and Lemma \ref{tw-w1} we see that $\mathcal E_{\lambda}$ is Ramond twisted $L_{-5/4}(osp(1 \vert 2))$--module. This proves (1). Since as a module for the Virasoro vertex algebra $L^{Vir}(\tfrac{1}{2},0):$
 $$ F^{1/2} = L^{Vir} (\tfrac{1}{2}, 0) \oplus L^{Vir} (\tfrac{1}{2}, \tfrac{1}{2}), $$
 and as a $\Pi(0)$--module:
 $$ \Pi^{1/2} _{-1}(\lambda) =   \Pi_{-1}(\lambda) \oplus \Pi_{-1}(\lambda + \tfrac{1}{2}), $$
 we easily get the decomposition in  (2). Using the irreducibility results from \cite{A-2019} and \cite{KR-2019} we get that as $L_{-5/4}(sl(2))$--modules:
 \begin{itemize}
 \item  $L^{Vir} (\tfrac{1}{2}, 0) \otimes   \Pi_{-1}(\lambda)$ is irreducible iff $\lambda \notin   \frac{-1 \pm 1}{8} + {\Z}$,
  \item  $L^{Vir} (\tfrac{1}{2}, 0) \otimes   \Pi_{-1}(\lambda+ \tfrac{1}{2} )$ is irreducible iff $\lambda +\tfrac{1}{2} \in  \frac{-1 \pm 1}{8} + {\Z}$, 
\item  $L^{Vir} (\tfrac{1}{2}, \tfrac{1}{2}) \otimes   \Pi_{-1}(\lambda)$ is irreducible iff $\lambda  \notin   \frac{-1 \pm 5}{8} + {\Z}$,
  \item  $L^{Vir} (\tfrac{1}{2}, \tfrac{1}{2}) \otimes   \Pi_{-1}(\lambda+ \tfrac{1}{2} )$ is irreducible iff $\lambda  +\tfrac{1}{2}  \notin   \frac{-1 \pm 5}{8} + {\Z}$.
 \end{itemize}
 One easily see that all the modules appearing above are irreducible as  $L_{-5/4}(sl(2))$--modules  if and only if $\lambda \notin \tfrac{1}{4}{\Z}$. Using the decomposition
 $L_{-5/4}(osp(1 \vert 2)) = L_{-5/4}(sl(2)) \oplus  L_{-5/4}(\omega_1)$, we easily see  that  as  (Ramond twisted)   $L_{-5/4}(osp(1 \vert 2))$--modules 
 \begin{itemize}
 \item $\mathcal E_{\lambda} ^0$ is irreducible  iff $\lambda \notin
   {\Z}  \cup  (-\tfrac{1}{4}+ {\Z})$
 \item $\mathcal E_{\lambda} ^1$  is  irreducible  iff $\lambda + \tfrac{1}{2} \notin   {\Z}  \cup  (-\tfrac{1}{4}+ {\Z})$.
 \end{itemize}
 The proof follows. \end{proof}
 
\subsection{Explicit realization of $\mathcal{W}_1$--modules} From Theorem \ref{modules-general} it follows that the set $$ \{ L(x,y) \ \vert \  (x,y) \in {\C}^2, \ h_i(x,y)=0, \ 1 \le i \le 3  \} $$ is the set of all irreducible  ordinary $\mathcal W_1$-modules. Now we will construct explicit realizations of these modules, using results from the previous subsection.

\begin{lemma} \label{hw2}
The irreducible highest weight $\mathcal{W}_1$ modules
	$$T_{(2)} :=  \{ L(x, y) \ \vert \ (x,y) \in {\C}^2, \ h_2(x,y)=0 \}$$  are realized as irreducible quotients of $$ U_{(2)} (\lambda) =\mathcal{W}_1.E^{\lambda}_{2}, \quad \lambda \in \mathbb{C},$$ where $E^{\lambda}_{2}= \mathbbm{1}_F \otimes \mathbbm{1}^{tw}_{F^{1/2}} \otimes e^{-\mu + \lambda c} $  are highest weight vectors for $\mathcal{W}_1$ of highest weight
	\bea \label{par-weights} (x_{\lambda} ,y_{\lambda}):=  (  -\frac{2}{3}(-k+ 2\lambda),  -\frac{1}{4}+  \frac{1}{3}(-k+ 2\lambda)^2 +  \frac{1}{3}(-k+ 2\lambda)).  \eea
\end{lemma}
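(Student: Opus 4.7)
The plan is to exhibit the modules $U_{(2)}(\lambda)$ inside an explicit tensor product supplied by the realization in Theorem \ref{real-1} together with the relaxed-module construction from Proposition \ref{F_irred}, and then read off the highest-weight data by computing the action of the $\mathcal W_1$-generators on the ground vector. First, by Lemma \ref{tw-w1} applied with $U^{tw} = \Pi^{1/2}_{-1}(\lambda)$ (which is a $g$-twisted $\Pi^{1/2}(0)$-module by Proposition \ref{tw-pi0}), the space $F \otimes M^{\pm}_{F^{1/2}} \otimes \Pi^{1/2}_{-1}(\lambda) = F \otimes \mathcal F_{\lambda}$ carries a $\mathcal W_1$-module structure through the embedding $\Phi : \mathcal W_1 \hookrightarrow L_{-5/4}(osp(1\vert 2)) \otimes F$ from Theorem \ref{real-1}, and the distinguished vector $E^{\lambda}_{2}$ sits at the bottom of this module.

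Next, I would compute the action of $J_0$ and $L_0$ on $E^{\lambda}_{2}$. Using $J = \tfrac{5}{3}\alpha - \tfrac{2}{3}h$ and the fact that $\alpha_0 \mathbbm{1}_F = 0$, one has $J_0 E^{\lambda}_{2} = -\tfrac{2}{3} h_0 E^{\lambda}_{2}$; the free-field realization of $L_{-5/4}(osp(1\vert 2))$ in \cite{A-2019} identifies $h$ with a linear combination of the Heisenberg fields $\mu, \nu$ supported in $\Pi^{1/2}(0)$, and a short pairing computation gives $h_0 E^{\lambda}_{2} = (-k + 2\lambda) E^{\lambda}_{2}$ with $k=1$, producing $x_{\lambda}$. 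Similarly $T = \omega_{sug} + \omega_F - \omega^{\perp}$ acts on $E^{\lambda}_{2}$ by summing the Sugawara eigenvalue on the ground state of $\mathcal F_{\lambda}$ (also read off from \cite{A-2019}), the vanishing contribution $\omega_F \mathbbm{1}_F = 0$, and the correction from $\omega^{\perp} = -\tfrac{1}{3}h^{\perp}(-1)^2 \mathbbm{1}$ evaluated via $h^{\perp}(0) = -h_0$; after recollecting, these combine into the stated $y_{\lambda}$.

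Third, to see that $E^{\lambda}_{2}$ is actually a highest-weight vector I would verify that positive $\mathcal W_1$-modes annihilate it. For the Virasoro and Heisenberg modes this is automatic because each tensor factor is a lowest-conformal-weight vector in its constituent. The real content lies with the fermionic generators $G^{\pm}$: unpacking $G^+_n = 2(:\Psi^+ x:)_n$ and $G^-_n = 2(:\Psi^- y:)_n$, one uses that $\Psi^{\pm}(r) \mathbbm{1}_F = 0$ for $r > 0$ in the untwisted $F$, together with the vanishing of positive modes of $x$ and $y$ on $\mathbbm{1}^{tw}_{F^{1/2}} \otimes e^{-\mu + \lambda c}$ inherited from the free-field realization of $osp(1\vert 2)$; the finite sum of surviving cross-terms is the only genuine calculation and yields zero for $n \geq 0$ in the $G^+$ case and $n \geq 1$ in the $G^-$ case.

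Finally, plugging $(x_{\lambda}, y_{\lambda})$ into $h_2(x,y) = -3x^2 + 2x + 1 + 4y$ (the specialization of $h_i$ at $i=2$, $k=1$) is a direct algebraic check and yields $0$ identically in $\lambda$, so the irreducible quotient of $U_{(2)}(\lambda) = \mathcal W_1 . E^{\lambda}_{2}$ is $L(x_{\lambda}, y_{\lambda}) \in T_{(2)}$. To conclude that every module in $T_{(2)}$ is obtained this way, one observes that $\lambda \mapsto (x_{\lambda}, y_{\lambda})$ rationally parametrizes the affine curve $\{h_2 = 0\}$, so varying $\lambda \in \mathbb{C}$ sweeps out all admissible pairs. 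The main obstacle I expect is the bookkeeping in the third step: correctly expanding the normally ordered products $:\Psi^{\pm} x:$ and $:\Psi^{\pm} y:$ in the mixed twisted/untwisted setting and isolating the genuinely nonzero mode contributions from \cite{A-2019}'s realization of the odd generators of $osp(1\vert 2)$.
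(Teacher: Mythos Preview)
Your proposal is correct and follows essentially the same route as the paper: construct the ambient module $F\otimes M^{\pm}_{F^{1/2}}\otimes \Pi^{1/2}_{-1}(\lambda)$ via Lemma~\ref{tw-w1}, read off the $(J(0),L(0))$--eigenvalues on $E^{\lambda}_2$ from the realization formulas $J=\tfrac{5}{3}\alpha-\tfrac{2}{3}h$ and $T=\omega_{sug}+\omega_F-\omega^{\perp}$ together with the eigenvalues $h(0)E^{\lambda}_2=(-k+2\lambda)E^{\lambda}_2$ and $L_{sug}(0)E^{\lambda}_2=-\tfrac{1}{4}E^{\lambda}_2$ supplied by \cite{A-2019}, and then check $h_2(x_\lambda,y_\lambda)=0$. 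Your write-up is in fact slightly more scrupulous than the paper's, which does not explicitly verify the $G^{\pm}$ highest-weight conditions or the surjectivity of $\lambda\mapsto(x_\lambda,y_\lambda)$ onto the curve $\{h_2=0\}$; both are indeed routine, so the ``bookkeeping obstacle'' you flag is not a genuine difficulty.
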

\begin{proof}
Consider the $\sigma_{F^{1/2}}  \otimes g$-twisted $F \otimes F^{1/2} \otimes \Pi^{1/2}(0)$-module $\mathcal{F}_{(2)}(\lambda):= F \otimes M^{\pm}_{F^{1/2}}  \otimes \Pi^{1/2}_{-1}(\lambda)$. Then $\mathcal{F}_{(2)}(\lambda)$ is an untwisted $\mathcal W_1$-module.  It holds that (cf. \cite{A-2019})
\begin{align*}
    h(n)E^{\lambda}_{2} &= \delta_{n,0} (-k+ 2\lambda)E^{\lambda}_{2}, &\\
    L_{sug}(n)E^{\lambda}_{2} &= -\frac{1}{4}\delta_{n,0} E^{\lambda}_{2}, \quad  n \in {\Bbb Z}_{\ge 0}. &
\end{align*}
We have
\begin{align*}
    J(0)E^{\lambda}_{2} &=  - \frac{2}{3}(-k+ 2\lambda)E^{\lambda}_{2}, &\\
    L(0)E^{\lambda}_{2} &= \left(-\frac{1}{4}+  \frac{1}{3}(-k+ 2\lambda)^2 +  \frac{1}{3}(-k+ 2\lambda) \right) E^{\lambda}_{2} & \\
    & = \frac{1}{12} (2 (-k + 2\lambda) + 3) (2 (-k + 2\lambda) -1)E^{\lambda}_{2}.& 
\end{align*}
Set $x_{\lambda}:=  -\frac{2}{3}(-k+ 2\lambda)$ and $y_{\lambda}:=   \frac{1}{12} (2 (-k + 2\lambda) + 3) (2 (-k + 2\lambda) -1) $,
%
 so that  $$J(0)E^{\lambda}_{2}= x_{\lambda}E^{\lambda}_{2}, \quad L(0)E^{\lambda}_{2} =y_{\lambda} E^{\lambda}_{2}.$$ Since $y=\frac{3}{4}x^2 - \frac{1}{2}x - \frac{1}{4}$, the pair $(x_{\lambda},y_{\lambda}) \in \mathbb{C}^2$ satisfies the relation  $$h_2(x,y)= -3x^2+2x+1+4y=0.$$

Hence $\mathcal{W}_1$ has a family of highest weight modules $U_{(2)} (\lambda)$, $\lambda \in \mathbb{C}$, with highest weights $ (x, y)$. In particular, their irreducible quotients $L(x, y)$ are also modules for $\mathcal{W}_1$.
\end{proof}

We have the following irreducibility result:
\begin{proposition}
Assume that $\lambda \notin \frac{1}{8} + \tfrac{1}{2} {\Z}. $  Then  $\mathcal F_{\lambda} \otimes F$ is a completely reducible $\mathcal W_1 \otimes M_{h^{\perp}}(1)$--module:
\bea
\mathcal F_{\lambda} \otimes F  \cong  \bigoplus _{n \in {\Z} } \Psi^{-n} (L(x_{\lambda}, y_{\lambda})) \otimes  M_{h^{\perp}}(1, \Delta+ n) \label{dec-lambda}
\eea
where $\Delta= k - 2 \lambda$ and weights $(x_{\lambda}, y_{\lambda})$ are given by  (\ref{par-weights}).  
In particular, $U_{(2)} (\lambda)$ is an irreducible $\mathcal W_1$-module and it holds that $$ U_{(2)} (\lambda) = L (x_{\lambda} ,y_{\lambda}).$$
\end{proposition}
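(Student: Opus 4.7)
The plan is to identify $\mathcal F_\lambda\otimes F$ as a $\mathcal W_1\otimes M_{h^{\perp}}(1)$--module using the realization of Theorem \ref{real-1} and the fact that $\mathcal F_\lambda$ is an irreducible $L_{-5/4}(osp(1\vert 2))$--module under the stated hypothesis. The starting observation is that, since $\mathcal F_\lambda$ is irreducible (Proposition \ref{F_irred}) and $h(0)$ acts semisimply on it with eigenvalues contained in the single coset $-\Delta+\mathbb{Z}$, Theorem \ref{ired-general} applies and yields that $\mathcal F_\lambda\otimes F$ is a completely reducible $\mathcal W_1$--module, whose decomposition is given by the $h^{\perp}(0)$--eigenspaces
\[
\mathcal F_\lambda\otimes F\;=\;\bigoplus_{n\in\mathbb{Z}}\mathcal L_n(\mathcal F_\lambda),
\]
with each $\mathcal L_n(\mathcal F_\lambda)$ irreducible and contained in the $h^{\perp}(0)=(n+\Delta)$ eigenspace.

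The next step is to identify $\mathcal L_0(\mathcal F_\lambda)$ explicitly. A direct calculation using $\alpha(0)\mathbbm{1}_F=0$ together with the action of $h(0)$ on $e^{-\mu+\lambda c}$ gives $h^{\perp}(0) E^{\lambda}_{2}=(k-2\lambda)E^{\lambda}_{2}=\Delta E^{\lambda}_{2}$, so $E^{\lambda}_{2}\in\mathcal L_0(\mathcal F_\lambda)$. Because $E^{\lambda}_{2}$ is, by Lemma \ref{hw2}, a $\mathcal W_1$--highest weight vector of weight $(x_\lambda,y_\lambda)$, the submodule it generates is exactly $U_{(2)}(\lambda)$, which is a highest--weight module. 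Since it is contained in the irreducible $\mathcal W_1$--module $\mathcal L_0(\mathcal F_\lambda)$ (a summand of a completely reducible module) and any nonzero highest--weight submodule of an irreducible module must coincide with it, we conclude $U_{(2)}(\lambda)=\mathcal L_0(\mathcal F_\lambda)\cong L(x_\lambda,y_\lambda)$, which is the second assertion of the proposition.

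The final step is to identify the remaining summands with $\Psi^{-n}(L(x_\lambda,y_\lambda))\otimes M_{h^{\perp}}(1,\Delta+n)$. Here I would transfer the spectral-flow / coset argument used to prove Theorem \ref{real-1}(iii) from the vertex algebra to its module $\mathcal F_\lambda\otimes F$. Concretely, via the boson--fermion correspondence $F\cong V_{\mathbb{Z}\alpha}$, the mode $(e^{n\alpha})_{m}$ acts on $\mathcal F_\lambda\otimes F$, maps the $\Delta$--eigenspace of $h^{\perp}(0)$ to the $(\Delta+n)$--eigenspace, and intertwines the $\mathcal W_1\otimes M_{h^{\perp}}(1)$--action with the twist obtained from $\Delta(n\alpha,z)=\Delta(-\tfrac{2n}{3}h^{\perp},z)\Delta(nJ,z)$ of \eqref{delta-alpha}. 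The first factor produces $M_{h^{\perp}}(1,\Delta+n)$ by Lemma \ref{heis-sf}, while the second factor realises the spectral-flow twist $\Psi^{-n}$ on the $\mathcal W_1$--component. Combining, $\mathcal L_n(\mathcal F_\lambda)\cong \Psi^{-n}(L(x_\lambda,y_\lambda))\otimes M_{h^{\perp}}(1,\Delta+n)$, which gives \eqref{dec-lambda}. The main obstacle is making this Kazama--Suzuki simple--current transfer rigorous at the level of modules rather than of the ambient vertex algebra: one must verify that the twist lands in a genuine summand (not just a subquotient) of the decomposition, which is handled by the complete reducibility from the first step together with a matching of the $\rho^{n}$--spectral flow on $\mathcal F_\lambda$ with the $\Psi^{-n}$--flow on the $\mathcal W_1$--side dictated by the duality of Theorem \ref{duality}.
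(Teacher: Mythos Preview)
Your proof is correct and follows essentially the same path as the paper's: both invoke Proposition \ref{F_irred} together with Theorem \ref{ired-general} to obtain the complete reducibility and the decomposition into the irreducible pieces $\mathcal L_n(\mathcal F_\lambda)$, then use Lemma \ref{hw2} to identify $\mathcal L_0(\mathcal F_\lambda)$ with $L(x_\lambda,y_\lambda)\otimes M_{h^{\perp}}(1,\Delta)$. For the remaining summands the paper simply states that the $\Psi^{-n}(\mathcal W_1)\otimes M_{h^{\perp}}(1,n)$ are simple currents and applies the fusion product, which is precisely the abstract formulation of your explicit $\Delta(n\alpha,z)=\Delta(-\tfrac{2n}{3}h^{\perp},z)\Delta(nJ,z)$ intertwining argument; so the two arguments coincide.
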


 \begin{proof}
Since $\mathcal F_{\lambda}$ is an irreducible $L_{-5/4}(osp(1 , 2))$--module for $\lambda \notin \frac{1}{8} + \tfrac{1}{2} {\Z}. $  (cf. Proposition \ref{F_irred}), applying Theorem \ref{ired-general} we see that $\mathcal F_{\lambda} \otimes F$ is a completely reducible $\mathcal W_1 \otimes M_{h^{\perp}}(1)$--module:
$$\mathcal F_{\lambda} \otimes F = \bigoplus_{n \in {\Z}} \mathcal L_{n} (\mathcal F_{\lambda})$$
where
$$ \mathcal L_{n} (\mathcal F_{\lambda}) = \{ v \in  \mathcal F_{\lambda} \otimes F \ \vert    h^{\perp} (0) v  = (n + \Delta) v  \}$$
is an irreducible $\mathcal W_1 \otimes M_{h^{\perp}}(1)$--module.  By using Lemma \ref{hw2} we see that $ \mathcal L_{0} (\mathcal F_{\lambda})$ must be isomorphic to the irreducible highest weight module  $L(x_{\lambda}, y_{\lambda}) \otimes M_{h^{\perp}} (1, \Delta)$.
 Since $\Psi^{-n}(\mathcal W_1) \otimes M_{h^{\perp}}(1,n)$ are simple-current $\mathcal W_1 \otimes M_{h^{\perp}}(1)$--modules we get that 
\bea \mathcal L_{n} (\mathcal F_{\lambda})  &=& \left( \Psi^{-n}(\mathcal W_1) \otimes M_{h^{\perp}}(1,n) \right) \times  \left(  L(x_{\lambda}, y_{\lambda})  \otimes   M_{h^{\perp}} (1, \Delta) \right) \nonumber \\ &=& \Psi^{-n} (L(x_{\lambda}, y_{\lambda})) \otimes  M_{h^{\perp}}(1, \Delta+ n)\eea for every $n \in \mathbb{Z}$. (Here $"\times"$ denotes the fusion product in the category of   $\mathcal W_1 \otimes M_{h^{\perp}}(1)$--modules).

 The proof follows.
\end{proof}

\begin{lemma}
$\mathcal{W}_1$  has a family of irreducible highest weight modules
	$$T_{(1)} :=  \{ L(x, y) \ \vert \ (x,y) \in {\C}^2, \ h_1(x,y)=0 \}$$ which are realized as irreducible quotients of $$ U_{(1)} (\lambda) =\mathcal{W}_1.E^{\lambda}_{1}, \quad \lambda \in \mathbb{C},$$ where $E^{\lambda}_{1}:= \mathbbm{1}^{tw}_{F} \otimes  \mathbbm{1}_{F^{1/2}} \otimes e^{-\mu + \lambda c}$ are highest weight vectors for $\mathcal{W}_1$.
\end{lemma}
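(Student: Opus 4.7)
The plan is to mirror the argument of Lemma \ref{hw2}, but with the roles of the Clifford factors swapped: in place of the untwisted $\mathcal F_\lambda$ of Proposition \ref{F_irred} we use the Ramond twisted $L_{-5/4}(osp(1\vert 2))$--module $\mathcal E_\lambda = F^{1/2}\otimes \Pi^{1/2}_{-1}(\lambda)$ from Proposition \ref{ired-osp-2}(1), and in place of the untwisted $F$ we use the $\sigma_F$--twisted module $M_F^{tw}$. By Lemma \ref{tw-w1}, the product $\mathcal F_{(1)}(\lambda):=M_F^{tw}\otimes F^{1/2}\otimes \Pi^{1/2}_{-1}(\lambda)$ carries an (untwisted) $\mathcal W_1$--module structure through the embedding $\Phi$ of Theorem \ref{real-1}, since the Ramond twist on the $osp(1\vert 2)$ factor and the $\sigma_F$--twist on the Clifford factor cancel on the generators $G^\pm =2:\Psi^\pm x:, 2:\Psi^\pm y:$ and leave $J,T$ manifestly untwisted.

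First I would verify that $E^\lambda_1=\mathbbm 1^{tw}_F\otimes \mathbbm 1_{F^{1/2}}\otimes e^{-\mu+\lambda c}$ is a highest weight vector for $\mathcal W_1$, i.e.\ that $J(n), L(n), G^\pm(n)$ annihilate it for $n\geq 1$. Each $\mathcal W_1$--generator is a finite sum of normal-ordered products of $\alpha$, $h$, $\Psi^\pm$, $x$, $y$ and $L_{sug}$; after accounting for the shifted mode expansions on $M_F^{tw}$ and on the Ramond twisted $\mathcal E_\lambda$ (where the odd fields have integer Ramond modes), all positive modes of these constituents kill either $\mathbbm 1^{tw}_F$ or $\mathbbm 1_{F^{1/2}}\otimes e^{-\mu+\lambda c}$, reducing the check to a handful of zero-mode contributions.

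Next I would compute $(x_\lambda,y_\lambda)$ using $J=\tfrac{5}{3}\alpha-\tfrac{2}{3}h$ and $T=\omega_{sug}+\omega_F-\omega^\perp$. The key new ingredient compared with Lemma \ref{hw2} is the twisted zero mode of $\alpha$: from $Y^{tw}_F(\alpha,z)=Y(\Delta(\alpha/2,z)\alpha,z)=\alpha(z)+\tfrac12 z^{-1}$ one reads off $\alpha(0)\mathbbm 1^{tw}_F=\tfrac12\mathbbm 1^{tw}_F$, whereas the action of $h(0)$ on $e^{-\mu+\lambda c}$ remains $-k+2\lambda=-1+2\lambda$. This gives $x_\lambda=\tfrac56-\tfrac23(-1+2\lambda)$. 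A similar computation of $L(0)$ via $\omega_{sug},\omega_F,\omega^\perp$ (using $e^{\Delta_z}\omega_{F^{1/2}}=\omega_{F^{1/2}}+\tfrac{1}{16}z^{-2}\mathbbm 1$ and the $\tfrac{1}{16}$ shift from $M^{\pm}_{F^{1/2}}$ being replaced by $F^{1/2}$) yields $y_\lambda$ in closed form, and a direct algebraic check confirms $h_1(x_\lambda,y_\lambda)=-3x_\lambda^2+2x_\lambda+6x_\lambda+y_\lambda+3y_\lambda-2\equiv 0$ when $k=1$.

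Finally, since $\lambda\mapsto x_\lambda$ is affine and non-constant, its image is all of $\mathbb C$, so the family $\{(x_\lambda,y_\lambda):\lambda\in\mathbb C\}$ sweeps out the entire curve $h_1(x,y)=0$; hence every $L(x,y)\in T_{(1)}$ arises as the irreducible quotient of some $U_{(1)}(\lambda)=\mathcal W_1.E^\lambda_1$. I expect the main technical obstacle to be the careful bookkeeping of the combined $\sigma_F\otimes\rho$--twist on $M_F^{tw}\otimes\mathcal E_\lambda$ — in particular pinning down the Ramond Sugawara weight of the ground state $\mathbbm 1_{F^{1/2}}\otimes e^{-\mu+\lambda c}$ in $\mathcal E_\lambda$ and confirming the cancellation of twists on $G^\pm$ — after which the verification of the highest-weight relations and the identity $h_1(x_\lambda,y_\lambda)=0$ is routine.
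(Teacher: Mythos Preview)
Your approach is essentially identical to the paper's: both use the untwisted $\mathcal W_1$--module $\mathcal F_{(1)}(\lambda)=M_F^{tw}\otimes F^{1/2}\otimes \Pi^{1/2}_{-1}(\lambda)$, compute $\alpha(0)\mathbbm 1^{tw}_F=\tfrac12$ and $L(0)\mathbbm 1^{tw}_F=\tfrac18$ from $\Delta(\alpha/2,z)$, obtain $x_\lambda=\tfrac56-\tfrac23(-k+2\lambda)$ and $y_\lambda$, and verify $h_1(x_\lambda,y_\lambda)=0$. Your expression for $h_1$ is miscopied (for $k=1$ it is $-3x^2+5x+4y$, equivalently $y=\tfrac34 x^2-\tfrac54 x$), but this is a transcription slip rather than a gap in the argument.
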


\begin{proof}
Consider the $\sigma_F \otimes g$-twisted $F \otimes F^{1/2} \otimes \Pi^{1/2}(0)$-module $\mathcal{F}_{(1)}(\lambda):= M_F^{tw} \otimes F^{1/2} \otimes \Pi^{1/2}_{-1}(\lambda)$, $\lambda \in \mathbb{C}$. Then $\mathcal{F}_{(1)}(\lambda)$ is an untwisted $\mathcal W_1$-module. 

\smallskip

Let $Y^{tw}_F(\omega,z)= Y(\Delta(h,z)\omega,z)$ and set $h= \tfrac{\alpha}{2}$. We have  $$ \Delta(\frac{\alpha}{2}, z)\omega = \omega + \frac{\alpha}{2}z^{-1} + \frac{1}{2}\frac{\alpha}{2}(1)\frac{\alpha}{2}z^{-2}= \omega + \frac{\alpha}{2}z^{-1} + \frac{1}{8}z^{-2}\mathbbm{1}, $$ hence $L(0)\mathbbm{1}^{tw}_{F}= 1/8\mathbbm{1}^{tw}_{F}$.

Similarly, $$ \Delta(\frac{\alpha}{2}, z)\alpha = \alpha + \frac{1}{2}\alpha(1)\alpha(-1) z^{-1}= \alpha + \frac{1}{2}z^{-1}\mathbbm{1}, $$ hence $\alpha(0)\mathbbm{1}^{tw}_{F}= 1/2\mathbbm{1}^{tw}_{F}$.

It holds that 
\begin{align*}
    h(n)E^{\lambda}_{1} &= \delta_{n,0} (-k+ 2\lambda)E^{\lambda}_{1}, &\\
    L_{sug}(n)E^{\lambda}_{1} &= -\frac{5}{16}\delta_{n,0} E^{\lambda}_{1}, \quad  n \in {\Bbb Z}_{\ge 0}. &
\end{align*}
We have
\bea
    &J(0)E^{\lambda}_{1}  &=  \left(\frac{5}{6} - \frac{2}{3}(-k+ 2\lambda)\right) E^{\lambda}_{1},    \nonumber \\
   & L(0)E^{\lambda}_{1} &= \left(-\frac{5}{16}+ \frac{1}{8}+ \frac{1}{3}(\frac{1}{2} - (-k+ 2\lambda))^2 -\frac{5}{12}+  \frac{1}{3}(-k+ 2\lambda) \right) E^{\lambda}_{1}   \nonumber   \\
   & & = \frac{1}{48} (4 (-k + 2 \lambda) -5)  (4 (-k + 2 \lambda) +5)E^{\lambda}_{1} \nonumber 
\eea
Set \bea x:= x_{\lambda} &= &\frac{5}{6} -\frac{2}{3}(-k+ 2\lambda),   \label{weight-tw-1}\\  y:= y_{\lambda}&=& \frac{1}{48} (4 (-k + 2 \lambda) -5)  (4 (-k + 2 \lambda) +5)
\label{weight-tw-2}
 \eea
 so that  $$J(0)E^{\lambda}_{1}= x_{\lambda}E^{\lambda}_{1}, \quad L(0)E^{\lambda}_{1} =y_{\lambda} E^{\lambda}_{1}.$$ Since $y=\frac{3}{4}x^2 - \frac{5}{4}x$, the pair $(x_{\lambda},y_{\lambda}) \in \mathbb{C}^2$ satisfies the relation  $$h_1(x,y)= -3x^2+5x+4y=0.$$


\smallskip

Hence $\mathcal{W}_1$ has a family of highest weight modules $U_{(1)} (\lambda)$, $\lambda \in \mathbb{C}$,  with highest weights $ (x, y)$. In particular, their irreducible quotients $L(x, y)$ are also modules for $\mathcal{W}_1$.

\end{proof}

Using a twisted variant of Theorem   \ref{ired-general}  and Proposition  \ref{ired-osp-2} we get the following irreducibility result.
\begin{proposition} \label{dec-prva-treca}
Assume that $\lambda \notin {\Z} \cup (-\frac{1}{4} +  {\Z}). $  Then  $\mathcal E_{\lambda}^0 \otimes M_F ^{tw} $ is a completely reducible $\mathcal W_1 \otimes M_{h^{\perp}}(1)$--module:
\bea
\mathcal E_{\lambda}^0 \otimes  M_F ^{tw}  \cong  \bigoplus _{n \in {\Z} } \Psi^{-n} (L(x_{\lambda}, y_{\lambda})) \otimes  M_{h^{\perp}}(1, \Delta'+ n) \label{dec-lambda-tw}
\eea
where  $\Delta'= \frac{1}{2}+k-2\lambda $ and weights $(x_{\lambda}, y_{\lambda})$ are given by  (\ref{weight-tw-1})- (\ref{weight-tw-2}).
In particular, $U_{(1)} (\lambda)$ is an irreducible $\mathcal W_1$-module and it holds that $$ U_{(1)} (\lambda) = L (x_{\lambda} ,y_{\lambda}).$$
\end{proposition}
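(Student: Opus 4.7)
The plan is to run the same argument that established the decomposition of $\mathcal F_\lambda\otimes F$ in the previous proposition, but now in the Ramond--twisted sector using the twisted statement of Theorem~\ref{ired-general}. First I would invoke Proposition~\ref{ired-osp-2}(3) to conclude that under the hypothesis $\lambda \notin {\Z} \cup (-\tfrac14 + {\Z})$, the module $\mathcal E_\lambda^0$ is an irreducible Ramond--twisted $L_{-5/4}(osp(1\vert 2))$--module on which $h(0)$ acts semisimply. Tensoring with the $\sigma_F$--twisted Clifford module $M_F^{tw}$ combines the two twists in a way that matches $\sigma_{L_{-5/4}(osp(1|2))}\otimes\sigma_F$ with the trivial automorphism on the image of $\mathcal W_1$ inside $L_{-5/4}(osp(1\vert 2))\otimes F$, so $\mathcal E_\lambda^0 \otimes M_F^{tw}$ is an (untwisted) $\mathcal W_1$--module, in fact a $\mathcal W_1 \otimes M_{h^\perp}(1)$--module.

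Next I would apply the Ramond--twisted part of Theorem~\ref{ired-general} to obtain the decomposition
\[
\mathcal E_\lambda^0 \otimes M_F^{tw} = \bigoplus_{n\in{\Z}} \mathcal L_n, \qquad
\mathcal L_n = \{v : h^\perp(0)v = (n+\Delta')v\},
\]
where each $\mathcal L_n$ is an irreducible $\mathcal W_1 \otimes M_{h^\perp}(1)$--module. The shift $\Delta'$ is pinned down by computing $h^\perp(0)$ on the top vector $E^\lambda_1 = \mathbbm 1^{tw}_F \otimes \mathbbm 1_{F^{1/2}} \otimes e^{-\mu+\lambda c}$: the twisted vacuum $\mathbbm 1^{tw}_F$ is an $\alpha(0)$--eigenvector of eigenvalue $\tfrac12$ (as recorded in the preceding lemma), while $h(0)e^{-\mu+\lambda c} = (-k+2\lambda)e^{-\mu+\lambda c}$, so
\[
h^\perp(0) E^\lambda_1 = \bigl(\tfrac12 - (-k+2\lambda)\bigr) E^\lambda_1 = \Delta' E^\lambda_1,
\]
which places $E^\lambda_1 \in \mathcal L_0$. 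Since $E^\lambda_1$ is a $\mathcal W_1$--highest weight vector of weight $(x_\lambda, y_\lambda)$, irreducibility of $\mathcal L_0$ forces $\mathcal L_0 \cong L(x_\lambda,y_\lambda) \otimes M_{h^\perp}(1,\Delta')$. In particular, the submodule $U_{(1)}(\lambda)$ it generates must be irreducible and equal to $L(x_\lambda, y_\lambda)$.

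For general $n$, I would use the simple--current fusion established in the previous subsection: the modules $\Psi^{-n}(\mathcal W_1) \otimes M_{h^\perp}(1,n)$ are simple currents in the $\mathcal W_1 \otimes M_{h^\perp}(1)$--module category, and $h^\perp(0)$ is additive under fusion. Therefore
\[
\mathcal L_n \cong \bigl(\Psi^{-n}(\mathcal W_1) \otimes M_{h^\perp}(1,n)\bigr) \times \bigl(L(x_\lambda,y_\lambda) \otimes M_{h^\perp}(1,\Delta')\bigr)
\cong \Psi^{-n}(L(x_\lambda,y_\lambda)) \otimes M_{h^\perp}(1,\Delta'+n),
\]
which yields the claimed decomposition (\ref{dec-lambda-tw}).

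The main obstacle I expect is the bookkeeping for the twist: formulating and applying the Ramond--twisted analogue of Theorem~\ref{ired-general} requires verifying that the $\sigma_F$--twist on the $F$ factor precisely cancels the Ramond--twist on the $osp(1\vert 2)$ factor when one restricts to the subalgebra $\mathcal W_1 \subset L_{-5/4}(osp(1\vert 2))\otimes F$, so that the resulting $\mathcal W_1$--module is genuinely untwisted and the $h^\perp(0)$--grading is integral. Once this compatibility is in place, the identification of $\mathcal L_0$ via the explicit top vector and the simple--current propagation to arbitrary $n$ are routine, and the irreducibility of $U_{(1)}(\lambda)$ is an immediate corollary.
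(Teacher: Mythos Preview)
Your proposal is correct and follows essentially the same approach as the paper, which simply states that the result follows from the twisted variant of Theorem~\ref{ired-general} together with Proposition~\ref{ired-osp-2}, mirroring the proof of the untwisted decomposition of $\mathcal F_\lambda\otimes F$. In fact you have supplied more detail than the paper does, including the explicit computation of $\Delta'$ via $h^\perp(0)E^\lambda_1$ and the simple-current identification of each $\mathcal L_n$; the one subtlety you flag---that the Ramond twist on the $osp(1\vert 2)$ factor combines with the $\sigma_F$--twist on $F$ to give an untwisted $\mathcal W_1$--module---is indeed implicit in the paper's framework (cf.\ Lemma~\ref{tw-w1}) and is the correct point to check.
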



\begin{remark}
Theorem \ref{modules-general} implies that there exists another family of irreducible highest weight $\mathcal{W}_1$-modules $L(x,y)$, for which it holds that $h_3(x,y) = 0$. Indeed, these modules can be obtained from  $T_{(1)}$ using the spectral flow automorphism $\psi^{-1}$ as $$ T_{(3)} :=   \{ \psi^{-1} (L(\hat{x}_1, \hat{y}_1)) \ \vert \ (x,y) \in {\C}^2,  \  h_1(x,y) = 0\}. $$ From Lemma \ref{i+j} it easily follows that  $ T_{(3)} = \{ L(x, y) \ \vert \ (x,y) \in {\C}^2,  \  h_3(x,y) = 0\} .$

These modules also appear in the decomoposition in Proposition \ref{dec-prva-treca}.
\end{remark}

\section*{Acknowledgment}
 We would like to thank D. Ridout for useful  discussions.
The authors  are  partially supported   by the
QuantiXLie Centre of Excellence, a project coffinanced
by the Croatian Government and European Union
through the European Regional Development Fund - the
Competitiveness and Cohesion Operational Programme
(KK.01.1.1.01.0004).

\Addresses

\end{document}